\newcommand{\bH}{\boldsymbol{\mathcal{H}}}
\newcommand{\bD}{\boldsymbol{\mathcal{D}}}
\newcommand{\Image}{\operatorname{Im}}
\renewcommand{\ker}{\operatorname{Ker}}
\newcommand{\Fix}{\operatorname{Fix}}
\newcommand{\dom}{\operatorname{dom}}
\newcommand{\zer}{\operatorname{zer}}
\crefname{hypothesis}{Hypothesis}{Hypotheses}
\title{Degenerate Preconditioned Proximal Point Algorithms\thanks{%
    \today 
\funding{This work has received funding from the European Union’s Framework Programme for Research and Innovation Horizon 2020 (2014-2020) under the Marie Skłodowska-Curie Grant Agreement No. 861137. \raisebox{-0.25ex}{\includegraphics[height=2ex]{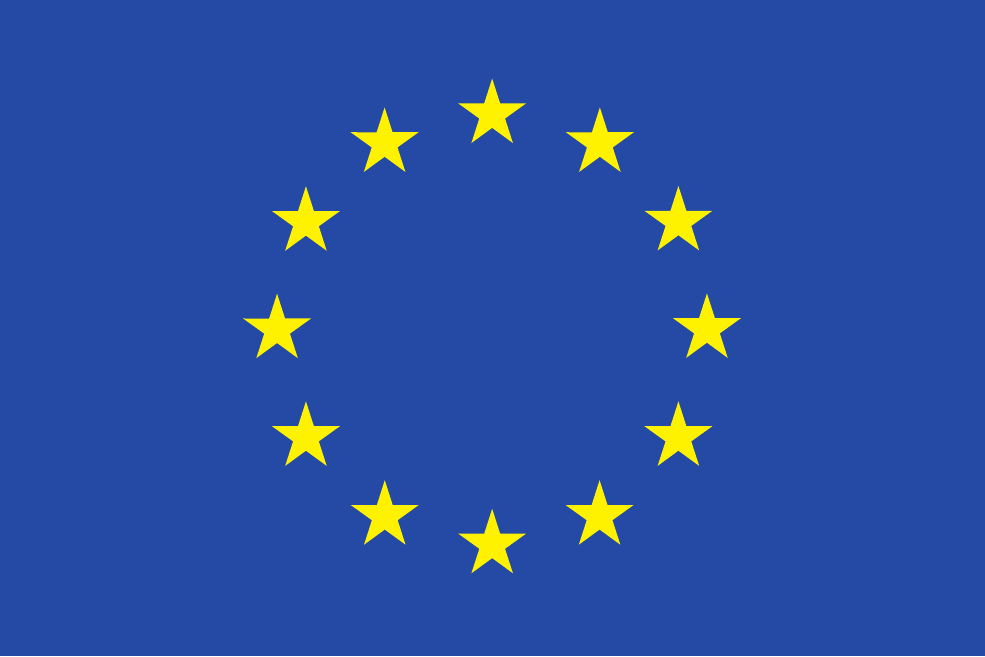}}}\\The Institute of Mathematics and Scientific Computing at the University of Graz, with which K.B.~and E.C.~are affiliated, is a member of NAWI Graz (\texttt{https://nawigraz.at/en}).}}
\author{Kristian Bredies\thanks{Institute of Mathematics and Scientific Computing, University of Graz, Graz, Austria, email:kristian.bredies@uni-graz.at, enis.chenchene@uni-graz.at}
\and Enis Chenchene\footnotemark[2]
\and Dirk Lorenz\thanks{Institute of Analysis and Algebra, TU Braunschweig, email: d.lorenz@tu-braunschweig.de, e.naldi@tu-braunschweig.de}
\and Emanuele Naldi\footnotemark[3]}
\begin{document}

\maketitle

\begin{abstract}
  In this paper we describe a systematic procedure to analyze the convergence of degenerate preconditioned proximal point algorithms. We establish weak convergence results under mild assumptions that can be easily employed in the context of splitting methods for monotone inclusion and convex minimization problems. Moreover, we show that the degeneracy of the preconditioner allows for a reduction of the variables involved in the iteration updates. We show the strength of the proposed framework in the context of splitting algorithms, providing new simplified proofs of convergence and highlighting the link between existing schemes, such as Chambolle-Pock, Forward Douglas-Rachford and Peaceman-Rachford, that we study from a preconditioned proximal point perspective. The proposed framework allows to devise new flexible schemes and provides new ways to generalize existing splitting schemes to the case of the sum of many terms. As an example, we present a new sequential generalization of Forward Douglas-Rachford along with numerical experiments that demonstrate its interest in the context of nonsmooth convex optimization.
\end{abstract}

\begin{keywords}
  Splitting algorithms, preconditioned proximal point iteration, generalized resolvents, Douglas-Rachford, sequential Forward Douglas-Rachford, non-smooth optimization
\end{keywords}

\begin{AMS}
  47H05, 47H09, 47N10, 90C25
\end{AMS}

\section{Introduction}
\vspace{-0.05cm}
In this paper we study preconditioned proximal point methods where we allow the preconditioner to be degenerate, i.e.,~only positive semi-definite with a possibly large kernel. Our main motivation to do so is to apply the results to various splitting methods for optimization problems and monotone inclusions. We will show that these preconditioners still lead to convergent methods under mild assumptions and, moreover, that this allows to reduce the number of iteration variables in the context of splitting methods. The inclusion problem we consider is
\begin{equation}\label{monotoneinclusionIntro}
\text{find} \ u \in \bH \ \text{such that:} \ 0 \in \mathcal{A}u
\end{equation}
where $\bH$ is a real Hilbert space and $\mathcal{A}$ is a (in general multivalued) operator from $\bH$ into itself~\cite{BCombettes}. The set of solutions to \cref{monotoneinclusionIntro} is referred as \textit{zeros} of $\mathcal{A}$ and denoted by $\zer \mathcal{A}$. By the Minty surjectivity theorem \cite{MintyThm}, the resolvent $J_{\mathcal{A}}:=(I+\mathcal{A})^{-1}$ is a well-defined function from $\bH$ into itself as soon as $\mathcal{A}$ is maximal monotone; moreover it is firmly non-expansive and its fixed points correspond exactly to solutions of \cref{monotoneinclusionIntro} (cf.~\cite[Section 23]{BCombettes}). Hence, by the Krasnoselskii-Mann convergence theorem, the proximal point iteration $u^{k+1} = J_{\mathcal{A}}u^{k}$ weakly converges to a solution of \cref{monotoneinclusionIntro}. However, the evaluation of the resolvent is, in general, not much simpler that solving the inclusion problem itself. This changes, if one considers a \emph{preconditioner}, i.e.,~a linear, bounded, self-adjoint and positive semi-definite operator $\mathcal{M}:\bH\to\bH$. We have the equivalence of \cref{monotoneinclusionIntro} to
\begin{equation}\label{FixedPointInclusionIntro}
\text{find} \ u \in \bH \ \text{such that:} \ u \in \left(\mathcal{M}+\mathcal{A}\right)^{-1}\mathcal{M}u.
\end{equation}
Assuming that the operator $\mathcal{T}:=\left(\mathcal{M}+\mathcal{A}\right)^{-1}\mathcal{M}$ has full domain and is single-valued, one can turn the fixed-point inclusion into the preconditioned proximal point (\textsf{PPP}) iteration
\begin{equation}\label{eq:PPPsimple}
u^0 \in \bH, \quad u^{k+1}=\mathcal{T}u^k = \left( \mathcal{M}+\mathcal{A}\right)^{-1}\mathcal{M}u^k,
\end{equation}
and proper choices of $\mathcal{M}$ will allow for efficient evaluation of $\mathcal{T}$.

In general, $\mathcal{T}$ is an instance of the so-called \emph{warped} resolvents, where $\mathcal{M}$ is replaced by a general non-linear operator \cite{WarpedIterations}. Other particular instances of warped resolvents can be found under the name of $F$-resolvents \cite{bauschke2008general}, or $D$-resolvents \cite{Dresolv1, BregMonOptAlg}. In our framework, $\mathcal{M}$ will be always referred as \textit{preconditioner}.
In case of $\mathcal{M}=I$, we retrieve the usual proximal point iteration \cite{ProxPointClassicMartinet, Rockafellar1976MonotoneOA}, and if $\mathcal{M}$ defines an equivalent metric (i.e.,~$\mathcal{M}$ is strongly positive definite) then we simply get a proximal point iteration with respect to the operator $\mathcal{M}^{-1}\mathcal{A}$ in the Hilbert space endowed with the metric induced by $\mathcal{M}$. In this case, the convergence analysis of \cref{eq:PPPsimple} is essentially the same as for the case $\mathcal{M}=I$. 

In this work we consider the case where the preconditioner $\mathcal{M}$ is only positive semi-definite, and hence, $\mathcal{M}$ is associated with a semi inner-product $\langle u, v \rangle_{\mathcal{M}}:=\langle \mathcal{M}u, v \rangle$ (defined for all $u$ and $v$ in $\bH$), as well as a continuous seminorm $\| u\|_\mathcal{M} := \sqrt{\langle u, u \rangle_{\mathcal{M}}}$. Such preconditioner is typically involved in the context of splitting methods, that is, for inclusions of the form 
\begin{equation}\label{sum}
\textit{find} \ x \in \mathcal{H} \ \textit{such that}: \    0 \in (A_0+\dots+A_N)x,
\end{equation}
that we aim to solve using only the resolvents of each maximal monotone operator $A_i$ combined with simple algebraic operations. If $N=1$, one of the most popular splitting algorithm to tackle \cref{sum} is the celebrated Douglas-Rachford splitting (\textsf{DRS}) method \cite{douglas1956on, EcksteinBertsekas}, which, as already remarked in \cite{BrediesDRS}, admits a genuinely degenerate \textsf{PPP} representation with respect to the operators
\begin{equation}\label{DouglasRachfordSettingIntro}
\quad \mathcal{A}:= 
\begin{bmatrix}
\sigma A & I \\
-I & \left(\sigma B\right)^{-1}
\end{bmatrix},
\quad
\mathcal{M}:= 
\begin{bmatrix}
I & -I \\
-I & I\\
\end{bmatrix},
\end{equation}
with $\sigma > 0$, $A=A_0, \ B=A_1$. Let $u:=(x,y)\in \bH:= \mathcal{H}^{2}$, the inclusion problem $0 \in \mathcal{A}u$ in the larger space $\bH$ is indeed equivalent to $0 \in (A+B)x$, in fact $0 \in (A+B)x$ if and only if there exists $y \in \mathcal{H}$ such that $0 \in \sigma Ax+y$ and $y \in \sigma Bx$. The preconditioner has been chosen to be positive semi-definite and such that the \textsf{PPP} iterations can be computed explicitly. Indeed, $\mathcal{A}+\mathcal{M}$ has a lower triangular structure and we easily get
\begin{equation}\label{intro:DRSasPPP}
  \begin{cases}
    x^{k+1} = J_{\sigma A}(x^k-y^k), \\
    y^{k+1} = J_{(\sigma B)^{-1}}\bigl(2x^{k+1}-(x^k-y^k)\bigr).
  \end{cases}
\end{equation}
We can already notice that we only need the information contained in $x^k-y^k$, which leads to the substitution $w^k := x^k-y^k$ resulting exactly in the \textsf{DRS} iteration
\begin{equation}\label{intro:DRSasRPPP}
w^{k+1}=w^k+J_{\sigma B}(2  J_{\sigma A}w^k-w^k)- J_{\sigma A}w^k.
\end{equation}

The sequence $\{w^k\}_k$ can be shown to converge weakly to a point $w^*$ such that $J_{\sigma A}(w^*)$ is a solution of $0 \in (A+B)x$, provided such a point exists \cite{EcksteinBertsekas}. Notice, moreover, that passing from \cref{intro:DRSasPPP} to \cref{intro:DRSasRPPP} we reduced the variables from two to one. We show in this paper that this is not accidental. 

The convergence of the sequence $u^k=(x^k,y^k)$ is more subtle because in this case we are really dealing with a strongly degenerate preconditioner (as the preconditioner $\mathcal{M}$ has a large kernel), and convergence can not be deduced from the convergence of the proximal point method~\cite{BrediesDRS}. Instead, an \textit{ad hoc} proof of convergence that exploits the particular structure of the problem is given for instance in \cite{SVAITER2019291}.

\subsection{Motivation}
\label{sec:motivation}
Preconditioned proximal point iterations are attracting an increasing interest due to their fruitful applications in splitting algorithms for convex minimization and monotone inclusion problems, but the present lack of a general and sufficiently simple procedure to tackle the convergence and stability analysis in the degenerate case typically leads to involved and long proofs. Indeed, no definitive treatments can be found in the literature.

This motivates the development of a unifying theory that can give a new perspective on existing splitting schemes, simplifying the analysis of convergence, and paving the way for new algorithms that can also allow tackling objective functions composed by many terms.

\subsection{Related work}\label{sec:related-work}
The proximal point algorithm has an old history that dates back to the seminal works of Martinet and Rockafellar \cite{Rockafellar1976MonotoneOA, ProxPointClassicMartinet}. Soon after, the work of Cohen \cite{Cohen1980} introduces a closely related algorithmic approach referred as \textit{auxiliary problem principle}, which shows the seminal idea to replace the usual norm with generalized, even non-linear, variants. More recently, in the context of \textsf{PPP} as defined in \cref{eq:PPPsimple} we mention the work of He and Yuan \cite{HeYuan}, where the authors firstly propose the analysis of the Primal-Dual Hybrid Gradient method, also known as Chambolle-Pock (\textsf{CP}) algorithm \cite{Chambolle2011}, from a \textsf{PPP} standpoint, but a non-degenerate preconditioner is considered. A more general instance can be found in \cite{giselsson2021nonlinear} for solving the three-operator problem (problem \cref{sum} with $N=2$), but the analysis relies on non-degenerate preconditioners. Other instances of non-degenerate, but even non-linear, preconditioners lead to the notion of $D$-resolvent \cite{Dresolv1,bauschke2008general}, where the square distance in the usual definition of proximal point is replaced by a Bregman divergence generated by a strictly convex function resulting in a \textsf{PPP}-like iteration where $\mathcal{M}$ is replaced by the gradient of the generating function. More general resolvents can be found under the name of warped resolvents \cite{WarpedIterations}, where the preconditioner is replaced by a non-linear operator. %

For the degenerate case we mention the general treatment proposed by Valkonen in \cite{Valkonen2020} where the analysis of convergence includes \textsf{PPP} iterations, but do not go beyond the weak convergence of \textsf{PPP} in the $\mathcal{M}$ seminorm. Similar results can be found in the recent work by Bredies and Sun on the preconditioned variants of \textsf{DRS} and the alternating directions method of multipliers (\textsf{ADMM}), where authors also recover the convergence in the original norm, but with \textit{ad hoc} proofs of convergence \cite{BrediesDRS, Bredies2017APP}. Also, the so-called asymmetric forward–backward–adjoint splitting proposed in \cite{LafatPuyaPP} can be thought as a \textsf{PPP} instance with an additional forward term, but the degenerate-case analysis (with no forward term) is performed, with an arguably involved proof, only in finite dimensions. In \cite[Section 5]{Condat2020ProximalSA}, the authors propose a generalized \textsf{CP} algorithm presenting the scheme directly as a \textsf{PPP} method. The convergence is established in Theorem 5.1 in the non-degenerate case relying on the non-degenerate \textsf{PPP} formulation, and in a separate result (Theorem 5.2) for the degenerate case, after noticing that it is an instance of \textsf{DRS} on a modified space, which allows relying on existing proofs of convergence for \textsf{DRS}. Currently, particular cases of \textsf{PPP} are dealt with on a case-by-case basis. To the best of the authors' knowledge, a unified treatment does not exist in the literature.

\subsection{Contribution}\label{sec:contributio}

The paper is organized as follows. In \cref{sec:abstractPPP} we present the analysis of the degenerate case of the preconditioned proximal point method providing a systematic scheme to establish convergence (with and without rates) that only involves mild assumptions. Moreover, we also show how the degeneracy of the preconditioner allows for a reduction of the number of variables involved in the iteration updates. In \cref{sec:applications} we focus on the application to splitting algorithms. In this context, the main contribution that the proposed analysis provides is two-fold. First, it allows to simplify, in a unifying fashion, the convergence analysis for a variety of splitting methods, such as Chambolle-Pock in the limiting case, forward \textsf{DRS} (also called Davis-Yin method), and Peaceman-Rachford, that we study from a \textsf{PPP} perspective. Second, it allows to derive new splitting algorithms for the $(N+1)$-operator problem \cref{sum} that do not rely on the so-called \textit{product space trick} (cf. \cref{sec:ParallelFDR}) such as a \textit{sequential generalization of forward \textsf{DRS}}. In \cref{sec:Experiment} we present numerical experiments.

\section{Abstract degenerate preconditioned proximal point}
\label{sec:abstractPPP}
Let $\bH$ be a real Hilbert space, $\mathcal{A}:\bH\to 2^{\bH}$ be a (multivalued) operator (that through the rest of this paper we often identify with its graph in $\bH\times \bH$) and let $\mathcal{M}:\bH\to\bH$ be a linear bounded operator. Recall from \cref{FixedPointInclusionIntro} that \cref{monotoneinclusionIntro} could be formulated as a fixed point equation $u \in \mathcal{T}u$, with $\mathcal{T}:=\left(\mathcal{M}+\mathcal{A}\right)^{-1}\mathcal{M}$. Let us fix the class of preconditioners that we consider in this paper.
\begin{defi}[Admissible preconditioner]\label{def:admissiblePrec} An admissible preconditioner for the operator $\mathcal{A}:\bH\to 2^{\bH}$ is a linear, bounded, self-adjoint and positive semi-definite operator $\mathcal{M}:\bH\to \bH$ such that
	\begin{equation}\label{HpAdmissiblePrec}
	\mathcal{T} \ \text{is single-valued and has full domain}.
	\end{equation}
\end{defi}
\begin{remark}
Contrarily to the non-degenerate case, assuming for instance that $\mathcal{A}$ is maximal monotone does not necessarily lead to \cref{HpAdmissiblePrec}. As a non-trivial counterexample consider
\begin{equation*}
\bH=\mathbb{R}^2, \ \mathcal{A}=\partial f, \ \text{where} \ f(x,y)=\max\{e^y-x,0\} \ \text{and} \ \mathcal{M}(x,y)=(x,0),
\end{equation*}
It is easy to check that $\mathcal{T}$ is neither everywhere defined ($\mathcal{T}0=\emptyset)$ nor single-valued. For this reason, instead of imposing the maximal monotonicity of $\mathcal{A}$, we directly require \cref{HpAdmissiblePrec}, which, in the context of splitting methods, is a reasonable assumption. 
\end{remark}
Based on classical results of functional operator theory we can characterize an admissible preconditioner with the following fundamental decomposition. The proof is postponed to the appendix. 
\begin{prop}\label{CholeskyDecomposition}
	Let $\bH$ be a real Hilbert space, and $\mathcal{M}:\bH\to \bH$ be a linear, bounded, self-adjoint and positive semi-definite operator. Then there exists a bounded and injective operator $\mathcal{C}:\bD\to \bH$, for some real Hilbert space $\bD$, such that $\mathcal{M}=\mathcal{C}\mathcal{C}^*$. Moreover, if $\mathcal{M}$ has closed range, then $\mathcal{C}^*$ is onto.
\end{prop}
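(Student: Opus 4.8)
The plan is to produce the operator $\mathcal{C}$ as a square-root-type factorization of $\mathcal{M}$, obtained from the spectral calculus for bounded self-adjoint positive operators. Since $\mathcal{M}$ is linear, bounded, self-adjoint and positive semi-definite, the continuous functional calculus gives a well-defined bounded self-adjoint positive square root $\mathcal{M}^{1/2}:\bH\to\bH$ satisfying $\mathcal{M}^{1/2}\mathcal{M}^{1/2}=\mathcal{M}$ and $(\mathcal{M}^{1/2})^*=\mathcal{M}^{1/2}$. The natural first guess is therefore $\mathcal{C}=\mathcal{M}^{1/2}$ with $\bD=\bH$, for which $\mathcal{C}\mathcal{C}^*=\mathcal{M}^{1/2}(\mathcal{M}^{1/2})^*=\mathcal{M}$ holds immediately. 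The remaining obstacle is that $\mathcal{M}^{1/2}$ need not be injective: its kernel coincides with $\ker\mathcal{M}$, which may be nontrivial precisely in the degenerate case we care about. So the bulk of the argument is to pass from this square root to an injective factor.

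First I would record the kernel identity $\ker\mathcal{M}^{1/2}=\ker\mathcal{M}$, which follows from $\|\mathcal{M}^{1/2}u\|^2=\langle\mathcal{M}u,u\rangle$, so $\mathcal{M}^{1/2}u=0$ iff $\langle\mathcal{M}u,u\rangle=0$ iff (by positive semi-definiteness and Cauchy-Schwarz in the $\mathcal{M}$-seminorm) $\mathcal{M}u=0$. Then I would restrict attention to the orthogonal complement of the kernel. Define $\bD:=(\ker\mathcal{M})^{\perp}=\overline{\Image\,\mathcal{M}^{1/2}}$, a closed subspace of $\bH$ and hence itself a real Hilbert space, and let $\mathcal{C}:\bD\to\bH$ be the restriction of $\mathcal{M}^{1/2}$ to $\bD$. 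This $\mathcal{C}$ is bounded, and it is injective because we have quotiented out exactly the kernel. The key computation is to identify its adjoint: for $u\in\bH$ and $d\in\bD$ one has $\langle\mathcal{C}d,u\rangle=\langle\mathcal{M}^{1/2}d,u\rangle=\langle d,\mathcal{M}^{1/2}u\rangle=\langle d,P_{\bD}\mathcal{M}^{1/2}u\rangle$, where $P_{\bD}$ is the orthogonal projection onto $\bD$; but $\Image\,\mathcal{M}^{1/2}\subseteq\bD$, so $P_{\bD}\mathcal{M}^{1/2}=\mathcal{M}^{1/2}$ and hence $\mathcal{C}^*u=\mathcal{M}^{1/2}u$ viewed as an element of $\bD$. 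Consequently $\mathcal{C}\mathcal{C}^*u=\mathcal{M}^{1/2}\mathcal{M}^{1/2}u=\mathcal{M}u$ for all $u$, giving $\mathcal{M}=\mathcal{C}\mathcal{C}^*$ as required.

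For the final claim, suppose $\mathcal{M}$ has closed range. Then $\Image\,\mathcal{M}$ is closed, and I would argue that $\Image\,\mathcal{M}^{1/2}$ is closed as well and equals $\bD$. The cleanest route is to observe that $\Image\,\mathcal{M}\subseteq\Image\,\mathcal{M}^{1/2}\subseteq\overline{\Image\,\mathcal{M}^{1/2}}=\bD$, and that in general $\overline{\Image\,\mathcal{M}}=\overline{\Image\,\mathcal{M}^{1/2}}=(\ker\mathcal{M})^{\perp}$; when $\Image\,\mathcal{M}$ is closed this forces $\Image\,\mathcal{M}=\bD$, and since $\Image\,\mathcal{M}\subseteq\Image\,\mathcal{M}^{1/2}\subseteq\bD$ we conclude $\Image\,\mathcal{M}^{1/2}=\bD$. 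Because $\mathcal{C}^*u=\mathcal{M}^{1/2}u$ takes values in $\bD$ and its range is exactly $\Image\,\mathcal{M}^{1/2}=\bD$, the map $\mathcal{C}^*:\bH\to\bD$ is onto, which is the asserted surjectivity.

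The step I expect to be the main obstacle is the careful bookkeeping of the adjoint $\mathcal{C}^*$ once the codomain has been shrunk from $\bH$ to $\bD$: one must be precise that $\mathcal{C}$ maps into $\bH$ while its domain is the subspace $\bD$, so the adjoint lands in $\bD$ and the projection $P_{\bD}$ disappears only because $\Image\,\mathcal{M}^{1/2}\subseteq\bD$. A secondary subtlety is the closed-range claim, where one must be sure that the equivalence $\Image\,\mathcal{M}$ closed $\iff\Image\,\mathcal{M}^{1/2}$ closed genuinely holds; this can be justified through the identity $\overline{\Image\,\mathcal{M}}=\overline{\Image\,\mathcal{M}^{1/2}}$ together with the inclusions above, rather than by any delicate spectral estimate.
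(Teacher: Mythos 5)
Your construction is correct and is essentially the paper's own: both take the self-adjoint square root of $\mathcal{M}$, set $\bD$ equal to the closure of its range (which, as you note, is $(\ker\mathcal{M})^{\perp}$), and let $\mathcal{C}$ be the restriction of the square root to $\bD$; your identification $\mathcal{C}^*u=\mathcal{M}^{1/2}u\in\bD$, with the projection $P_{\bD}$ dropping out because $\Image\mathcal{M}^{1/2}\subseteq\bD$, is precisely the paper's computation $\mathcal{C}\mathcal{C}^*=\mathcal{E}\,i\,i^*\,\mathcal{E}=\mathcal{E}^2=\mathcal{M}$ with $i$ the inclusion of $\bD$ into $\bH$. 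The one place where you genuinely diverge is the surjectivity of $\mathcal{C}^*$ under the closed-range hypothesis. The paper argues by contradiction: it takes $y\in\bD\setminus\Image\mathcal{C}^*$ with $\mathcal{C}^*x^k\to y$, uses closedness of $\Image\mathcal{M}$ to produce $x$ with $\mathcal{C}y=\mathcal{M}x=\mathcal{C}\mathcal{C}^*x$, and concludes $y=\mathcal{C}^*x$ from injectivity of $\mathcal{C}$. You instead use the sandwich $\Image\mathcal{M}\subseteq\Image\mathcal{M}^{1/2}\subseteq\bD$ together with the identity $\overline{\Image\mathcal{M}}=\overline{\Image\mathcal{M}^{1/2}}=(\ker\mathcal{M})^{\perp}=\bD$, so that closedness of $\Image\mathcal{M}$ immediately forces $\Image\mathcal{M}^{1/2}=\bD$. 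Your version is somewhat cleaner: it needs no sequences and no contradiction, only the elementary inclusion $\mathcal{M}u=\mathcal{M}^{1/2}(\mathcal{M}^{1/2}u)$ and the equality of the two closures (which follows from $\ker\mathcal{M}^{1/2}=\ker\mathcal{M}$, a fact you correctly justify via $\|\mathcal{M}^{1/2}u\|^2=\langle\mathcal{M}u,u\rangle$); the paper's sequence argument, on the other hand, works directly at the level of $\mathcal{C}$ and $\mathcal{C}^*$ and never needs to compare the ranges of $\mathcal{M}$ and of its square root. Both arguments are complete and correct.
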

Now we state the algorithm we aim to analyze:
\paragraph{Degenerate preconditioned proximal point method} Let $\mathcal{A}:\bH \to 2^{\bH}$ be an operator, $\mathcal{M}:\bH\to \bH$ be an admissible preconditioner, and $\mathcal{T} = (\mathcal{M} + \mathcal{A})^{-1}\mathcal{M}$. Let $\{\lambda_k\}_k$ be a sequence in $[0,2]$ such that $\sum_{k \in \mathbb{N}} \lambda_k(2-\lambda_k)=+\infty$. Let 
\begin{equation}\label{eq:PPP}
u^0 \in \bH, \quad u^{k+1}=u^k+\lambda_k(\mathcal{T}u^k-u^k).
\end{equation}
In the degenerate case one may still consider $\mathcal{M}^{-1}$ as a multivalued operator and give a precise meaning to the composition $\mathcal{M}^{-1}\mathcal{A}$ as
\begin{equation*}
\mathcal{M}^{-1}\mathcal{A}u= \bigcup_{y \in \mathcal{A}u}\mathcal{M}^{-1}y = \big\{ z \in \bH \ | \ z \in \mathcal{M}^{-1}y, \ y \in \mathcal{A}u\big\}
\quad \text{for all} \ u \in \bH.
\end{equation*}
With this notation we have for all $(x,y)\in \bH^2$ that
\begin{equation*}
\begin{aligned}
\left(\mathcal{M}+\mathcal{A}\right)y \ni \mathcal{M}x &\iff \mathcal{M}(x-y)\in \mathcal{A}y \iff x-y \in \mathcal{M}^{-1}\mathcal{A}y\\
& \iff x \in \left(I+\mathcal{M}^{-1}\mathcal{A}\right)y \iff y \in \left(I+\mathcal{M}^{-1}\mathcal{A}\right)^{-1}x.
\end{aligned}
\end{equation*}
Therefore, the operator $\mathcal{T}$ coincides with the resolvent of $\mathcal{M}^{-1}\mathcal{A}$, i.e.,
\begin{equation}\label{Tdefinition}
\mathcal{T}=(\mathcal{M}+\mathcal{A})^{-1}\mathcal{M}=\left(I+\mathcal{M}^{-1}\mathcal{A}\right)^{-1},
\end{equation}
which is single-valued and with full domain by the admissibility of the preconditioner. Notice that if $\mathcal{M}$ is singular, say not surjective, the behaviour of $\mathcal{A}$ outside the image of $\mathcal{M}$ is somehow neglected by $\mathcal{M}^{-1}\mathcal{A}$. Indeed, we have the following list of equivalences
\begin{equation}\label{M-1AeA}
    \begin{aligned}
        (x,y)\in \mathcal{M}^{-1}\mathcal{A} \ \iff \ (x,\mathcal{M}y) \in \mathcal{A} \ &\iff \ (x, \mathcal{M}y)\in \mathcal{A}\cap (\bH\times \Image\mathcal{M})  \\
        &\iff \ (x,y) \in \mathcal{M}^{-1}(\mathcal{A}\cap (\bH\times \Image\mathcal{M})).
    \end{aligned}
\end{equation}
This suggests that standard demands on the structure of $\mathcal{A}$, e.g.,~the maximal monotonicity, can be relaxed without affecting the well-posedness of $\mathcal{T}$. The following notion will be crucial.
 \begin{defi}[$\mathcal{M}$-monotonicity] Let $\mathcal{M}:\bH\to\bH$ be a bounded linear positive semi-definite operator then $\mathcal{B}:\bH\to 2^{\bH}$ is $\mathcal{M}$-monotone if
        \begin{equation*}
        \langle v-v',u-u'\rangle_{\mathcal{M}}\geq 0 \quad \text{for all} \ (u,v),(u',v')\in \mathcal{B}.
        \end{equation*}    
    \end{defi}
As for the non-degenerate case, we will largely rely on the $\mathcal{M}$-monotonicity of $\mathcal{M}^{-1}\mathcal{A}$, which in terms of $\mathcal{A}$ can be explicitly characterized with the following.

  \begin{prop}\label{prop:MMaximalMonotonicityProp}
    Let $\mathcal{A}:\bH\to2^{\bH}$, and $\mathcal{M}:\bH\to\bH$ be a linear, bounded, self-adjoint and positive semi-definite operator. Then we have that $\mathcal{M}^{-1}\mathcal{A}$  is $\mathcal{M}$-monotone if and only if $\mathcal{A}\cap (\bH\times \Image\mathcal{M})$ is monotone.
  \end{prop}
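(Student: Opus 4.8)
The plan is to translate both conditions into the very same inequality by means of the equivalence already recorded in \cref{M-1AeA}, namely $(u,v)\in\mathcal{M}^{-1}\mathcal{A}$ if and only if $(u,\mathcal{M}v)\in\mathcal{A}$, and then to observe that passing from $\mathcal{A}$ to $\mathcal{A}\cap(\bH\times\Image\mathcal{M})$ is precisely what makes the correspondence between pairs work in both directions. The central elementary observation is that the $\mathcal{M}$-semi-inner product only sees its argument through $\mathcal{M}$: for any $u,u',v,v'$ one has $\langle v-v',u-u'\rangle_{\mathcal{M}}=\langle\mathcal{M}(v-v'),u-u'\rangle=\langle\mathcal{M}v-\mathcal{M}v',u-u'\rangle$, so the value depends only on $\mathcal{M}v$ and $\mathcal{M}v'$ and not on the chosen representatives.

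For the implication that monotonicity of $\mathcal{A}\cap(\bH\times\Image\mathcal{M})$ forces $\mathcal{M}$-monotonicity of $\mathcal{M}^{-1}\mathcal{A}$, I would start from two pairs $(u,v),(u',v')\in\mathcal{M}^{-1}\mathcal{A}$. By \cref{M-1AeA} the pairs $(u,\mathcal{M}v)$ and $(u',\mathcal{M}v')$ lie in $\mathcal{A}$, and since $\mathcal{M}v,\mathcal{M}v'\in\Image\mathcal{M}$ they in fact lie in $\mathcal{A}\cap(\bH\times\Image\mathcal{M})$. Monotonicity of the latter then gives $\langle\mathcal{M}v-\mathcal{M}v',u-u'\rangle\geq0$, which by the observation above is exactly $\langle v-v',u-u'\rangle_{\mathcal{M}}\geq0$.

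For the converse I would take two pairs $(u,w),(u',w')\in\mathcal{A}\cap(\bH\times\Image\mathcal{M})$. Because $w,w'\in\Image\mathcal{M}$, I can select $v,v'$ with $\mathcal{M}v=w$ and $\mathcal{M}v'=w'$; then $(u,\mathcal{M}v)=(u,w)\in\mathcal{A}$ yields $(u,v)\in\mathcal{M}^{-1}\mathcal{A}$ via \cref{M-1AeA}, and likewise $(u',v')\in\mathcal{M}^{-1}\mathcal{A}$. Applying $\mathcal{M}$-monotonicity to these two pairs and using $\langle v-v',u-u'\rangle_{\mathcal{M}}=\langle w-w',u-u'\rangle$ delivers the desired monotonicity of $\mathcal{A}\cap(\bH\times\Image\mathcal{M})$.

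The argument is essentially bookkeeping, so there is no serious obstacle; the only point requiring care is the role of the restriction to $\bH\times\Image\mathcal{M}$. Without it the converse would fail, since a preimage $v$ of $w$ under $\mathcal{M}$ need not exist, and in the forward direction the behaviour of $\mathcal{A}$ off $\Image\mathcal{M}$ is invisible to $\mathcal{M}^{-1}\mathcal{A}$ (as already remarked around \cref{M-1AeA}). This is exactly why the clean equivalence holds with $\mathcal{A}\cap(\bH\times\Image\mathcal{M})$ rather than with $\mathcal{A}$ itself, and no independence-of-representative issue arises because the semi-inner product factors through $\mathcal{M}$.
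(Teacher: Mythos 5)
Your proof is correct and follows essentially the same route as the paper's: both directions amount to the observation that pairs in $\mathcal{M}^{-1}\mathcal{A}$ correspond to pairs in $\mathcal{A}\cap(\bH\times\Image\mathcal{M})$ via $v\mapsto\mathcal{M}v$ (resp.\ choosing a preimage), together with the identity $\langle v-v',u-u'\rangle_{\mathcal{M}}=\langle\mathcal{M}v-\mathcal{M}v',u-u'\rangle$. Your explicit appeal to \cref{M-1AeA} and the closing remark on why the intersection with $\bH\times\Image\mathcal{M}$ is needed are harmless additions to what is otherwise the paper's own bookkeeping argument.
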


The proof is postponed to the appendix. 

\subsection{On the convergence of \textsf{PPP}}
In this section we provide a proof of the weak convergence of \textsf{PPP} according to \cref{eq:PPP} under the additional assumption that $(\mathcal{M}+\mathcal{A})^{-1}$ is Lipschitz,
a condition often satisfied in the context of splitting algorithms (cf. \cref{sec:applications}).

It has already been observed (see, e.g., \cite{BrediesDRS, WarpedIterations}) that some important notions can be easily generalized to the degenerate case, such as the firm non-expansiveness.

\begin{lemma}\label{lem:Mfne}
	Let $\mathcal{A}:\bH\to 2^{\bH}$ be an operator and $\mathcal{M}$ an admissible preconditioner such that $\mathcal{M}^{-1}\mathcal{A}$ is $\mathcal{M}$-monotone. Then $\mathcal{T}$ is $\mathcal{M}$-firmly non-expansive, i.e.,
	\begin{equation}\label{MFirmlyNonExpansive}
	\| \mathcal{T}u_1-\mathcal{T}u_2\|_\mathcal{M}^2+\|(I-\mathcal{T})u_1-(I-\mathcal{T})u_2 \|^2_\mathcal{M}\leq \|u_1-u_2\|^2_\mathcal{M}\quad \text{for all} \ u_1,u_2\in \bH.
	\end{equation}
\end{lemma}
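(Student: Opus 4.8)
The plan is to translate the $\mathcal{M}$-firm non-expansiveness into the classical firm non-expansiveness of a resolvent, which we already know holds, and to carry everything through the semi-inner-product $\langle\cdot,\cdot\rangle_\mathcal{M}$. The starting observation is the identity $\mathcal{T}=(I+\mathcal{M}^{-1}\mathcal{A})^{-1}$ established in \cref{Tdefinition}: since $\mathcal{T}$ is the resolvent of $\mathcal{M}^{-1}\mathcal{A}$ and this operator is $\mathcal{M}$-monotone by hypothesis, I expect the desired inequality to follow by mimicking the usual resolvent argument, but with $\langle\cdot,\cdot\rangle_\mathcal{M}$ playing the role of the inner product.

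Concretely, for $i=1,2$ set $p_i:=\mathcal{T}u_i$, so that $p_i=(I+\mathcal{M}^{-1}\mathcal{A})^{-1}u_i$, which means $u_i-p_i\in\mathcal{M}^{-1}\mathcal{A}p_i$. Thus the two pairs $(p_1,u_1-p_1)$ and $(p_2,u_2-p_2)$ both lie in the graph of $\mathcal{M}^{-1}\mathcal{A}$. Applying the definition of $\mathcal{M}$-monotonicity to these pairs gives
\begin{equation*}
\langle (u_1-p_1)-(u_2-p_2),\, p_1-p_2\rangle_\mathcal{M}\geq 0,
\end{equation*}
i.e. $\langle (I-\mathcal{T})u_1-(I-\mathcal{T})u_2,\, \mathcal{T}u_1-\mathcal{T}u_2\rangle_\mathcal{M}\geq 0$. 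This is precisely the $\mathcal{M}$-analogue of the inequality that characterizes firm non-expansiveness, and the rest is algebra.

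To finish, I would introduce the shorthand $a:=\mathcal{T}u_1-\mathcal{T}u_2$ and $b:=(I-\mathcal{T})u_1-(I-\mathcal{T})u_2$, so that $a+b=u_1-u_2$. Since $\langle\cdot,\cdot\rangle_\mathcal{M}$ is a genuine symmetric bilinear form (because $\mathcal{M}$ is self-adjoint and positive semi-definite), I can expand
\begin{equation*}
\|u_1-u_2\|_\mathcal{M}^2=\|a+b\|_\mathcal{M}^2=\|a\|_\mathcal{M}^2+2\langle a,b\rangle_\mathcal{M}+\|b\|_\mathcal{M}^2,
\end{equation*}
and the monotonicity inequality above says exactly $\langle a,b\rangle_\mathcal{M}\geq 0$. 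Dropping that nonnegative cross term yields $\|a\|_\mathcal{M}^2+\|b\|_\mathcal{M}^2\leq\|u_1-u_2\|_\mathcal{M}^2$, which is \cref{MFirmlyNonExpansive}.

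The one point that requires care, rather than being a deep obstacle, is justifying that the ordinary Cauchy–Schwarz-type expansion is valid for the seminorm $\|\cdot\|_\mathcal{M}$: here it is, because $\langle u,v\rangle_\mathcal{M}=\langle\mathcal{M}u,v\rangle$ is symmetric and bilinear and $\|\cdot\|_\mathcal{M}$ is the associated seminorm, so the polarization/expansion identity holds verbatim without needing positive definiteness. I would also note in passing that $\mathcal{M}$-monotonicity of $\mathcal{M}^{-1}\mathcal{A}$ is applied to points in its graph, and by \cref{prop:MMaximalMonotonicityProp} this reduces to ordinary monotonicity of $\mathcal{A}\cap(\bH\times\Image\mathcal{M})$, so no additional structural assumption on $\mathcal{A}$ is needed beyond what is stated. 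The admissibility of $\mathcal{M}$ guarantees that $\mathcal{T}u_1$ and $\mathcal{T}u_2$ are well-defined single points, so the manipulations are legitimate.
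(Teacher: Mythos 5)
Your proposal is correct and follows essentially the same route as the paper's proof (which is deferred to the cited reference, with the observation that only $\mathcal{M}$-monotonicity of $\mathcal{M}^{-1}\mathcal{A}$ is needed): using \cref{Tdefinition} to view $\mathcal{T}$ as the resolvent of $\mathcal{M}^{-1}\mathcal{A}$, extracting the graph pairs $(\mathcal{T}u_i,\,u_i-\mathcal{T}u_i)$, applying $\mathcal{M}$-monotonicity to obtain a nonnegative cross term, and expanding the $\mathcal{M}$-seminorm by bilinearity. No gap remains; the admissibility of $\mathcal{M}$ indeed justifies that $\mathcal{T}u_1,\mathcal{T}u_2$ are well-defined single points.
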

For a proof see \cite[Lemma 2.5]{BrediesDRS}. There, monotonicity of $\mathcal{A}$ is assumed, but exactly the same proof holds even if we only assume the monotonicity of $\mathcal{A}\cap (\bH\times \Image\mathcal{M})$. 

\begin{remark}\label{remark:R}
	As in the non-degenerate case, one can show that an operator $\mathcal{T}$ is $\mathcal{M}$-firmly non-expansive if and only if it is $\mathcal{M}$-$(1/2)$-averaged, that is, $\mathcal{R}:=2\mathcal{T}-I$ is $\mathcal{M}$-nonexpansive, i.e.,
    \[\|\mathcal{R}u_1-\mathcal{R}u_2\|_\mathcal{M}\leq \|u_1-u_2\|_\mathcal{M} \quad \text{for all} \ u_1,u_2\in\bH.\]
	This follows in an obvious way from the parallelogram identity w.r.t.~$\mathcal{M}$, since for any $u, u' \in \bH$,
	\begin{align*}
	\|&\mathcal{R}u_1-\mathcal{R}u_2\|_\mathcal{M}^2 = \| \mathcal{T}u_1-u_1-(\mathcal{T}u_2-u_2)+\mathcal{T}u_1-\mathcal{T}u_2\|^2_\mathcal{M}\\
	&=2\|(I-\mathcal{T})u_1-(I-\mathcal{T})u_2\|_\mathcal{M}^2+2\|\mathcal{T}u_1-\mathcal{T}u_2\|^2_\mathcal{M}-\|u_1-u_2\|_\mathcal{M}^2 \leq \|u_1-u_2\|^2_\mathcal{M}
	\end{align*}
	is equivalent to $\|(I-\mathcal{T})u_1-(I-\mathcal{T})u_2\|_\mathcal{M}^2+\|\mathcal{T}u_1-\mathcal{T}u_2\|_\mathcal{M}^2 \leq \|u_1-u_2\|_\mathcal{M}^2.$
\end{remark}

From the $\mathcal{M}$-firm non-expansiveness \cref{MFirmlyNonExpansive} we can derive the analogous in the $\mathcal{M}$-seminorm of the asymptotic regularity and the Fejér monotonicity of $\{u^k\}_k$ w.r.t.~the fixed points of $\mathcal{T}$, which we denote by $\Fix \mathcal{T}$.

\begin{lemma}\label{lem:MfejerMon}
	Let $\mathcal{A}:\bH\to 2^{\bH}$ be an operator such that $\zer\mathcal{A}\neq \emptyset$ and $\mathcal{M}$ an admissible preconditioner such that $\mathcal{M}^{-1}\mathcal{A}$ is $\mathcal{M}$-monotone. Let $\{u^k\}_k$ be the sequence generated by \textsf{PPP} according to \cref{eq:PPP}. Then, $\{u^k\}_k$ is $\mathcal{M}$-asymptotically regular and $\mathcal{M}$-Fejér monotone with respect to $\Fix\mathcal{T}$, i.e.,~we have
	\begin{equation*}
	\lim_{k\to +\infty}\| \mathcal{T}u^{k}-u^k\|_\mathcal{M}=0, \quad \| u^{k+1}-u\|_\mathcal{M}\leq \|u^k-u\|_\mathcal{M} \quad \text{for all} \ k \in \mathbb{N}, \ \textit{and} \ u \in \Fix\mathcal{T}.
	\end{equation*}
\end{lemma}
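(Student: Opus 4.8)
The plan is to derive both conclusions directly from the $\mathcal{M}$-firm non-expansiveness established in \cref{lem:Mfne}, which applies here since $\mathcal{M}^{-1}\mathcal{A}$ is $\mathcal{M}$-monotone. First I would fix an arbitrary $u \in \Fix\mathcal{T}$; since $\zer\mathcal{A}\neq\emptyset$ and fixed points of $\mathcal{T}$ correspond to zeros of $\mathcal{A}$ (via \cref{Tdefinition} and the equivalences preceding it), such a $u$ exists. Because $\mathcal{T}u = u$, I would apply \cref{MFirmlyNonExpansive} with $u_1 = u^k$ and $u_2 = u$ to obtain
\begin{equation*}
\|\mathcal{T}u^k - u\|_\mathcal{M}^2 + \|(I-\mathcal{T})u^k\|_\mathcal{M}^2 \leq \|u^k - u\|_\mathcal{M}^2.
\end{equation*}

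For the Fejér monotonicity I would rewrite the update \cref{eq:PPP} as $u^{k+1} - u = (1-\lambda_k)(u^k - u) + \lambda_k(\mathcal{T}u^k - u)$, expand $\|u^{k+1}-u\|_\mathcal{M}^2$ using the $\mathcal{M}$-seminorm version of the convex-combination identity, and combine with the displayed inequality above. The standard computation gives
\begin{equation*}
\|u^{k+1}-u\|_\mathcal{M}^2 = \|u^k - u\|_\mathcal{M}^2 - \lambda_k(2-\lambda_k)\|\mathcal{T}u^k - u^k\|_\mathcal{M}^2,
\end{equation*}
where I would use $\mathcal{T}u^k - u^k = -(I-\mathcal{T})u^k$ together with the bound above to produce the subtracted term. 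Since $\lambda_k \in [0,2]$, the factor $\lambda_k(2-\lambda_k)$ is nonnegative, so $\|u^{k+1}-u\|_\mathcal{M} \leq \|u^k - u\|_\mathcal{M}$, which is exactly the $\mathcal{M}$-Fejér monotonicity.

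For the $\mathcal{M}$-asymptotic regularity I would sum the telescoping identity over $k$ from $0$ to $\infty$. This yields
\begin{equation*}
\sum_{k\in\mathbb{N}} \lambda_k(2-\lambda_k)\|\mathcal{T}u^k - u^k\|_\mathcal{M}^2 \leq \|u^0 - u\|_\mathcal{M}^2 < +\infty,
\end{equation*}
since the partial sums of the nonnegative terms are bounded by the (finite) initial distance. Because $\sum_k \lambda_k(2-\lambda_k) = +\infty$ by hypothesis, the summability forces $\liminf_k \|\mathcal{T}u^k - u^k\|_\mathcal{M} = 0$. To upgrade this to a genuine limit, I would note that the Fejér monotonicity already implies the sequence $\{\|u^k - u\|_\mathcal{M}\}_k$ is nonincreasing, and a short argument shows $\{\|\mathcal{T}u^k - u^k\|_\mathcal{M}\}_k$ is itself monotone nonincreasing: applying $\mathcal{M}$-non-expansiveness of $\mathcal{R} = 2\mathcal{T}-I$ from \cref{remark:R} to consecutive iterates gives $\|\mathcal{T}u^{k+1}-u^{k+1}\|_\mathcal{M} \leq \|\mathcal{T}u^k - u^k\|_\mathcal{M}$, so that the full limit equals the liminf, namely zero.

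The main obstacle is purely algebraic bookkeeping in the $\mathcal{M}$-seminorm rather than anything conceptual: because $\|\cdot\|_\mathcal{M}$ is only a seminorm, I must be careful that every identity I invoke (the parallelogram/convex-combination expansion and the cross-term cancellations) holds from $\mathcal{M}$-linearity and $\mathcal{M}$-self-adjointness alone, without ever needing positive definiteness. The monotonicity of $\{\|\mathcal{T}u^k - u^k\|_\mathcal{M}\}_k$ used in the last step is the one place deserving care, and it follows cleanly from \cref{remark:R} by writing $\mathcal{T}u^{k+1}-u^{k+1}$ in terms of $\mathcal{R}$ applied to $u^{k+1}$ versus a fixed point; I would verify this carefully since it is what converts the liminf into a limit.
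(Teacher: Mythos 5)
Your proposal is correct and takes essentially the same route as the paper's proof: $\mathcal{M}$-firm non-expansiveness at a fixed point, the $\mathcal{M}$-seminorm convex-combination identity to get $\mathcal{M}$-Fej\'er monotonicity with the subtracted term $\lambda_k(2-\lambda_k)\|\mathcal{T}u^k-u^k\|^2_\mathcal{M}$, summation plus divergence of $\sum_k\lambda_k(2-\lambda_k)$ to get $\liminf_k\|\mathcal{T}u^k-u^k\|_\mathcal{M}=0$, and monotonicity of the residuals via $\mathcal{M}$-non-expansiveness of $\mathcal{R}=2\mathcal{T}-I$ to upgrade the liminf to a limit (the paper phrases the whole computation in terms of $\mathcal{R}$ and $\mu_k=\lambda_k/2$, whereas you use firm non-expansiveness of $\mathcal{T}$ directly for the Fej\'er part, which is an equivalent bookkeeping choice). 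Two small corrections: your displayed ``equality'' for $\|u^{k+1}-u\|^2_\mathcal{M}$ is really an inequality, since it combines the exact identity with the firm non-expansiveness \emph{inequality}; and the residual monotonicity comes from the decomposition $\mathcal{R}u^{k+1}-u^{k+1}=(\mathcal{R}u^{k+1}-\mathcal{R}u^k)+(1-\lambda_k/2)(\mathcal{R}u^k-u^k)$, i.e.\ a comparison of \emph{consecutive iterates} with averaging parameter $\lambda_k/2\in[0,1]$ (so the coefficient $1-\lambda_k/2$ is nonnegative even when $\lambda_k>1$), not a comparison ``versus a fixed point'' as your last sentence suggests --- and it is essential to use $\mathcal{R}$ with $\lambda_k/2$ here, because the analogous manipulation with $\mathcal{T}$ and $\lambda_k$ yields the factor $\lambda_k+|1-\lambda_k|$, which exceeds $1$ whenever $\lambda_k>1$.
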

\begin{proof}
	First, notice that for all $\alpha \in \mathbb{R}$ and for all $u, \ u' \in \bH$ we have
	\begin{equation}\label{MStrictConvex}
	\|\alpha u+(1-\alpha)u'\|_\mathcal{M}^2+\alpha(1-\alpha)\|u-u'\|^2_\mathcal{M}=\alpha \|u\|_\mathcal{M}^2+(1-\alpha)\|u'\|_\mathcal{M}^2.
	\end{equation}
	By \cref{lem:Mfne}, the operator $\mathcal{T}$ is $\mathcal{M}$-firmly non-expansive, hence, $\mathcal{R}:=2\mathcal{T}-I$ is $\mathcal{M}$-non-expansive, see \cref{remark:R}. Observe that $\Fix\mathcal{T}=\Fix\mathcal{R}$ and that, putting $\mu_k=\lambda_k/2$, we have,
	\begin{equation*}
	u^{k+1}=u^k+\mu_k(\mathcal{R}u^k-u^k) \quad \text{for all} \ k \in \mathbb{N}.
	\end{equation*}
	It follows from \cref{MStrictConvex} and the $\mathcal{M}$-non-expansiveness of $\mathcal{R}$ that for every $u \in \Fix\mathcal{T}$ and every $k \in \mathbb{N}$
	\begin{equation}\label{MfejerMasyProof}
	\begin{aligned}
	\|u^{k+1}&-u\|^2_\mathcal{M}=\|(1-\mu_k)(u^k-u)+\mu_k(\mathcal{R}u^k-u)\|_\mathcal{M}^2\\
	&=(1-\mu_k)\|u^k-u\|_\mathcal{M}^2+\mu_k\|\mathcal{R}u^k-\mathcal{R}u\|_\mathcal{M}^2-\mu_k(1-\mu_k)\|\mathcal{R}u^k-u^k\|_\mathcal{M}^2\\
	&\leq \|u^k-u\|_\mathcal{M}^2-\mu_k(1-\mu_k)\|\mathcal{R}u^k-u^k\|_\mathcal{M}^2.
	\end{aligned}
	\end{equation}
	Hence, $\{u^k\}_k$ is $\mathcal{M}$-Fejér monotone with respect to $\Fix\mathcal{T}$. For the $\mathcal{M}$-asymptotic regularity, we derive from \cref{MfejerMasyProof} that
	\begin{equation}\label{eq:summability}
	\sum_{k \in \mathbb{N}}\mu_k(1-\mu_k)\|\mathcal{R}u^k-u^k\|_\mathcal{M}^2\leq \|u^0-u\|_\mathcal{M}^2.
	\end{equation}
	Since $\sum_{k \in \mathbb{N}}\mu_k(1-\mu_k)=\tfrac14\sum_{k \in \mathbb{N}}\lambda_k(2-\lambda_k)=+\infty$, we have $\lim \inf_k\|\mathcal{R}u^k-u^k\|_\mathcal{M}=0$. However, for every $k \in \mathbb{N}$,
	\begin{equation*}
	\begin{aligned}
	\|\mathcal{R}u^{k+1}-u^{k+1}\|_\mathcal{M}=\|\mathcal{R}u^{k+1}-\mathcal{R}u^{k}+(1-\mu_k)(\mathcal{R}u^k-u^k)\|_\mathcal{M}\\
	\leq \|u^{k+1}-u^k\|_\mathcal{M}+(1-\mu_k)\|\mathcal{R}u^k-u^k)\|_\mathcal{M}=\|\mathcal{R}u^k-u^k\|_{\mathcal{M}}.
	\end{aligned}
	\end{equation*}
	Consequently, $\{\|\mathcal{R}u^k-u^k\|_\mathcal{M}\}_{k}$ converges and we have $\lim_{k\to +\infty}\|\mathcal{R}u^k-u^k\|_\mathcal{M} = 0$, that implies in particular $\lim_{k\to +\infty}\|\mathcal{T}u^{k}-u^k\|_\mathcal{M} = 0$.
\end{proof}
\begin{theorem}[Convergence]
	\label{WeakConvergencePPPnew}
	Let $\mathcal{A}:\bH\to 2^{\bH}$ be an operator with $\zer\mathcal{A}\neq \emptyset$ and $\mathcal{M}$ an admissible preconditioner such that $\mathcal{M}^{-1}\mathcal{A}$ is $\mathcal{M}$-monotone and $(\mathcal{M}+\mathcal{A})^{-1}$ is $L$-Lipschitz. Let $\{u^k\}_k$ be the sequence generated by \textsf{PPP} according to \cref{eq:PPP}.
	If every weak cluster point of $\{\mathcal{T}u^k\}_k$ lies in $\Fix\mathcal{T}$, then the sequence $\{\mathcal{T}u^k\}_k$ converges weakly in $\bH$ to some $u^* \in \Fix\mathcal{T}$. Furthermore, if $0<\inf_k \lambda_k \leq \sup_k \lambda_k < 2$ also the sequence $\{u^k\}_k$ converges weakly to $u^*$.
\end{theorem}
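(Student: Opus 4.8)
The plan is to apply (a seminorm version of) Opial's lemma to the shadow sequence $\{\mathcal{T}u^k\}_k$, using the $L$-Lipschitz continuity of $(\mathcal{M}+\mathcal{A})^{-1}$ to transfer the $\mathcal{M}$-seminorm information provided by \cref{lem:MfejerMon} into the genuine norm of $\bH$. The estimate that drives everything is obtained from the decomposition $\mathcal{M}=\mathcal{C}\mathcal{C}^*$ of \cref{CholeskyDecomposition}: since $\|v\|_\mathcal{M}^2=\langle\mathcal{C}\mathcal{C}^*v,v\rangle=\|\mathcal{C}^*v\|^2$, one has $\|\mathcal{M}v\|=\|\mathcal{C}\mathcal{C}^*v\|\le\|\mathcal{C}\|\,\|v\|_\mathcal{M}$, and combined with $\mathcal{T}=(\mathcal{M}+\mathcal{A})^{-1}\mathcal{M}$ and the Lipschitz hypothesis this gives the crucial bound $\|\mathcal{T}u-\mathcal{T}u'\|\le L\|\mathcal{C}\|\,\|u-u'\|_\mathcal{M}$ for all $u,u'\in\bH$. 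First I would record that $\Fix\mathcal{T}=\zer\mathcal{A}$ (as $u=\mathcal{T}u$ is equivalent to $\mathcal{M}u\in(\mathcal{M}+\mathcal{A})u$, i.e.\ $0\in\mathcal{A}u$), so $\Fix\mathcal{T}\neq\emptyset$. Applying the bound with $u'=u^k$ and a fixed $u\in\Fix\mathcal{T}$ yields $\|\mathcal{T}u^k-u\|\le L\|\mathcal{C}\|\,\|u^k-u\|_\mathcal{M}$, and since $\{\|u^k-u\|_\mathcal{M}\}_k$ is nonincreasing by the $\mathcal{M}$-Fejér monotonicity of \cref{lem:MfejerMon}, the sequence $\{\mathcal{T}u^k\}_k$ is bounded in $\bH$.

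Next I would prove uniqueness of the weak cluster points of $\{\mathcal{T}u^k\}_k$. Let $p,q$ be two such points, say $\mathcal{T}u^{k_n}\rightharpoonup p$ and $\mathcal{T}u^{m_n}\rightharpoonup q$; by hypothesis $p,q\in\Fix\mathcal{T}$. Because $\{\|u^k-p\|_\mathcal{M}^2\}_k$ and $\{\|u^k-q\|_\mathcal{M}^2\}_k$ both converge, expanding
\[
\|u^k-p\|_\mathcal{M}^2-\|u^k-q\|_\mathcal{M}^2=2\langle\mathcal{M}u^k,\,q-p\rangle+\|p\|_\mathcal{M}^2-\|q\|_\mathcal{M}^2
\]
shows that $\langle\mathcal{M}u^k,q-p\rangle$ converges. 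Moreover $\mathcal{M}(u^k-\mathcal{T}u^k)\to0$ strongly, since $\|\mathcal{M}(u^k-\mathcal{T}u^k)\|\le\|\mathcal{C}\|\,\|u^k-\mathcal{T}u^k\|_\mathcal{M}\to0$ by $\mathcal{M}$-asymptotic regularity, so by weak continuity of $\mathcal{M}$ one gets $\mathcal{M}u^{k_n}\rightharpoonup\mathcal{M}p$ and $\mathcal{M}u^{m_n}\rightharpoonup\mathcal{M}q$. Passing to both subsequential limits in the convergent scalar sequence forces $\langle\mathcal{M}p,q-p\rangle=\langle\mathcal{M}q,q-p\rangle$, i.e.\ $\|p-q\|_\mathcal{M}=0$. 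The Lipschitz bound then pays off a second time: $\|p-q\|=\|\mathcal{T}p-\mathcal{T}q\|\le L\|\mathcal{C}\|\,\|p-q\|_\mathcal{M}=0$, so $p=q$. A bounded sequence with a unique weak cluster point converges weakly, hence $\mathcal{T}u^k\rightharpoonup u^*$ for some $u^*\in\Fix\mathcal{T}$.

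For the second assertion I would set $e^k:=u^k-\mathcal{T}u^k$ and observe from \cref{eq:PPP} that $u^{k+1}=\mathcal{T}u^k+(1-\lambda_k)e^k$, whence
\[
e^{k+1}=(1-\lambda_k)e^k+(\mathcal{T}u^k-\mathcal{T}u^{k+1}).
\]
The perturbation is strongly null, because $\|\mathcal{T}u^k-\mathcal{T}u^{k+1}\|\le L\|\mathcal{C}\|\,\|u^k-u^{k+1}\|_\mathcal{M}=L\|\mathcal{C}\|\,\lambda_k\|e^k\|_\mathcal{M}\to0$ again by $\mathcal{M}$-asymptotic regularity. The assumption $0<\inf_k\lambda_k\le\sup_k\lambda_k<2$ gives $|1-\lambda_k|\le\rho$ for some $\rho<1$, so $\|e^{k+1}\|\le\rho\|e^k\|+\|\mathcal{T}u^k-\mathcal{T}u^{k+1}\|$, and the elementary ``contraction plus vanishing term'' estimate forces $\|e^k\|\to0$ strongly. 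Consequently $u^k=\mathcal{T}u^k+e^k\rightharpoonup u^*+0=u^*$.

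The main obstacle is conceptual rather than computational: \cref{lem:MfejerMon} controls only the degenerate $\mathcal{M}$-seminorm, which is blind to $\ker\mathcal{M}$ and therefore cannot by itself separate distinct cluster points nor force $u^k-\mathcal{T}u^k\to0$ in the true norm. The entire argument hinges on deploying the $L$-Lipschitz continuity of $(\mathcal{M}+\mathcal{A})^{-1}$ to upgrade seminorm-smallness to norm-smallness, once in the uniqueness step (turning $\|p-q\|_\mathcal{M}=0$ into $p=q$) and once in the recursion for $e^k$. The points needing care are checking that the scalar products $\langle\mathcal{M}u^k,q-p\rangle$ genuinely converge and that the contraction constant $\rho$ is strictly below $1$ under the stated bound on $\{\lambda_k\}_k$.
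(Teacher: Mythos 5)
Your proposal is correct and follows essentially the same route as the paper's own proof: the same key bound $\|\mathcal{T}u-\mathcal{T}u'\|\le L\|\mathcal{C}\|\,\|u-u'\|_\mathcal{M}$ derived from \cref{CholeskyDecomposition}, boundedness of $\{\mathcal{T}u^k\}_k$ via \cref{lem:MfejerMon}, an Opial-type uniqueness argument for weak cluster points carried out in the $\mathcal{M}$-seminorm and then upgraded to the norm using that $\mathcal{T}$ factors through $\mathcal{M}$, and finally a contraction recursion on the residual $u^k-\mathcal{T}u^k$ to obtain weak convergence of $\{u^k\}_k$. The only immaterial differences are that you unpack the Opial argument explicitly via the inner-product expansion of $\|u^k-p\|_\mathcal{M}^2-\|u^k-q\|_\mathcal{M}^2$ rather than invoking Opial's property for the seminorm, and that your residual recursion needs only a vanishing perturbation where the paper exploits summability of $\|u^{k+1}-u^k\|_\mathcal{M}^2$.
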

\begin{proof}
	Let $\mathcal{M}=\mathcal{C}\mathcal{C}^*$ be a decomposition of $\mathcal{M}$ according to \cref{CholeskyDecomposition}. First, since $(\mathcal{M}+\mathcal{A})^{-1}$ is $L$-Lipschitz and $\|\mathcal{C}^*u\|=\|u\|_\mathcal{M}$ for every $u \in \bH$, we have for all $u', \ u'' \in \bH$ 
	\begin{equation}\label{eq:step1}
	\|\mathcal{T}u'-\mathcal{T}u''\|=\|(\mathcal{M}+\mathcal{A})^{-1}\mathcal{C}\mathcal{C}^*u'-(\mathcal{M}+\mathcal{A})^{-1}\mathcal{C}\mathcal{C}^*u''\|\leq L\|\mathcal{C}\| \|u'-u''\|_{\mathcal{M}}.
	\end{equation}
	This property, combined with the $\mathcal{M}$-Fejér monotonicity of $\{u^k\}_k$ given by \cref{lem:MfejerMon} yields the boundedness of the sequence $\{\mathcal{T}u^k\}$. Indeed, for all $u \in \Fix\mathcal{T}$ we have
	\begin{equation*}
	\|\mathcal{T}u^k-u\|\leq C\|u^k-u \|_\mathcal{M} \leq C \|u^0-u \|_\mathcal{M},
	\end{equation*}
	and thus, $\{\mathcal{T}u^k\}_k$ is bounded. Furthermore, for all $u\in \Fix \mathcal{T}$, we have
	\begin{equation}\label{eq:convergenceofTuk}
	    \|\mathcal{T}u^k-u\|^2_\mathcal{M}=\|\mathcal{T}u^k-u^k\|^2_\mathcal{M}+\|u^k-u\|^2_\mathcal{M}+2\langle \mathcal{T}u^k-u^k,u^k-u \rangle_{\mathcal{M}}.
	    \end{equation}
    The term $\langle\mathcal{T}u^k-u^k,u^k-u \rangle_{\mathcal{M}}$ in \cref{eq:convergenceofTuk} converges to $0$ by the Cauchy-Schwarz inequality w.r.t.~the $\mathcal{M}$-scalar product and \cref{lem:MfejerMon}. Thus, we obtain, thanks to \cref{lem:MfejerMon}, that for all $u \in \Fix \mathcal{T}$ the sequence $\{\|\mathcal{T}u^k-u\|_\mathcal{M}\}_k$ converges.
	
	To conclude the proof we use Opial's argument adapted to our degenerate context. For any fixed point $u^*$ we define $\ell(u^*)$ to be the limit of $\|\mathcal{T}u^k-u^*\|_\mathcal{M}$. Since the sequence $\{\mathcal{T}u^k\}_k$ is bounded, has at least one weak cluster point. Let $\{\mathcal{T}u^{k_i}\}_i$ be a sequence converging to this cluster point $u^*$. By assumption, every weak cluster point of $\{\mathcal{T}u^k\}_k$ lies in $\Fix\mathcal{T}$, hence, $u^*=\mathcal{T} u^*$. Let $u^{**}$ be another weak cluster point of $\{\mathcal{T}u^k\}_k$ (which implies $u^{**}=\mathcal{T} u^{**}$) and $\{\mathcal{T}u^{l_i}\}_i$ be the sequence weakly converging to $u^{**}$. If we suppose $\|u^*- u^{**}\|_\mathcal{M}>0$, then Opial's property would give us
	\begin{equation*}
	\begin{aligned}
	\liminf_{i\to\infty}\|\mathcal{T}u^{k_i}-u^*\|_\mathcal{M}&<\liminf_{i\to\infty}\|\mathcal{T}u^{k_i}-u^{**}\|_\mathcal{M},\\
	\liminf_{i\to\infty}\|\mathcal{T}u^{l_i}-u^{**}\|_\mathcal{M}&<\liminf_{i\to\infty}\|\mathcal{T}u^{l_i}-u^{*}\|_\mathcal{M},
	\end{aligned}
	\end{equation*}
	so that we get both $\ell(u^{*})<\ell(u^{**})$ and $\ell(u^{**})<\ell(u^{*})$, which is a contradiction and thus, $\|u^{*}-u^{**}\|_{\mathcal{M}}=0$. Therefore, $\mathcal{M}u^*=\mathcal{M}u^{**}$ and in particular $u^*=\mathcal{T}u^*=\mathcal{T}u^{**}=u^{**}$ by construction of $\mathcal{T}$. Thus, the iterates $\{\mathcal{T}u^k\}_k$ weakly converge to a fixed point of $\mathcal{T}$.
	
	Let us turn our attention to the sequence $\{u^k\}_k$. Under the additional assumption $0<\inf_k \lambda_k \leq \sup_k \lambda_k < 2$, say $\lambda_k \in [\epsilon, 2-\epsilon]$ for some $\epsilon > 0$, we can prove that $\{\mathcal{T}u^{k}-u^{k}\}_k$ converges to zero strongly, so that, since $\{\mathcal{T}u^{k}\}_k$ converges weakly to a fixed point of  $\mathcal{T}$, also $\{u^{k}\}_k$ would converge weakly to the same point. Let us call $\phi_k:=\|\mathcal{T}u^{k}-u^{k}\|^2$. We have
	\begin{equation*}
	\begin{aligned}
	\phi_{k+1}=\| &\mathcal{T}u^{k+1}-\mathcal{T}u^k-(1-\lambda_k)(u^k-\mathcal{T}u^k)\|^2 = \| \mathcal{T}u^{k+1}-\mathcal{T}u^k\|^2\\
    &+|1-\lambda_k|^2\|\mathcal{T}u^k-u^k\|^2+2(1-\lambda_k)\langle \mathcal{T}u^{k+1}-\mathcal{T}u^k, \mathcal{T}u^k-u^k\rangle.
    \end{aligned}
	\end{equation*}
	Therefore, using \cref{eq:step1}, the fact that $|1-\lambda_k|\leq 1-\epsilon$ for all $k$, and choosing $\delta> 0$ such that $\eta:=(1-\epsilon)^2+\delta < 1$ we get
	\begin{equation*}
	\begin{aligned}
	\phi_{k+1} &\leq \|\mathcal{T}u^{k+1}-\mathcal{T}u^k\|^2+(1-\epsilon)^2\phi_k+\left( \frac{1}{\delta}\|\mathcal{T}u^{k+1}-\mathcal{T}u^k\|^2+ \delta \phi_k\right)\\
	& \leq \eta \phi_k + C \|u^{k+1}-u^k\|_\mathcal{M}^2
	\end{aligned}
	\end{equation*}
	The relation $\phi_{k+1}\leq \eta \phi_k + \alpha_k$, with $\alpha_k = C\|u^{k+1}-u^k\|^2_\mathcal{M}$, which is summable when $\lambda_k \in [\epsilon, 2-\epsilon]$, see~\cref{eq:summability}, allows to conclude that also $\{\phi_k\}_k$ is summable. Indeed, induction yields $\phi_{k} \leq \eta^{k}\phi_0+\sum_{j=0}^k \eta^{j}\alpha_{k-j}$, whose right-hand side is summable being the sum of a geometric sequence and the Cauchy product of two absolutely convergent series. Thus, $\{\phi_k\}_k$ is summable too and in particular $\phi_k \to 0$.
\end{proof}

\begin{corollary}\label{PPPwithmaxmon}
        Let $\mathcal{A}:\bH\to 2^{\bH}$ be a maximal monotone operator with $\zer\mathcal{A}\neq \emptyset$ and $\mathcal{M}$ an admissible preconditioner such that $(\mathcal{M}+\mathcal{A})^{-1}$ is $L$-Lipschitz. Let $\{u^k\}_k$ be the sequence generated by \textsf{PPP} according to \cref{eq:PPP}. Then $\{\mathcal{T}u^k\}_k$ converges weakly to some $u^* \in \Fix\mathcal{T}$, i.e.,~a zero of $\mathcal{A}$. Furthermore, if $0<\inf_k \lambda_k \leq \sup_k \lambda_k < 2$ also the sequence $\{u^k\}_k$ converges weakly to $u^*$.
    \end{corollary}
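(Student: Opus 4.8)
The plan is to deduce the corollary directly from \cref{WeakConvergencePPPnew} by checking that maximal monotonicity supplies precisely the two hypotheses of that theorem which are not already among the assumptions stated here, namely the $\mathcal{M}$-monotonicity of $\mathcal{M}^{-1}\mathcal{A}$ and the requirement that every weak cluster point of $\{\mathcal{T}u^k\}_k$ lie in $\Fix\mathcal{T}$. The first is immediate: since $\mathcal{A}$ is monotone, so is its restriction $\mathcal{A}\cap(\bH\times\Image\mathcal{M})$, and \cref{prop:MMaximalMonotonicityProp} then gives that $\mathcal{M}^{-1}\mathcal{A}$ is $\mathcal{M}$-monotone. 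All the remaining hypotheses of \cref{WeakConvergencePPPnew} ($\zer\mathcal{A}\neq\emptyset$, admissibility, and $L$-Lipschitzness of $(\mathcal{M}+\mathcal{A})^{-1}$) are assumed, so only the cluster-point condition requires work.

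For that condition I would argue by a Minty-type (weak--strong closedness) passage to the limit. Writing $z^k:=\mathcal{T}u^k$, the identity $z^k=(\mathcal{M}+\mathcal{A})^{-1}\mathcal{M}u^k$ unfolds to $\mathcal{M}u^k\in(\mathcal{M}+\mathcal{A})z^k$, that is,
\begin{equation*}
\mathcal{M}(u^k-z^k)\in\mathcal{A}z^k,\qquad\text{so that}\qquad\bigl(z^k,\mathcal{M}(u^k-z^k)\bigr)\in\mathcal{A}.
\end{equation*}
The key point is that the second coordinate converges \emph{strongly} to $0$: fixing a decomposition $\mathcal{M}=\mathcal{C}\mathcal{C}^*$ from \cref{CholeskyDecomposition}, one has $\|\mathcal{M}v\|\leq\|\mathcal{C}\|\,\|\mathcal{C}^*v\|=\|\mathcal{C}\|\,\|v\|_\mathcal{M}$, and the $\mathcal{M}$-asymptotic regularity of \cref{lem:MfejerMon} gives $\|u^k-z^k\|_\mathcal{M}\to0$, whence $\mathcal{M}(u^k-z^k)\to0$ strongly. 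Now let $z^{k_i}\rightharpoonup\bar u$ be any weakly convergent subsequence. Then $\bigl(z^{k_i},\mathcal{M}(u^{k_i}-z^{k_i})\bigr)\in\mathcal{A}$ with $z^{k_i}\rightharpoonup\bar u$ and $\mathcal{M}(u^{k_i}-z^{k_i})\to0$, and since the graph of a maximal monotone operator is sequentially closed in the weak$\times$strong topology (see, e.g., \cite{BCombettes}), we obtain $(\bar u,0)\in\mathcal{A}$, i.e.~$\bar u\in\zer\mathcal{A}$. Finally $0\in\mathcal{A}\bar u$ yields $\mathcal{M}\bar u\in(\mathcal{M}+\mathcal{A})\bar u$, hence $\bar u=\mathcal{T}\bar u\in\Fix\mathcal{T}$ by single-valuedness of $\mathcal{T}$.

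With all hypotheses verified, \cref{WeakConvergencePPPnew} yields the weak convergence of $\{\mathcal{T}u^k\}_k$ to some $u^*\in\Fix\mathcal{T}=\zer\mathcal{A}$, together with the additional conclusion on $\{u^k\}_k$ under $0<\inf_k\lambda_k\leq\sup_k\lambda_k<2$. The main obstacle is the step upgrading the $\mathcal{M}$-seminorm decay $\|u^k-z^k\|_\mathcal{M}\to0$ to the genuine strong convergence $\mathcal{M}(u^k-z^k)\to0$ needed to feed the weak--strong closedness of $\mathcal{A}$; this is exactly where the factorization $\mathcal{M}=\mathcal{C}\mathcal{C}^*$ is essential, since $\mathcal{M}^{-1}\mathcal{A}$ itself is controlled only in the seminorm. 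Everything else is a routine application of the theorem and of standard maximal-monotone-operator facts.
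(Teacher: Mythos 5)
Your proposal is correct and follows essentially the same route as the paper: reduce to \cref{WeakConvergencePPPnew} via the monotonicity of $\mathcal{A}$, then verify the cluster-point condition by noting $\mathcal{M}(u^k-\mathcal{T}u^k)\in\mathcal{A}\mathcal{T}u^k$ tends to $0$ strongly (by $\mathcal{M}$-asymptotic regularity) and invoking the weak--strong sequential closedness of the graph of a maximal monotone operator. Your explicit bound $\|\mathcal{M}v\|\leq\|\mathcal{C}\|\,\|v\|_\mathcal{M}$ via the factorization $\mathcal{M}=\mathcal{C}\mathcal{C}^*$ just spells out a step the paper leaves implicit.
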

    \begin{proof}
    Since $\mathcal{A}$ is monotone, $\mathcal{M}^{-1}\mathcal{A}$ is $\mathcal{M}$-monotone.
        We just have to prove that every weak cluster point of $\{\mathcal{T}u^k\}_k$ belongs to $\Fix\mathcal{T}$. Assume $\mathcal{T}u^{k_i}\rightharpoonup u\in \bH$. Using the $\mathcal{M}$-asymptotic regularity of $\{u^k\}_k$ we have
    \begin{equation}\label{ConvergenzaAu}
        \mathcal{A}\mathcal{T}u^{k_i}\ni \mathcal{M}(u^{k_i}-\mathcal{T}u^{k_i})\longrightarrow 0.
    \end{equation}
    By the maximality of $\mathcal{A}$ we have that $\mathcal{A}$ is closed in $\bH_{\textit{weak}}\times \bH_{\textit{strong}}$ (see \cite[Proposition 20.38]{BCombettes}), hence $0 \in \mathcal{A}u$. In other words, $u$ is a fixed point of $\mathcal{T}$.
    \end{proof}

\begin{remark}\label{rem:ContinuityOfAinzero}
    We point out that in \cref{PPPwithmaxmon}, we assumed two crucial properties, namely the Lipschitz regularity of $(\mathcal{M}+\mathcal{A})^{-1}$ and the maximality of $\mathcal{A}$. The former, which is a mild assumption especially in applications to splitting algorithms, is employed, together with the boundedness of $\{u^k\}_k$ in the $\mathcal{M}$-seminorm, to establish the boundedness of $\{\mathcal{T}u^k\}_k$ in $\bH$. The latter is only assumed for its geometric consequences, namely that if for all $k \in \mathbb{N}$,  $(\xi^k,a^k)\in \mathcal{A}$ and $a^k \to 0$, $\xi^k\rightharpoonup \xi$, then $0 \in \mathcal{A}\xi$,
    since this guarantees that every weak cluster point of $\{\mathcal{T}u^k\}_k$ lies in $\Fix \mathcal{T}$. Other different assumptions could be considered, for instance in \cite{BrediesDRS, Bredies2017APP}, the authors rely on the demiclosedness principle, which, however, could be challenging to control and may not hold in general.
     \end{remark}

\subsection{The reduced preconditioned proximal point algorithm}
The aim of splitting methods is to turn an inclusion $0\in Ax$ with possibly difficult resolvent for $A$ into an inclusion $0\in A_{0}x + \cdots + A_{N}x$ where the individual resolvents for the $A_{i}$ are cheaper to evaluate. This approach, however, increases the number of variables that one needs to store. As we have seen in the introduction, one can reduce the number of variables in the case of the \textsf{DRS} method, and perform an iteration in just a single variable. While this is not of great practical importance for \textsf{DRS}, it will be more important in the case of methods where the number $N$ is larger. In this section we will derive a systematic way to deduce a reduced version of degenerate preconditioned proximal point methods.

The main idea is that, for $u \in \bH$, to evaluate $\mathcal{T}u = (\mathcal{M}+\mathcal{A})^{-1}\mathcal{M}u$ we are actually interested in the information contained in $\mathcal{M}u$ which belongs to $\Image\mathcal{M}$, and for degenerate preconditioners this is typically a proper subspace of $\bH$. In this section we show how to exploit this fact to significantly reduce the number of variables involved in the iterations, resulting in a storage efficient, but still equivalent, method that we will call \textit{reduced preconditioned proximal point} (\textsf{rPPP}) algorithm. In the rest of this section we assume that $\mathcal{M}$ has closed range.

\begin{lemma}\label{MonotoneOperatorsPushForward}
	Let $\mathcal{A}:\bH\to 2^{\bH}$ be an operator, $\mathcal{M}:\bH\to\bH$ be an admissible preconditioner with closed range and $\mathcal{M}=\mathcal{C}\mathcal{C}^*$ be a decomposition of $\mathcal{M}$ according to \cref{CholeskyDecomposition}. The parallel composition $\mathcal{C}^* \rhd \mathcal{A}:=(\mathcal{C}^*\mathcal{A}^{-1}\mathcal{C})^{-1}$ can be expressed as
	\begin{equation*}
	\mathcal{C}^* \rhd \mathcal{A} = \{(\mathcal{C}^* x, \mathcal{C}^* y)\in \bH^2 \text{ s.t. } (x,y) \in \mathcal{M}^{-1}\mathcal{A}\}.
	\end{equation*}
\end{lemma}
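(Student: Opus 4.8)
The plan is to prove the claimed identity by unfolding the definition of the parallel composition into an explicit graph description and then checking the two set inclusions separately. The only slightly delicate point is the bookkeeping of inverses and compositions of multivalued operators, together with pinpointing exactly where the closed-range hypothesis enters.

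First I would rewrite $\mathcal{C}^*\rhd\mathcal{A}=(\mathcal{C}^*\mathcal{A}^{-1}\mathcal{C})^{-1}$ using the fact that the inverse of a composition reverses the order and inverts each factor, i.e.\ $(\mathcal{C}^*\mathcal{A}^{-1}\mathcal{C})^{-1}=\mathcal{C}^{-1}\mathcal{A}(\mathcal{C}^*)^{-1}$, where $\mathcal{C}^{-1}$ and $(\mathcal{C}^*)^{-1}$ are understood as (possibly multivalued) relational inverses. Unwinding this graph, a pair $(a,b)\in\bD\times\bD$ belongs to $\mathcal{C}^*\rhd\mathcal{A}$ if and only if there exists $x\in\bH$ with $\mathcal{C}^*x=a$ and $\mathcal{C}b\in\mathcal{A}x$. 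I would be careful here to track the direction of $\mathcal{A}^{-1}$ correctly: $v\in\mathcal{A}^{-1}(\mathcal{C}b)$ means $\mathcal{C}b\in\mathcal{A}v$, not the other way round, and this is precisely the step where a direction slip is easiest.

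Next I would use the equivalences established in \cref{M-1AeA}, namely $(x,y)\in\mathcal{M}^{-1}\mathcal{A}\iff(x,\mathcal{M}y)\in\mathcal{A}\iff\mathcal{C}\mathcal{C}^*y\in\mathcal{A}x$, to rewrite the right-hand side of the claim as $\{(\mathcal{C}^*x,\mathcal{C}^*y):\mathcal{C}\mathcal{C}^*y\in\mathcal{A}x\}$, recalling that $\mathcal{M}=\mathcal{C}\mathcal{C}^*$. Comparing with the graph description of $\mathcal{C}^*\rhd\mathcal{A}$ from the previous step then reduces everything to two inclusions. For the inclusion of the right-hand side into $\mathcal{C}^*\rhd\mathcal{A}$, given $(\mathcal{C}^*x,\mathcal{C}^*y)$ with $\mathcal{C}\mathcal{C}^*y\in\mathcal{A}x$, the same $x$ serves as the witness, since $\mathcal{C}(\mathcal{C}^*y)=\mathcal{C}\mathcal{C}^*y\in\mathcal{A}x$ while $\mathcal{C}^*x=\mathcal{C}^*x$; this direction needs no extra hypotheses.

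The converse inclusion is where the closed-range assumption is essential, and I expect this to be the main (though still mild) obstacle. Given $(a,b)\in\mathcal{C}^*\rhd\mathcal{A}$ with a witness $x$ satisfying $\mathcal{C}^*x=a$ and $\mathcal{C}b\in\mathcal{A}x$, I must produce $y\in\bH$ with $\mathcal{C}^*y=b$ so that $\mathcal{C}\mathcal{C}^*y=\mathcal{C}b\in\mathcal{A}x$, which exhibits $(x,y)\in\mathcal{M}^{-1}\mathcal{A}$ together with $(a,b)=(\mathcal{C}^*x,\mathcal{C}^*y)$. Such a $y$ exists precisely because $\mathcal{M}$ has closed range, so that $\mathcal{C}^*$ is onto by the last assertion of \cref{CholeskyDecomposition}. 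This surjectivity of $\mathcal{C}^*$ is the only nontrivial ingredient; without it the element $b\in\bD$ might fail to be of the form $\mathcal{C}^*y$ and the identity could break down. Combining the two inclusions yields the claimed equality.
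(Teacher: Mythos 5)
Your proposal is correct and follows essentially the same route as the paper's proof: both unwind the definition of $(\mathcal{C}^*\mathcal{A}^{-1}\mathcal{C})^{-1}$ into an elementwise description, verify the easy inclusion of the graph set into $\mathcal{C}^*\rhd\mathcal{A}$ without extra hypotheses, and use surjectivity of $\mathcal{C}^*$ (from the closed-range assumption via \cref{CholeskyDecomposition}) to lift elements of $\bD$ back to $\bH$ for the converse inclusion. The only (cosmetic) difference is that by writing out the graph description first, you need surjectivity only to lift the second coordinate $b$, whereas the paper lifts both coordinates and then passes to an intermediate witness $\widetilde{x}\in\mathcal{A}^{-1}\mathcal{M}y$ with $\mathcal{C}^*\widetilde{x}=\mathcal{C}^*x$.
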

\begin{proof}
	Let $\mathcal{B}$ be the right-hand side. If $(\bar{x},\bar{y}) \in \mathcal{B}$, then $\bar x = \mathcal{C^*}x$ and $\bar y = \mathcal{C^*}y$ for some $x,y\in\bH$ with $(x,y) \in \mathcal{M}^{-1}\mathcal{A}$. Then, $x \in \mathcal{A}^{-1}\mathcal{C}\mathcal{C}^*y$, hence, $\mathcal{C}^*x \in  \mathcal{C}^*\mathcal{A}^{-1}\mathcal{C}\mathcal{C}^*y$, thus $(\bar x, \bar y)=(\mathcal{C}^*x,\mathcal{C}^*y) \in \mathcal{C}^*\rhd \mathcal{A}$. Conversely, if $(\bar x,\bar y)\in \mathcal{C}^*\rhd \mathcal{A}$ then $\bar x \in \mathcal{C}^*\mathcal{A}^{-1}\mathcal{C}\bar y$. Since $\mathcal{C^*}$ is onto, there is $x,y\in\bH$ such that $\bar x=\mathcal{C^*}x,\ \bar y=\mathcal{C^*}y$ and hence $\mathcal{C^*}x\in \mathcal{C}^*\mathcal{A}^{-1}\mathcal{M}y$. This precisely means that there exists $\widetilde{x}\in \mathcal{A}^{-1}\mathcal{M}y$ such that $\mathcal{C}^{*}x = \mathcal{C}^*\widetilde{x}$, since $\widetilde{x} \in \mathcal{A}^{-1}\mathcal{M}y$ we have $(\widetilde{x},y) \in \mathcal{M}^{-1}\mathcal{A}$, thus $(\bar{x},\bar{y})=(\mathcal{C}^*\widetilde{x},\mathcal{C}^*y) \in \mathcal{B}$.
\end{proof}

We denote by $\widetilde{\mathcal{T}}$ the resolvent of $\mathcal{C}^*\rhd \mathcal{A}$, i.e., 
\begin{equation}\label{TtildeDefinition}
\widetilde{\mathcal{T}}:=\left(I+\mathcal{C}^*\rhd \mathcal{A}\right)^{-1}.
\end{equation}    We have the following fundamental theorem.
\begin{theorem}\label{TtildeTheorem}
	Let $\mathcal{A}$ be an operator on $\bH$ and $\mathcal{M}$ an admissible preconditioner with closed range such that $\mathcal{M}^{-1}\mathcal{A}$ is $\mathcal{M}$-monotone. Let $\mathcal{M}=\mathcal{C}\mathcal{C}^*$ be a decomposition of $\mathcal{M}$ according to \cref{CholeskyDecomposition} with $\mathcal{C}:\bD\to\bH$. Then, $\mathcal{C}^*\rhd \mathcal{A}$ is a maximal monotone operator and for $\widetilde{\mathcal{T}}$ according to \cref{TtildeDefinition} the following identity holds
	\begin{equation}\label{TtildeDefinitionEquivalence}
	\widetilde{\mathcal{T}}= \mathcal{C}^{*}(\mathcal{M}+\mathcal{A})^{-1}\mathcal{C}.
	\end{equation}
	In particular, $\widetilde{\mathcal{T}}:\bD\to\bD$ is everywhere well defined and firmly non-expansive.
\end{theorem}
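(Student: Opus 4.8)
The plan is to prove the three assertions in the order monotonicity, resolvent identity, and then maximality together with firm non-expansiveness, letting each stage feed the next. First I would establish that $\mathcal{C}^*\rhd\mathcal{A}$ is monotone. Using the explicit graph description from \cref{MonotoneOperatorsPushForward}, any two of its points are $(\mathcal{C}^*x,\mathcal{C}^*y)$ and $(\mathcal{C}^*x',\mathcal{C}^*y')$ with $(x,y),(x',y')\in\mathcal{M}^{-1}\mathcal{A}$, and since $\mathcal{M}=\mathcal{C}\mathcal{C}^*$,
\begin{equation*}
\langle \mathcal{C}^*y-\mathcal{C}^*y',\,\mathcal{C}^*x-\mathcal{C}^*x'\rangle=\langle \mathcal{M}(y-y'),x-x'\rangle=\langle y-y',x-x'\rangle_{\mathcal{M}}\geq0
\end{equation*}
by the $\mathcal{M}$-monotonicity of $\mathcal{M}^{-1}\mathcal{A}$. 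Monotonicity immediately forces the resolvent $\widetilde{\mathcal{T}}=(I+\mathcal{C}^*\rhd\mathcal{A})^{-1}$ to be single-valued (two outputs $\bar y_1,\bar y_2$ for the same input satisfy $\langle \bar y_2-\bar y_1,\bar y_1-\bar y_2\rangle\geq0$, hence coincide), a fact I will use to turn an inclusion into an equality.

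The core step is the identity \cref{TtildeDefinitionEquivalence}. Because $\mathcal{M}$ has closed range, \cref{CholeskyDecomposition} makes $\mathcal{C}^*$ surjective, so $\Image\mathcal{C}=\mathcal{C}(\Image\mathcal{C}^*)=\Image\mathcal{M}$; hence $\mathcal{C}\bar x\in\Image\mathcal{M}$ for every $\bar x\in\bD$, and writing $\mathcal{C}\bar x=\mathcal{M}w$ shows $z:=(\mathcal{M}+\mathcal{A})^{-1}\mathcal{C}\bar x=\mathcal{T}w$ is single-valued and defined for all $\bar x$ by admissibility. I would then set $\bar y:=\mathcal{C}^*z$ and rewrite $\mathcal{C}\bar x-\mathcal{M}z\in\mathcal{A}z$ as $\mathcal{C}(\bar x-\bar y)\in\mathcal{A}z$ using $\mathcal{M}z=\mathcal{C}\bar y$. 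Surjectivity of $\mathcal{C}^*$ supplies a $q$ with $\mathcal{C}^*q=\bar x-\bar y$, whence $\mathcal{M}q=\mathcal{C}(\bar x-\bar y)\in\mathcal{A}z$, i.e. $(z,q)\in\mathcal{M}^{-1}\mathcal{A}$ and therefore $(\bar y,\bar x-\bar y)\in\mathcal{C}^*\rhd\mathcal{A}$ by \cref{MonotoneOperatorsPushForward}. This gives $\bar y\in\widetilde{\mathcal{T}}\bar x$, and the single-valuedness of $\widetilde{\mathcal{T}}$ upgrades the inclusion to the equality $\widetilde{\mathcal{T}}=\mathcal{C}^*(\mathcal{M}+\mathcal{A})^{-1}\mathcal{C}$ on all of $\bD$, which in particular shows $\widetilde{\mathcal{T}}$ is everywhere defined.

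With the identity in hand, the remaining properties are routine. The full domain of $\widetilde{\mathcal{T}}$ means $I+\mathcal{C}^*\rhd\mathcal{A}$ is onto $\bD$, so Minty's theorem promotes the monotone operator $\mathcal{C}^*\rhd\mathcal{A}$ to a maximal monotone one. For firm non-expansiveness I would take $\bar y_i=\widetilde{\mathcal{T}}\bar x_i$, so that $\bar x_i-\bar y_i\in(\mathcal{C}^*\rhd\mathcal{A})\bar y_i$, and monotonicity yields $\langle \bar x_1-\bar x_2,\bar y_1-\bar y_2\rangle\geq\|\bar y_1-\bar y_2\|^2$, the defining inequality.

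I expect the resolvent identity to be the main obstacle: one must simultaneously control that $(\mathcal{M}+\mathcal{A})^{-1}\mathcal{C}$ is single-valued with full domain (which rests on $\Image\mathcal{C}=\Image\mathcal{M}$ together with admissibility) and produce the witness $q$ satisfying $\mathcal{C}^*q=\bar x-\bar y$ and $\mathcal{M}q\in\mathcal{A}z$. The surjectivity of $\mathcal{C}^*$ furnished by the closed-range hypothesis is precisely what makes this construction possible, and it is the hypothesis that cannot be dropped.
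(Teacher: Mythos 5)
Your proof is correct, and while it shares the paper's skeleton — monotonicity of $\mathcal{C}^*\rhd\mathcal{A}$ via \cref{MonotoneOperatorsPushForward}, construction of the candidate value by applying $(\mathcal{M}+\mathcal{A})^{-1}\mathcal{C}$ (well defined by surjectivity of $\mathcal{C}^*$ plus admissibility), and Minty's theorem — the route through the key identity is genuinely different. The paper works with the \emph{inverse} operator $\mathcal{C}^*\mathcal{A}^{-1}\mathcal{C}$: for $u\in\bD$ and $v=(\mathcal{M}+\mathcal{A})^{-1}\mathcal{C}u$ it checks that $p=u-\mathcal{C}^*v$ satisfies $u-p=\mathcal{C}^*v\in\mathcal{C}^*\mathcal{A}^{-1}\mathcal{C}p$, establishing the Woodbury-like identity \cref{WoodburyMoreau} (with Minty supplying maximality and hence single-valuedness of that resolvent), and only then recovers $\widetilde{\mathcal{T}}$ through Moreau's identity. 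You instead verify membership in the graph of $I+\mathcal{C}^*\rhd\mathcal{A}$ directly: with $z=(\mathcal{M}+\mathcal{A})^{-1}\mathcal{C}\bar x$ and $\bar y=\mathcal{C}^*z$ you use surjectivity of $\mathcal{C}^*$ a second time to produce a witness $q$ with $\mathcal{C}^*q=\bar x-\bar y$ and $\mathcal{M}q\in\mathcal{A}z$, so that $(\bar y,\bar x-\bar y)\in\mathcal{C}^*\rhd\mathcal{A}$ by \cref{MonotoneOperatorsPushForward}; single-valuedness of $\widetilde{\mathcal{T}}$, which you obtain from plain monotonicity, upgrades the resulting inclusion to the identity \cref{TtildeDefinitionEquivalence}, and Minty enters only at the very end, applied to $\mathcal{C}^*\rhd\mathcal{A}$ itself, to get maximality. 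What your route buys is the complete elimination of Moreau's identity and of any maximality considerations before the final step; what it costs is the extra preimage construction for $q$, which the paper avoids because its computation only needs to push $v\in\mathcal{A}^{-1}\mathcal{C}p$ forward by $\mathcal{C}^*$. In fact you can have the best of both: instead of invoking \cref{MonotoneOperatorsPushForward} and constructing $q$, use the definition $\mathcal{C}^*\rhd\mathcal{A}=(\mathcal{C}^*\mathcal{A}^{-1}\mathcal{C})^{-1}$ directly — from $\mathcal{C}(\bar x-\bar y)\in\mathcal{A}z$ you get $\bar y=\mathcal{C}^*z\in\mathcal{C}^*\mathcal{A}^{-1}\mathcal{C}(\bar x-\bar y)$ immediately, hence $(\bar y,\bar x-\bar y)\in\mathcal{C}^*\rhd\mathcal{A}$ with no second appeal to surjectivity.
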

\begin{proof}
	First, since $\mathcal{M}^{-1}\mathcal{A}$ is $\mathcal{M}$-monotone, $\mathcal{C}^*\mathcal{A}^{-1}\mathcal{C}$ is a monotone operator on $\bD$. Indeed, by \cref{MonotoneOperatorsPushForward}, for all $(x,y), \ (x',y')\in \mathcal{C}^*\rhd \mathcal{A}$ there exist $(u,v), \ (u',v')\in \mathcal{M}^{-1}\mathcal{A}$ such that $\mathcal{C}^*u=x, \ \mathcal{C}^*u'=x';\ \mathcal{C}^*v=y, \ \mathcal{C}^*v'=y'$. Therefore, using that $\mathcal{M}^{-1}\mathcal{A}$ is $\mathcal{M}$-monotone, we have
	\begin{equation*}
	\langle y-y',x-x'\rangle =    \langle \mathcal{C}^*v-\mathcal{C}^*v',\mathcal{C}^*u-\mathcal{C}^*u'\rangle = \langle u'-v',u-v\rangle_\mathcal{M}\geq 0.
	\end{equation*}
	This shows the monotonicity of $\mathcal{C}^*\rhd \mathcal{A}$, which is equivalent to the monotonicity of its inverse $\mathcal{C}^*\mathcal{A}^{-1}\mathcal{C}$. The rest of the proof is an application of a Woodbury-like identity:
	\begin{equation}\label{WoodburyMoreau}
	\left( I+\mathcal{C}^*\mathcal{A}^{-1}\mathcal{C}\right)^{-1}=I-\mathcal{C}^*\left(\mathcal{C}\mathcal{C}^*+\mathcal{A}\right)^{-1}\mathcal{C}.
	\end{equation}
	To prove this, we proceed as in \cite[Proposition 23.25]{BCombettes}. Let $u \in \bD$ and note that, by the surjectivity of $\mathcal{C}^*$ and the fact that $\mathcal{T}$ has full domain, $v =\left(\mathcal{C}\mathcal{C}^*+\mathcal{A}\right)^{-1}\mathcal{C}u$ is well defined. We prove that for $p:=u-\mathcal{C}^*v$ it holds $p=(I+\mathcal{C}^*\mathcal{A}^{-1}\mathcal{C})^{-1}u$. Since $\mathcal{C}u\in \mathcal{C}\mathcal{C}^*v+\mathcal{A}v$, we have
	\begin{equation*}
	\mathcal{C}p=\mathcal{C}\left(u-\mathcal{C}^*v\right)=\mathcal{C}u-\mathcal{C}\mathcal{C}^*v\in \mathcal{A}v.
	\end{equation*}
	Therefore, $v \in \mathcal{A}^{-1}\mathcal{C}p$, hence $u-p=\mathcal{C}^*v \in \mathcal{C}^*\mathcal{A}^{-1}\mathcal{C}p$. This proves that the right hand side in \cref{WoodburyMoreau} is contained in $\left( I+\mathcal{C}^*\mathcal{A}^{-1}\mathcal{C}\right)^{-1}$. It follows that the latter operator has full domain, and since $\mathcal{C}^*\mathcal{A}^{-1}\mathcal{C}$ is monotone, by Minty's theorem, $\mathcal{C}^*\mathcal{A}^{-1}\mathcal{C}$ is also maximal and, hence, its resolvent is single valued, and the two operators in \cref{WoodburyMoreau} coincide. Using the Moreau identity on $\left( I+\mathcal{C}^*\mathcal{A}^{-1}\mathcal{C}\right)^{-1}$ we get the desired result.
\end{proof}
We are now ready to state and prove the main theorem of this section, which in the degenerate case provides an equivalent reduced scheme for \textsf{PPP}.
\begin{theorem}[Reduction and convergence]\label{ReductionAndConvergence}
	Let $\mathcal{A}:\bH\to 2^{\bH}$ with $\zer\mathcal{A}\neq~\emptyset$ and let $\mathcal{M}$ be an admissible preconditioner with closed range such that $\mathcal{M}^{-1}\mathcal{A}$ is $\mathcal{M}$-monotone. Let  $\mathcal{M}=\mathcal{C}\mathcal{C}^*$ be a decomposition of $\mathcal{M}$ according to \cref{CholeskyDecomposition} with $\mathcal{C}:\bD\to\bH$ and denote by $\{u^k\}_k$ the sequence generated by \textsf{PPP} from \cref{eq:PPP}. Then, the following proximal point algorithm
	\begin{equation}\label{ReducedAlg}
		w^0=\mathcal{C}^*u^0, \quad w^{k+1}=w^k+\lambda_k(\widetilde{\mathcal{T}}w^k-w^k),
	\end{equation}
	is equivalent to \cref{eq:PPP}, in the sense that $w^k=\mathcal{C}^*u^k$ for all $k \in \mathbb{N}$. Moreover, $\{w^k\}_k$ weakly converges in $\bD$ to a point $w^*$ such that $\left(\mathcal{M}+\mathcal{A}\right)^{-1}\mathcal{C}w^*$ is a fixed point of $\mathcal{T}$.
\end{theorem}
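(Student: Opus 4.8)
The plan is to prove the two assertions in turn: first the pathwise equivalence $w^k=\mathcal{C}^*u^k$ by a direct induction, and then the weak convergence by reducing \cref{ReducedAlg} to the classical (non-degenerate) Krasnoselskii--Mann theorem applied to the resolvent $\widetilde{\mathcal{T}}$ on the Hilbert space $\bD$. The base case $w^0=\mathcal{C}^*u^0$ holds by definition. The engine of the inductive step is the intertwining relation $\mathcal{C}^*\mathcal{T}=\widetilde{\mathcal{T}}\mathcal{C}^*$, which follows at once from $\mathcal{T}=(\mathcal{M}+\mathcal{A})^{-1}\mathcal{M}=(\mathcal{M}+\mathcal{A})^{-1}\mathcal{C}\mathcal{C}^*$ together with the identity $\widetilde{\mathcal{T}}=\mathcal{C}^*(\mathcal{M}+\mathcal{A})^{-1}\mathcal{C}$ from \cref{TtildeTheorem}. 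Applying $\mathcal{C}^*$ to the update \cref{eq:PPP} and using $w^k=\mathcal{C}^*u^k$ gives
\[
\mathcal{C}^*u^{k+1}=\mathcal{C}^*u^k+\lambda_k(\mathcal{C}^*\mathcal{T}u^k-\mathcal{C}^*u^k)=w^k+\lambda_k(\widetilde{\mathcal{T}}w^k-w^k)=w^{k+1},
\]
closing the induction.

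For the convergence, \cref{TtildeTheorem} tells us that $\mathcal{C}^*\rhd\mathcal{A}$ is maximal monotone and that $\widetilde{\mathcal{T}}$ is its resolvent, hence everywhere defined and firmly non-expansive on $\bD$. Thus \cref{ReducedAlg} is precisely a relaxed proximal point iteration for a maximal monotone operator in the \emph{non-degenerate} Hilbert space $\bD$, and I would invoke the standard Krasnoselskii--Mann theorem (writing $\widetilde{\mathcal{T}}$ as $\tfrac12$-averaged and using $\lambda_k\in[0,2]$ with $\sum_k\lambda_k(2-\lambda_k)=+\infty$) to obtain weak convergence of $\{w^k\}_k$ to a fixed point of $\widetilde{\mathcal{T}}$. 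This requires $\Fix\widetilde{\mathcal{T}}\neq\emptyset$, which follows from $\zer\mathcal{A}\neq\emptyset$: since $\Fix\mathcal{T}=\zer\mathcal{A}$, any $u^*\in\Fix\mathcal{T}$ satisfies $\widetilde{\mathcal{T}}(\mathcal{C}^*u^*)=\mathcal{C}^*\mathcal{T}u^*=\mathcal{C}^*u^*$ by the same intertwining relation, so $\mathcal{C}^*u^*\in\Fix\widetilde{\mathcal{T}}$.

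It then remains to identify the limit. Set $\bar u:=(\mathcal{M}+\mathcal{A})^{-1}\mathcal{C}w^*$, which is well defined because $\mathcal{C}^*$ is onto (as $\mathcal{M}$ has closed range, cf.~\cref{CholeskyDecomposition}) and $\mathcal{T}$ has full domain. Using $\widetilde{\mathcal{T}}=\mathcal{C}^*(\mathcal{M}+\mathcal{A})^{-1}\mathcal{C}$ and $w^*\in\Fix\widetilde{\mathcal{T}}$ one reads off $\mathcal{C}^*\bar u=\widetilde{\mathcal{T}}w^*=w^*$, whence $\mathcal{M}\bar u=\mathcal{C}\mathcal{C}^*\bar u=\mathcal{C}w^*$. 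On the other hand, the defining relation $\bar u=(\mathcal{M}+\mathcal{A})^{-1}\mathcal{C}w^*$ means $\mathcal{C}w^*-\mathcal{M}\bar u\in\mathcal{A}\bar u$; substituting $\mathcal{M}\bar u=\mathcal{C}w^*$ cancels the left-hand side to yield $0\in\mathcal{A}\bar u$, i.e.~$\bar u\in\zer\mathcal{A}=\Fix\mathcal{T}$, as claimed.

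The induction and the appeal to Krasnoselskii--Mann are routine; the delicate part is the final bookkeeping with $\mathcal{C}$ and $\mathcal{C}^*$ — specifically the step $\mathcal{C}^*\bar u=w^*$ and the ensuing cancellation $\mathcal{C}w^*-\mathcal{M}\bar u=0$ — since this is exactly what guarantees that the weak limit computed in the reduced space $\bD$ lifts to a genuine fixed point of the original degenerate iteration. I expect this identification of the limit to be the main (if modest) obstacle.
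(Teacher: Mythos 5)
Your proposal is correct and takes essentially the same approach as the paper: the identical induction based on $\widetilde{\mathcal{T}}=\mathcal{C}^*(\mathcal{M}+\mathcal{A})^{-1}\mathcal{C}$, the same reduction to the classical Krasnoselskii--Mann theorem on $\bD$ using the maximal monotonicity of $\mathcal{C}^*\rhd\mathcal{A}$ from \cref{TtildeTheorem}, and the same fixed-point transfer $\mathcal{C}^*\Fix\mathcal{T}\subseteq\Fix\widetilde{\mathcal{T}}$ to guarantee $\Fix\widetilde{\mathcal{T}}\neq\emptyset$. The only (cosmetic) difference is in identifying the limit: the paper verifies $\mathcal{T}\bar u=\bar u$ for $\bar u=(\mathcal{M}+\mathcal{A})^{-1}\mathcal{C}w^*$ by composing resolvents, whereas you unpack the inclusion $\mathcal{C}w^*-\mathcal{M}\bar u\in\mathcal{A}\bar u$ to conclude $0\in\mathcal{A}\bar u$ directly; both arguments are equivalent.
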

\begin{proof}
	First, we prove that $w^k=\mathcal{C}^*u^k$ for all $k\in \mathbb{N}$. The case $k=0$ holds by assumption and using \cref{TtildeDefinitionEquivalence}, we get inductively that
	\begin{align*}
	w^{k+1}&=w^k+\lambda_k \bigl(\mathcal{C}^*(\mathcal{M}+\mathcal{A})^{-1}\mathcal{C}w^k-w^k \bigr)\\
	&=\mathcal{C}^*u^k+\lambda_k \bigl(\mathcal{C}^*(\mathcal{M}+\mathcal{A})^{-1}\mathcal{C}\mathcal{C}^*u^k-\mathcal{C}^*u^k \bigr)\\
	&=\mathcal{C}^{*} \bigl(u^k+\lambda_k \bigl( (\mathcal{M}+\mathcal{A})^{-1}\mathcal{M}u^k-u^k \bigr) \bigr)
	=\mathcal{C}^*u^{k+1}.
	\end{align*}
	To establish convergence of the reduced algorithm, since $\mathcal{C}^*\rhd \mathcal{A}$ is maximal monotone by \cref{TtildeTheorem}, we only need to prove that $\widetilde{\mathcal{T}}$ has fixed points (see for example \cite[Corollary 5.17]{BCombettes}). We actually have that $\mathcal{C}^*\Fix\mathcal{T}=\Fix \widetilde{\mathcal{T}}$. Indeed, given $u \in \Fix\mathcal{T}$ then $w=\mathcal{C}^*u$ is a fixed point of $\widetilde{\mathcal{T}}$, in fact, 
	\begin{equation*}
	\widetilde{\mathcal{T}}w=\mathcal{C}^*\left(\mathcal{M}+\mathcal{A}\right)^{-1}\mathcal{C}w=\mathcal{C}^*\left(\mathcal{M}+\mathcal{A}\right)^{-1}\mathcal{C}\mathcal{C}^*u=\mathcal{C}^*\mathcal{T}u=\mathcal{C}^*u=w.
	\end{equation*}
	Conversely, if $w$ is a fixed point of $\widetilde{\mathcal{T}}$ then to retrieve a fixed point of $\mathcal{T}$ we take $u:=\left( \mathcal{M}+\mathcal{A}  \right)^{-1}\mathcal{C}w$. Clearly $w=\mathcal{C}^*u$ and $u$ is a fixed point of $\mathcal{T}$ since
	\begin{equation*}
	\mathcal{T}u=\left( \mathcal{M}+\mathcal{A}  \right)^{-1}\mathcal{C}\mathcal{C}^*\left( \mathcal{M}+\mathcal{A}  \right)^{-1}\mathcal{C}w=\left( \mathcal{M}+\mathcal{A}  \right)^{-1}\mathcal{C}w=u.
	\end{equation*}
	Since $\Fix\mathcal{T}=\zer \mathcal{A}$, which is assumed non-empty, we have the desired result. 
\end{proof}
\begin{corollary}\label{remark:samesolution}
Let $\mathcal{A}:\bH\to2^{\bH}$ with $\zer \mathcal{A}\neq \emptyset$ be a maximal monotone operator and $\mathcal{M}:\bH\to\bH$ be an admissible preconditioner with closed range such that $(\mathcal{M}+\mathcal{A})^{-1}$ is Lipschitz. Let $\mathcal{M}=\mathcal{C}\mathcal{C}^*$ be a decomposition of $\mathcal{M}$ according to \cref{CholeskyDecomposition} with $\mathcal{C}:\bD\to\bH$ and denote by $\{w^k\}_k$ the sequence generated by \textsf{rPPP} from \cref{ReducedAlg}. Then $\{w^k\}_k$ converges weakly to a point $w^* \in \Fix \widetilde{\mathcal{T}}$ such that $u^*:=(\mathcal{M}+\mathcal{A})^{-1}\mathcal{C}w^* \in \Fix \mathcal{T}$. Moreover, $\{(\mathcal{M}+\mathcal{A})^{-1}\mathcal{C}w^k\}_k$ weakly converges to $u^*$.
\end{corollary}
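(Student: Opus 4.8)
The plan is to combine the reduction result \cref{ReductionAndConvergence} with the original-space convergence \cref{PPPwithmaxmon}, and then to reconcile the two weak limits they produce. First I would record that, since $\mathcal{A}$ is maximal monotone, it is in particular monotone, so $\mathcal{A}\cap(\bH\times\Image\mathcal{M})$ is monotone and \cref{prop:MMaximalMonotonicityProp} gives that $\mathcal{M}^{-1}\mathcal{A}$ is $\mathcal{M}$-monotone. Hence all hypotheses of \cref{ReductionAndConvergence} are satisfied, and that theorem yields at once the identity $w^k=\mathcal{C}^*u^k$ for every $k$, the weak convergence $w^k\rightharpoonup w^*$ in $\bD$ to some $w^*\in\Fix\widetilde{\mathcal{T}}$, and the membership $u^*:=(\mathcal{M}+\mathcal{A})^{-1}\mathcal{C}w^*\in\Fix\mathcal{T}$. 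This settles the first assertion.

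For the \emph{moreover} part, the starting observation is the identity
\[
(\mathcal{M}+\mathcal{A})^{-1}\mathcal{C}w^k=(\mathcal{M}+\mathcal{A})^{-1}\mathcal{C}\mathcal{C}^*u^k=(\mathcal{M}+\mathcal{A})^{-1}\mathcal{M}u^k=\mathcal{T}u^k,
\]
valid because $w^k=\mathcal{C}^*u^k$ and $\mathcal{M}=\mathcal{C}\mathcal{C}^*$. Since $\mathcal{A}$ is maximal monotone and $(\mathcal{M}+\mathcal{A})^{-1}$ is Lipschitz, \cref{PPPwithmaxmon} applies to the generating sequence $\{u^k\}_k$ and guarantees that $\{\mathcal{T}u^k\}_k$ converges weakly in $\bH$ to some $\bar u\in\Fix\mathcal{T}$. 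Through the displayed identity, this is exactly the weak convergence of $\{(\mathcal{M}+\mathcal{A})^{-1}\mathcal{C}w^k\}_k$ to $\bar u$, so it only remains to prove $\bar u=u^*$.

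The main obstacle is precisely this identification of the limit: the map $(\mathcal{M}+\mathcal{A})^{-1}$ is merely Lipschitz, hence strongly continuous, so one cannot pass the weak limit $w^k\rightharpoonup w^*$ directly through $(\mathcal{M}+\mathcal{A})^{-1}\mathcal{C}$ to conclude $\bar u=(\mathcal{M}+\mathcal{A})^{-1}\mathcal{C}w^*$ for free. To circumvent this I would project back to the reduced space via the (weakly continuous) bounded linear operator $\mathcal{C}^*$. On one hand, applying $\mathcal{C}^*$ to $\mathcal{T}u^k\rightharpoonup\bar u$ gives $\mathcal{C}^*\mathcal{T}u^k\rightharpoonup\mathcal{C}^*\bar u$. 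On the other hand, \cref{TtildeDefinitionEquivalence} yields $\mathcal{C}^*\mathcal{T}u^k=\mathcal{C}^*(\mathcal{M}+\mathcal{A})^{-1}\mathcal{C}\mathcal{C}^*u^k=\widetilde{\mathcal{T}}w^k$; since $\widetilde{\mathcal{T}}$ is firmly non-expansive by \cref{TtildeTheorem}, the reduced iteration \cref{ReducedAlg} is asymptotically regular (the argument of \cref{lem:MfejerMon} with the genuine norm of $\bD$ applies), so $\widetilde{\mathcal{T}}w^k-w^k\to0$ strongly and, together with $w^k\rightharpoonup w^*$, this gives $\widetilde{\mathcal{T}}w^k\rightharpoonup w^*$. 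Uniqueness of the weak limit then forces $\mathcal{C}^*\bar u=w^*$.

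Finally, I would exploit that $\bar u$ is a fixed point of $\mathcal{T}$, which sees its argument only through $\mathcal{M}=\mathcal{C}\mathcal{C}^*$, to close the argument:
\[
\bar u=\mathcal{T}\bar u=(\mathcal{M}+\mathcal{A})^{-1}\mathcal{M}\bar u=(\mathcal{M}+\mathcal{A})^{-1}\mathcal{C}\mathcal{C}^*\bar u=(\mathcal{M}+\mathcal{A})^{-1}\mathcal{C}w^*=u^*.
\]
This identifies the weak limit of $\{(\mathcal{M}+\mathcal{A})^{-1}\mathcal{C}w^k\}_k$ as $u^*$ and completes the proof.
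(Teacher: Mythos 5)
Your proposal is correct and follows essentially the same route as the paper's own proof: reduce to identifying the weak limit $\bar u$ of $\{\mathcal{T}u^k\}_k$ from \cref{PPPwithmaxmon} with $u^*$, push the convergence through the bounded linear map $\mathcal{C}^*$ using $\mathcal{C}^*\mathcal{T}u^k=\widetilde{\mathcal{T}}w^k$ and the asymptotic regularity of the reduced iteration to get $\mathcal{C}^*\bar u=w^*$, and conclude via $\bar u=\mathcal{T}\bar u=(\mathcal{M}+\mathcal{A})^{-1}\mathcal{C}w^*=u^*$. The only cosmetic difference is that you justify the asymptotic regularity $\|\widetilde{\mathcal{T}}w^k-w^k\|\to 0$ by rerunning the argument of \cref{lem:MfejerMon} in the norm of $\bD$, while the paper simply cites the standard Krasnoselskii--Mann result.
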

\begin{proof}
	Thanks to \cref{ReductionAndConvergence} and \cref{PPPwithmaxmon}, we only need to prove that $\{\mathcal{T}u^k\}_k$ converges weakly to $u^{*}$ (notice that $\mathcal{T}u^k =(\mathcal{M}+\mathcal{A})^{-1}\mathcal{C}w^k$ for all $k$), where $\{u^k\}_k$ is the correspondent \textsf{PPP} sequence. Let $u^{**}\in \Fix \mathcal{T}$ be the weak limit of $\{\mathcal{T}u^k\}_k$ according to \cref{PPPwithmaxmon}. Since $\mathcal{C}^*$ is a linear bounded map, $\mathcal{C}^*u^{**}$ is the weak limit of $\mathcal{C}^*\mathcal{T}u^k=\widetilde{\mathcal{T}}w^k$. Furthermore, since $\|\widetilde{\mathcal{T}}w^k - w^k\|\to 0$ (cf. \cite[Corollary 5.17 (ii)]{BCombettes}), we get $w^*=\mathcal{C}^*u^{**}$. Thus, $u^*=(\mathcal{M}+\mathcal{A})^{-1}\mathcal{C}w^*=\mathcal{T}u^{**}=u^{**}$. 
\end{proof}
\begin{remark}\label{remark:quotient} If we restrict $\mathcal{C}^*$ to $\Image\mathcal{M} = \ker \mathcal{M}^{\perp}$, since by \cref{CholeskyDecomposition} the operator $\mathcal{C}^*$ is onto and $\ker \mathcal{C}^*=\ker\mathcal{M}$, the resulting map is a bijection between $\Image\mathcal{M}$ and $\bD$. Furthermore, if on $\Image\mathcal{M}$ we consider the Hilbert space structure given by the $\mathcal{M}$-scalar product, then it is easy to check that $\mathcal{C}^*|_\mathcal{\Image\mathcal{M}}:\Image\mathcal{M}\to \bD$ is actually an isometric isomorphism. For this reason, when $\mathcal{M}$ is not onto, $\bD$ could be thought as a strictly smaller space than $\bH$.
\end{remark}

\subsection{Linear convergence}\label{sec:LinearConvergence}
Let $\mathcal{A}: \bH\to 2^{\bH}$ be an operator and $\mathcal{M}$ be an admissible preconditioner with closed range, and let $\mathcal{M}=\mathcal{C}\mathcal{C}^*$ be a decomposition of $\mathcal{M}$ according to \cref{CholeskyDecomposition}. Having at hand an explicit characterization of the reduced algorithm, we may wonder which conditions on $\mathcal{A}$ should be imposed to have a strongly monotone reduced operator, as this would imply that $\widetilde{\mathcal{T}}$ is a Banach contraction \cite[Proposition 23.13]{BCombettes}, and we could therefore conclude the linear convergence of the reduced algorithm, at least if $\lambda_k$ is constant. To this end, as it will become clearer afterwards, we introduce the analogue of the strong-monotonicity (see \cite[Definition 22.1]{BCombettes}) adapted in our degenerate case, that is:
\begin{defi}[$\mathcal{M}$-strong-monotonicity] Let $\mathcal{M}:\bH\to\bH$ be a bounded linear positive semi-definite operator. Then $\mathcal{B}:\bH\to 2^{\bH}$ is $\mathcal{M}$-$\alpha$-strongly monotone if
        \begin{equation}\label{eq:AtildeCocoercive}
        \langle v-v',u-u'\rangle_{\mathcal{M}}\geq \alpha \|u-u'\|_{\mathcal{M}}^2 \quad \text{for all} \ (u,v),(u',v')\in \mathcal{B}.
        \end{equation}    
\end{defi}

As in \cref{prop:MMaximalMonotonicityProp} we have a characterization of the $\mathcal{M}$-strong monotonicity of $\mathcal{M}^{-1}\mathcal{A}$ in terms of a weaker notion of strong monotonicity for $\mathcal{A}\cap(\bH\times \Image\mathcal{M})$, which will be useful in applications.
\begin{prop}
Let $\mathcal{A}:\bH\to 2^{\bH}$ be an operator and $\mathcal{M}:\bH\to \bH$ be a positive semi-definite operator, then $\mathcal{M}^{-1}\mathcal{A}$ is $\mathcal{M}$-$\alpha$-strongly monotone if and only if 
\begin{equation}\label{eq:AtildeCocoercive2} 
\langle v-v',u-u'\rangle \geq \alpha \|u-u'\|^2_{\mathcal{M}}\quad \text{for all} \ (u,v),(u',v')\in\mathcal{A}\cap( \bH\times\Image \mathcal{M}).
\end{equation}
\end{prop}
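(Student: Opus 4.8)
The plan is to follow exactly the template used for the characterization of $\mathcal{M}$-monotonicity in \cref{prop:MMaximalMonotonicityProp}, since the only novelty is that a strong-monotonicity constant $\alpha$ now sits on the right-hand side of the defining inequality. The engine of the argument is the chain of equivalences collected in \cref{M-1AeA}, which tells us that $(u,v)\in\mathcal{M}^{-1}\mathcal{A}$ if and only if $(u,\mathcal{M}v)\in\mathcal{A}\cap(\bH\times\Image\mathcal{M})$. The idea is that this bijective dictionary between pairs in $\mathcal{M}^{-1}\mathcal{A}$ and pairs in $\mathcal{A}\cap(\bH\times\Image\mathcal{M})$ transports the defining inequality of $\mathcal{M}$-$\alpha$-strong monotonicity into \cref{eq:AtildeCocoercive2}, and conversely, using the elementary identity $\langle v-v',u-u'\rangle_{\mathcal{M}}=\langle\mathcal{M}(v-v'),u-u'\rangle=\langle\mathcal{M}v-\mathcal{M}v',u-u'\rangle$.

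First I would prove the direction ``$\mathcal{M}^{-1}\mathcal{A}$ is $\mathcal{M}$-$\alpha$-strongly monotone $\Rightarrow$ \cref{eq:AtildeCocoercive2}''. I take two pairs $(u,w),(u',w')\in\mathcal{A}\cap(\bH\times\Image\mathcal{M})$. Because $w,w'\in\Image\mathcal{M}$, I may choose preimages $v,v'\in\bH$ with $\mathcal{M}v=w$ and $\mathcal{M}v'=w'$; by \cref{M-1AeA} the pairs $(u,v),(u',v')$ then belong to $\mathcal{M}^{-1}\mathcal{A}$. Applying the assumed $\mathcal{M}$-$\alpha$-strong monotonicity to these pairs and rewriting $\langle v-v',u-u'\rangle_{\mathcal{M}}=\langle w-w',u-u'\rangle$ yields \cref{eq:AtildeCocoercive2} at once. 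For the reverse implication I would run the same computation backwards: given $(u,v),(u',v')\in\mathcal{M}^{-1}\mathcal{A}$, the equivalence \cref{M-1AeA} places $(u,\mathcal{M}v),(u',\mathcal{M}v')$ in $\mathcal{A}\cap(\bH\times\Image\mathcal{M})$, so \cref{eq:AtildeCocoercive2} gives $\langle\mathcal{M}v-\mathcal{M}v',u-u'\rangle\geq\alpha\|u-u'\|^2_{\mathcal{M}}$, which is precisely the defining inequality of $\mathcal{M}$-$\alpha$-strong monotonicity of $\mathcal{M}^{-1}\mathcal{A}$.

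Since the proof is essentially bookkeeping, the only point requiring care — rather than a genuine obstacle — is to observe that the right-hand side is left untouched by the translation: both inequalities carry the same term $\alpha\|u-u'\|^2_{\mathcal{M}}$ depending only on $u-u'$ and not on the dual variable, which is exactly why the constant $\alpha$ passes through verbatim, whereas in \cref{prop:MMaximalMonotonicityProp} the right-hand side was simply zero. I would also note that the surjectivity step in the forward direction only uses $w,w'\in\Image\mathcal{M}$, so no assumption beyond positive semi-definiteness of $\mathcal{M}$ (in particular, no closed-range hypothesis) is needed.
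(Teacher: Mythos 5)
Your proof is correct and follows essentially the same route as the paper's: both arguments rest on the dictionary \cref{M-1AeA} between pairs of $\mathcal{M}^{-1}\mathcal{A}$ and pairs of $\mathcal{A}\cap(\bH\times\Image\mathcal{M})$, together with the identity $\langle v-v',u-u'\rangle_{\mathcal{M}}=\langle\mathcal{M}v-\mathcal{M}v',u-u'\rangle$. The paper merely compresses your two directions into a single chain of equivalences of ``for all'' statements, so the content is identical.
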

\begin{proof}
    Using \cref{M-1AeA}, we have the following equivalences
    \begin{align*}
    \langle v-v',&u-u'\rangle \geq \alpha \|u-u'\|^2_{\mathcal{M}}\quad \text{for all} \  (u,v),(u',v')\in\mathcal{A}\cap( \bH\times\Image \mathcal{M})\\
    \iff &\langle \mathcal{M}\tilde{v}-\mathcal{M}\tilde{v}',u-u'\rangle \geq \alpha \|u-u'\|^2_{\mathcal{M}}\quad \text{for all} \  (u,\tilde{v}),(u',\tilde{v}')\in\mathcal{M}^{-1}\mathcal{A}\\
    \iff &\langle \tilde{v}-\tilde{v}',u-u'\rangle_\mathcal{M} \geq \alpha \|u-u'\|^2_{\mathcal{M}}\quad \text{for all} \  (u,\tilde{v}),(u',\tilde{v}')\in\mathcal{M}^{-1}\mathcal{A},
    \end{align*}
    which yields the thesis.
\end{proof}
The following theorem gives the right conditions we are looking for.
\begin{theorem}\label{StrongConvergence}
 Let $\mathcal{A}:\bH\to 2^{\bH}$ be an operator and $\mathcal{M}$ be an admissible preconditioner. Then $\mathcal{M}^{-1}\mathcal{A}$ is $\mathcal{M}$-$\alpha$-strongly monotone with $\alpha>0$ if and only if $\mathcal{C}^*\rhd\mathcal{A}$ is $\alpha$-strongly monotone. In that case, $\widetilde{\mathcal{T}}$ is a contraction of factor $1/(1+\alpha)$.
\end{theorem}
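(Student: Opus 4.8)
The plan is to reduce both claims to the push-forward characterization of $\mathcal{C}^*\rhd\mathcal{A}$ provided by \cref{MonotoneOperatorsPushForward}, together with the two transport identities coming from $\mathcal{M}=\mathcal{C}\mathcal{C}^*$, namely $\|\mathcal{C}^*u\|^2=\langle\mathcal{C}\mathcal{C}^*u,u\rangle=\|u\|_\mathcal{M}^2$ and its polarization $\langle\mathcal{C}^*u,\mathcal{C}^*u'\rangle=\langle u,u'\rangle_\mathcal{M}$. These identities convert the $\alpha$-strong monotonicity inequality on $\bD$ into the $\mathcal{M}$-$\alpha$-strong monotonicity inequality on $\bH$ termwise, in exactly the same way the plain monotonicity was transported inside the proof of \cref{TtildeTheorem}.

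For the equivalence I would argue through pairs. Given $(u,v),(u',v')\in\mathcal{M}^{-1}\mathcal{A}$, setting $x=\mathcal{C}^*u$, $y=\mathcal{C}^*v$ (and primes analogously) produces $(x,y),(x',y')\in\mathcal{C}^*\rhd\mathcal{A}$, and the transport identities give
\[
\langle y-y',x-x'\rangle=\langle v-v',u-u'\rangle_\mathcal{M},\qquad \|x-x'\|^2=\|u-u'\|_\mathcal{M}^2.
\]
Reading these in one direction shows that $\alpha$-strong monotonicity of $\mathcal{C}^*\rhd\mathcal{A}$ forces $\mathcal{M}$-$\alpha$-strong monotonicity of $\mathcal{M}^{-1}\mathcal{A}$. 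For the converse I would invoke the surjectivity part of \cref{MonotoneOperatorsPushForward}: every pair of $\mathcal{C}^*\rhd\mathcal{A}$ is of the form $(\mathcal{C}^*u,\mathcal{C}^*v)$ with $(u,v)\in\mathcal{M}^{-1}\mathcal{A}$, so the same two identities transport the inequality back. The only delicate point is matching quantifiers through the non-injective map $u\mapsto\mathcal{C}^*u$; this is precisely resolved by the fact that \cref{MonotoneOperatorsPushForward} guarantees all pairs of $\mathcal{C}^*\rhd\mathcal{A}$ are covered.

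For the contraction statement, I would first note that $\alpha>0$ makes $\mathcal{C}^*\rhd\mathcal{A}$ in particular monotone, so $\mathcal{M}^{-1}\mathcal{A}$ is $\mathcal{M}$-monotone and \cref{TtildeTheorem} applies, giving that $\widetilde{\mathcal{T}}=(I+\mathcal{C}^*\rhd\mathcal{A})^{-1}$ is single-valued with full domain on $\bD$. Then I would run the classical resolvent estimate for strongly monotone operators: for $w_1,w_2\in\bD$ put $p_i=\widetilde{\mathcal{T}}w_i$, so that $w_i-p_i\in(\mathcal{C}^*\rhd\mathcal{A})p_i$; subtracting and pairing with $p_1-p_2$, the $\alpha$-strong monotonicity yields
\[
\langle w_1-w_2,p_1-p_2\rangle\geq(1+\alpha)\|p_1-p_2\|^2,
\]
and Cauchy--Schwarz gives $\|p_1-p_2\|\leq\tfrac{1}{1+\alpha}\|w_1-w_2\|$, i.e.\ the contraction factor $1/(1+\alpha)$.

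I expect the equivalence to be essentially bookkeeping once the two transport identities are written down; the only (very mild) obstacle is the careful handling of the quantifiers across the non-injective push-forward, which \cref{MonotoneOperatorsPushForward} settles. The contraction part is then routine and formally identical to the non-degenerate strongly monotone case.
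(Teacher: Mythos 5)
Your proposal is correct and follows essentially the same route as the paper: both rest on the push-forward characterization in \cref{MonotoneOperatorsPushForward} together with the transport identities $\langle\mathcal{C}^*a,\mathcal{C}^*b\rangle=\langle a,b\rangle_\mathcal{M}$ and $\|\mathcal{C}^*a\|^2=\|a\|_\mathcal{M}^2$, which turn the two strong-monotonicity inequalities into one another (the paper writes out only the direction needed for the contraction and leaves the converse implicit, exactly as your quantifier discussion handles it). The only cosmetic difference is that for the final claim the paper cites \cite[Proposition 23.13]{BCombettes}, whereas you reprove that standard resolvent estimate inline; both are valid.
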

\begin{proof}
Let $(w_1,w_1'),\ (w_2,w_2')\in \mathcal{C}^*\rhd \mathcal{A}$. By \cref{MonotoneOperatorsPushForward} there exist $(u_1,u_1')$, $(u_2, u_2') \in \mathcal{M}^{-1}\mathcal{A}$ such that $(\mathcal{C}^*u_i, \mathcal{C}^*u_i')=(w_i,w_i')$ for $i =1,2$. Thus, using the $\mathcal{M}$-strong monotonicity of $\mathcal{M}^{-1}\mathcal{A}$, we have
\begin{align*}
    \langle w_1'-w_2',w_1-w_2\rangle &= \langle \mathcal{C}^*u_1'-\mathcal{C}^*u_2',\mathcal{C}^*u_1-\mathcal{C}^*u_2\rangle = \langle u_1'-u_2',u_1-u_2\rangle_\mathcal{M}\\
    & \geq \alpha \|u_1-u_2\|_\mathcal{M}^2=\alpha \|\mathcal{C}^*u_1-\mathcal{C}^*u_2\|^2= \alpha \|w_1-w_2\|^2.
\end{align*}
By \cite[Proposition 23.13]{BCombettes}, $\widetilde{\mathcal{T}}$ is a Banach contraction with constant $1/(1+\alpha)$.
\end{proof}
\begin{remark}
    It is clear that condition \cref{eq:AtildeCocoercive}, which is equivalent to \cref{eq:AtildeCocoercive2}, is in general weaker than the strong monotonicity of $\mathcal{A}\cap (\bH\times \Image\mathcal{M})$. We stress that \cref{eq:AtildeCocoercive2} is also quite easy to check in some practical cases, see \cref{sec:applications}.
\end{remark}
\begin{corollary}
    If $\mathcal{A}$ is $\alpha \|\mathcal{C}\|^2$-strongly monotone and $\zer \mathcal{A} \neq \emptyset$, then $\widetilde{\mathcal{T}}$ is a contraction of factor $1/(1+\alpha)$.
\end{corollary}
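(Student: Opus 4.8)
The plan is to reduce this corollary to \cref{StrongConvergence} by showing that the hypothesis---namely, that $\mathcal{A}$ is $\alpha\|\mathcal{C}\|^2$-strongly monotone---implies that $\mathcal{M}^{-1}\mathcal{A}$ is $\mathcal{M}$-$\alpha$-strongly monotone. Once this implication is established, \cref{StrongConvergence} immediately yields that $\widetilde{\mathcal{T}}$ is a Banach contraction with factor $1/(1+\alpha)$, which is exactly the conclusion. So the entire content of the proof lies in verifying the single inequality \cref{eq:AtildeCocoercive2}, or equivalently \cref{eq:AtildeCocoercive}, from ordinary strong monotonicity of $\mathcal{A}$.

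First I would take any pair $(u,v),(u',v')\in\mathcal{A}\cap(\bH\times\Image\mathcal{M})$ and use the assumed $\alpha\|\mathcal{C}\|^2$-strong monotonicity of $\mathcal{A}$ to write
\begin{equation*}
\langle v-v',u-u'\rangle \geq \alpha\|\mathcal{C}\|^2\,\|u-u'\|^2.
\end{equation*}
The key estimate is then to bound $\|u-u'\|_{\mathcal{M}}^2$ from above by $\|\mathcal{C}\|^2\|u-u'\|^2$. This follows from the decomposition $\mathcal{M}=\mathcal{C}\mathcal{C}^*$ of \cref{CholeskyDecomposition}, since for any $h\in\bH$,
\begin{equation*}
\|h\|_{\mathcal{M}}^2=\langle\mathcal{M}h,h\rangle=\langle\mathcal{C}\mathcal{C}^*h,h\rangle=\|\mathcal{C}^*h\|^2\leq\|\mathcal{C}^*\|^2\|h\|^2=\|\mathcal{C}\|^2\|h\|^2,
\end{equation*}
where I used $\|\mathcal{C}^*\|=\|\mathcal{C}\|$. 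Chaining the two displays gives $\langle v-v',u-u'\rangle\geq\alpha\,\|u-u'\|_{\mathcal{M}}^2$, which is precisely condition \cref{eq:AtildeCocoercive2}. By the proposition preceding \cref{StrongConvergence}, this is equivalent to the $\mathcal{M}$-$\alpha$-strong monotonicity of $\mathcal{M}^{-1}\mathcal{A}$.

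With $\mathcal{M}^{-1}\mathcal{A}$ shown to be $\mathcal{M}$-$\alpha$-strongly monotone (and $\alpha>0$), I would invoke \cref{StrongConvergence} to conclude that $\mathcal{C}^*\rhd\mathcal{A}$ is $\alpha$-strongly monotone and that $\widetilde{\mathcal{T}}$ is a contraction of factor $1/(1+\alpha)$. I do not expect any genuine obstacle here: the only subtle point is getting the scaling constant right, i.e.~recognizing that strong monotonicity with modulus $\alpha\|\mathcal{C}\|^2$ in the ambient norm is exactly what is needed to produce modulus $\alpha$ relative to the $\mathcal{M}$-seminorm, and this is dictated by the operator-norm bound $\|h\|_{\mathcal{M}}\leq\|\mathcal{C}\|\,\|h\|$. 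One should note that the strong monotonicity of $\mathcal{A}$ on all of $\bH$ is a stronger---and hence sufficient---assumption than \cref{eq:AtildeCocoercive2}, which only requires the inequality on $\mathcal{A}\cap(\bH\times\Image\mathcal{M})$; restricting to this subset is harmless since the estimate holds a fortiori.
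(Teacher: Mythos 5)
Your proof is correct and follows essentially the same route as the paper: both establish condition \cref{eq:AtildeCocoercive2} from the assumed strong monotonicity via the estimate $\|u-u'\|_{\mathcal{M}}^2=\|\mathcal{C}^*(u-u')\|^2\leq\|\mathcal{C}\|^2\|u-u'\|^2$ and then invoke \cref{StrongConvergence}. Your write-up is merely more explicit about the norm bound $\|\mathcal{C}^*\|=\|\mathcal{C}\|$ and about why restricting to $\mathcal{A}\cap(\bH\times\Image\mathcal{M})$ is harmless, both of which the paper leaves implicit.
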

\begin{proof} We have for all $(u,v), \ (u',v')\in\mathcal{A}$ that
\begin{equation} \langle v-v',u-u'\rangle \geq \alpha \|\mathcal{C}\|^2 \|u-u'\|^2\geq \alpha \|\mathcal{C}^*(u-u')\|^2_{\mathcal{M}}=\alpha \|u-u'\|^2_{\mathcal{M}}, \end{equation}
    which implies \cref{eq:AtildeCocoercive}.
\end{proof}

\section{Application to splitting algorithms}\label{sec:applications}
Now we consider problems of the form \cref{sum}, i.e.,~an inclusion for the sum of $(N+1)$ maximal monotone operators $A_{i}$ and we assume that each $A_i$ is \textit{simple}, i.e.,~we are able to cheaply compute the resolvent $J_{A_i}$ for each $i \in \{0,\dots,N\}$. The strategy we focus on consists in reformulating \cref{sum} as an inclusion problem \cref{monotoneinclusionIntro} in a proper product space and applying a degenerate preconditioned proximal point algorithm.

As we illustrate now, there are many ways to broadcast problem \cref{sum} as the inclusion problem \cref{monotoneinclusionIntro}, generating a wide range of (known and new) splitting algorithms.

\paragraph{Chambolle-Pock} As firstly noticed in \cite{HeYuan}, one of the most popular splitting algorithm that can be seen as a \textsf{PPP} method is the so-called \textit{Chambolle-Pock} (\textsf{CP}) scheme, which aims to solve the following problem
\begin{equation}\label{convexproblem}
    \min_{x\in \mathcal{H}} f(x)+g(Lx)
\end{equation}
where $f:\mathcal{H}\to \mathbb{R}\cup \{+\infty\},~g:\mathcal{K}\to \mathbb{R}\cup \{+\infty\}$ are convex, lower semicontinuous functions, $\mathcal{H}$ and $\mathcal{K}$ are real Hilbert spaces and $L\in \mathcal{B}(\mathcal{K},\mathcal{H})$. From a monotone operator standpoint, problem \cref{convexproblem} can be formulated as
\begin{equation}\label{ChambollePock}
\text{find x} \ \in \mathcal{H} \ \text{such that:} \ 0 \in (A+L^*BL)x
\end{equation}
where $A=\partial f$ and $B=\partial g$ are maximal monotone operators on $\mathcal{H}$ and $\mathcal{K}$ respectively. Following \cite{HeYuan} we reformulate this problem in $\bH:=\mathcal{H}\times \mathcal{K}$ introducing $\mathcal{A}$ and $\mathcal{M}$ defined by
\begin{equation*}
\mathcal{A}:= 
\begin{bmatrix}
A & L^* \\
-L & B^{-1}
\end{bmatrix},
\quad \mathcal{M}:= 
\begin{bmatrix}
\frac{1}{\tau}I & -L^* \\
-L & \frac{1}{\sigma}I
\end{bmatrix}.
\end{equation*}
 Let $u:=(x,y)\in \bH$, the inclusion problem $0 \in \mathcal{A}u$ in $\bH$ is indeed equivalent to \cref{ChambollePock}. The \textsf{PPP} iteration \cref{eq:PPP} with $\lambda_k=1$ and starting point $(x^0,y^0)\in \bH$ hence writes as
\begin{equation}\label{ChambollePockAlg}
  \begin{cases}
    x^{k+1} = J_{\tau A}( x^k-\tau L^* y^{k}),\\
    y^{k+1} = J_{\sigma B^{-1}}\left(y^k + \sigma L(2x^{k+1}-x^k)\right).
  \end{cases}
\end{equation}
We focus on the degenerate case $\tau\sigma \|L\|^2=1$ (since the case $\tau\sigma \|L\|^2<1$ induces a positive definite preconditioner). The operator $\mathcal{A}$ is maximal monotone on $\bH$ since it is the sum of the maximal monotone operator $(x,y)\mapsto Ax \times B^{-1}y$
(which is maximal monotone since $A$ and $B^{-1}$ are maximal monotone \cite[Proposition 20.23]{BCombettes}) and the operator $(x,y)\mapsto (L^* y, -Lx)$ (which is maximal monotone, since it is skew symmetric, linear and bounded). Additionally $(\mathcal{M}+\mathcal{A})^{-1}$ is Lipschitz, since
\begin{equation}\label{ChambollePockG}
 \left(\mathcal{M}+\mathcal{A}\right)^{-1}
 : \ (u_1, u_2) \mapsto
 \left(J_{\tau A}(\tau u_1), J_{\sigma B^{-1}}(2\sigma LJ_{\tau A}(\tau u_1)+ \sigma u_2)\right).
\end{equation}
which is a composition of Lipschitz functions. The Lipschitz constant is not relevant in this context, in fact, we can already conclude using \cref{PPPwithmaxmon} that even if $\tau\sigma\|L\|^2=1$, the \textsf{CP} algorithm, i.e.,~iteration \cref{ChambollePockAlg}, converges weakly to a solution as soon as \cref{ChambollePock} has a solution. This result has been proven in \cite{OConnor2020OnTE} in a finite dimensional setting and more recently in \cite[Section 5]{condat:hal-00609728}, reformulating the so-called \textit{Generalized} \textsf{CP} (which actually includes \textsf{CP} as a particular case) as an instance of \textsf{DRS} in a modified space and relying on existing proofs of convergence for \textsf{DRS}, namely \cite[Theorem 26.11]{BCombettes}, that exploits the particular structure of the problem.
\begin{remark}
	In this case, the reduction of variables in the sense of \cref{ReducedAlg} could not easily be established in general. Indeed, in an infinite dimensional setting it is not clear if $\mathcal{M}$ has necessarily closed range or not. Moreover, the decomposition of $\mathcal{M}$ according to \cref{CholeskyDecomposition} is not explicit and the space $\bD$ is not necessarily much smaller than $\bH$.
\end{remark}

\paragraph{Douglas-Rachford}    Let $A$ and $B$ be two maximal monotone operators on $\mathcal{H}$. The \textsf{DRS} method can be introduced as a particular case of \textsf{CP} with $L=I$ and stepsize $\tau=\sigma=1$ for solving
\begin{equation}\label{sumtwo}
\text{find} \ x \in \mathcal{H} \ \text{such that:} \ 0 \in Ax+Bx
\end{equation}
scaled by $\sigma > 0$. As anticipated in the introduction, \textsf{DRS} admits a genuinely degenerate \textsf{PPP} formulation on $\bH=\mathcal{H}^2$, with $\mathcal{M},~\mathcal{A}$ according to \cref{DouglasRachfordSettingIntro}, and \textsf{PPP} iteration \cref{eq:PPP} with $\lambda_k=1$ according to \cref{intro:DRSasPPP}.

In this case, the decomposition of $\mathcal{M}$ according to \cref{CholeskyDecomposition} is explicit, we have $\mathcal{C}^*=\begin{bmatrix}
I & -I
\end{bmatrix}$, with $\bD=\mathcal{H}$, which gives exactly the same substitution we performed in the introduction. Since $\mathcal{M}$ has closed range, this leads us to the reduced algorithm
\begin{equation}\label{DRS}
w^{k+1}:=\left(I+\mathcal{C}^*\rhd \mathcal{A}\right)^{-1}w^k=w^k+J_{\sigma B}(2J_{\sigma A}w^k-w^k)-J_{\sigma A}w^k,
\end{equation}
that is \textsf{DRS} according to \cref{intro:DRSasRPPP}. Notice that we only need to store one variable (instead of two).

\textsf{DRS} is known to be a proximal point algorithm since the work of Eckstein and Bertsekas \cite{EcksteinBertsekas}, but no link between the operator $\mathcal{S}_{A,B}$ from \cite{EcksteinBertsekas} and the one in \cref{DouglasRachfordSettingIntro} has been observed so far. We have, in fact, that $\mathcal{S}_{A,B}$ characterized in \cite{EcksteinBertsekas} coincides exactly with $\mathcal{C}^*\rhd \mathcal{A}$. 

While the convergence of the reduced method \cref{DRS}, in case \cref{sumtwo} has a solution, has been established in the seminal paper \cite{Lions1979SplittingAF} already, the convergence of the non-reduced method (in our notation: the convergence of the \textsf{PPP} method \cref{eq:PPP} for $\mathcal{M}$ and $\mathcal{A}$ according to \cref{DouglasRachfordSettingIntro}), and in particular the convergence of the sequence $\{x^k\}_k,~x^k = J_{\sigma A}w^k$, has been open for a longer time and has been settled more recently in~\cite{SVAITER2011} and, afterwards in \cite{Davis2017} for a more general framework. Using the tools we developed so far, we can conclude the same results in an arguably easier way following from our general framework on degenerate and reduced proximal point iterations. In fact (since we are in a particular case of the \textsf{CP} algorithm), $\mathcal{A}$ is maximal monotone and $(\mathcal{M}+\mathcal{A})^{-1}$ is Lipschitz, and by \cref{remark:samesolution}, we can conclude that the \textsf{PPP} iterates generated by \cref{DouglasRachfordSettingIntro} converge weakly in $\bH$ to a zero of $\mathcal{A}$. In particular, if $w^k$ is generated by \textsf{DRS}, then $x^k:=J_{\sigma A}w^k$ converges weakly in $\mathcal{H}$ to a solution of \cref{sumtwo}.

Exploiting our multivalued operator framework, in particular \cref{StrongConvergence}, we can also shed light on recent results about the strong convergence of \textsf{DRS} \cite{Pontus}. 
Introducing the notion of cocoercivity as in \cite[Definition 4.10]{BCombettes}, we can obtain the following theorem, where, for each case we can prove that $
\mathcal{M}^{-1}\mathcal{A}$ is $\mathcal{M}\text{-}\alpha$-strongly monotone. The proof is postponed to the appendix.
\begin{theorem}[Linear convergence]\label{thm:DRS-strongconvergence}
	Let $\mathcal{A},~\mathcal{M}$ be the operators defined in \cref{DouglasRachfordSettingIntro}, assume that $\zer \mathcal{A}\neq \emptyset$ and that one of the following holds for $\mu,\beta>0$:
	\begin{enumerate}
		\item  $A$ $\mu$-strongly monotone and $B$ $1/\beta$-cocoercive;
		\item $A$ $\mu$-strongly monotone and $1/\beta$-cocoercive;
		\item $A$ $\mu$-strongly monotone and $\beta$-Lipschitz continuous,
	\end{enumerate}
	or the analogous statements with $A$ and $B$ inverted. Then, the reduced algorithm, i.e.,~\textsf{DRS}, converges strongly with linear rate $r=1/(1+\alpha)$ where
	\begin{equation*}
	1. \    \alpha=\min\bigg\{\frac{\sigma\mu}{2},\frac{1}{2\sigma \beta}\bigg\}, \quad 2. \  \alpha=\frac{\sigma \mu}{\sigma^2\mu\beta+1}, \quad 3. \ \alpha=\frac{\sigma\mu}{\sigma^2\beta^2+1}.
	\end{equation*}
\end{theorem}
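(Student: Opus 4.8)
The strategy is to reduce everything to verifying the $\mathcal{M}$-$\alpha$-strong-monotonicity of $\mathcal{M}^{-1}\mathcal{A}$; once this is in hand, \cref{StrongConvergence} immediately gives that $\widetilde{\mathcal{T}}$ is a Banach contraction with constant $1/(1+\alpha)$, and since \textsf{DRS} is exactly the reduced iteration $w^{k+1}=\widetilde{\mathcal{T}}w^{k}$ (i.e.~\cref{ReducedAlg} with $\lambda_k\equiv1$), the Banach fixed-point theorem yields strong convergence with linear rate $r=1/(1+\alpha)$. By the characterization \cref{eq:AtildeCocoercive2}, it thus suffices to find, in each case, the largest $\alpha>0$ with
\[
\langle v-v',u-u'\rangle \ge \alpha\|u-u'\|_{\mathcal{M}}^{2}
\qquad\text{for all }(u,v),(u',v')\in\mathcal{A}\cap(\bH\times\Image\mathcal{M}).
\]

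First I would make this set explicit. Since $\mathcal{M}(a,b)=(a-b,\,b-a)$, one has $\Image\mathcal{M}=\{(s,-s):s\in\mathcal{H}\}$; writing $u=(x,y)$ and $v=(s,-s)$, the inclusion $v\in\mathcal{A}u$ unravels to $p:=\sigma^{-1}(s-y)\in Ax$ and $q:=\sigma^{-1}y\in B\xi$ with $\xi:=x-s$, whence $s=\sigma(p+q)$ and $\xi=x-\sigma(p+q)$. Using the shorthand $\Delta x=x-x'$ and likewise for the other variables, a short computation gives $\|u-u'\|_{\mathcal{M}}^{2}=\|\Delta x-\Delta y\|^{2}$ and, crucially, that the single vector
\[
d:=\Delta x-\Delta y=\Delta\xi+\sigma\Delta p=\Delta x-\sigma\Delta q
\]
admits two expressions; feeding them into $\langle v-v',u-u'\rangle=\langle\Delta s,d\rangle$ and using $\Delta s=\sigma(\Delta p+\Delta q)$, the mixed terms cancel and one is left with the clean identity
\[
\langle v-v',u-u'\rangle=\sigma\bigl(\langle\Delta p,\Delta x\rangle+\langle\Delta q,\Delta\xi\rangle\bigr),
\]
where $p\in Ax$, $q\in B\xi$. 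Everything then reduces to lower-bounding this by $\alpha\|d\|^{2}$.

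For case 1 I would retain both scalar products: $A$ being $\mu$-strongly monotone and $B$ being $1/\beta$-cocoercive give $\langle\Delta p,\Delta x\rangle\ge\mu\|\Delta x\|^{2}$ and $\langle\Delta q,\Delta\xi\rangle\ge\beta^{-1}\|\Delta q\|^{2}$, which combined with the crude estimate $\|d\|^{2}=\|\Delta x-\sigma\Delta q\|^{2}\le2\|\Delta x\|^{2}+2\sigma^{2}\|\Delta q\|^{2}$ forces $\alpha=\min\{\sigma\mu/2,\,1/(2\sigma\beta)\}$. Cases 2 and 3 are the delicate ones and constitute the main obstacle, because there $B$ is only monotone and so does not control $\Delta\xi$. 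The key device is to discard the nonnegative term $\langle\Delta q,\Delta\xi\rangle$ in the lower bound for $\langle v-v',u-u'\rangle$, yet to exploit $\langle\Delta q,\Delta\xi\rangle\ge0$ a second time: substituting $\Delta\xi=\Delta x-\sigma\Delta p-\sigma\Delta q$ into it and applying Young's inequality yields the upper bound $\|d\|^{2}\le\|\Delta x\|^{2}+\sigma^{2}\|\Delta p\|^{2}$. Inserting $\|\Delta x\|^{2}\le\mu^{-1}\langle\Delta p,\Delta x\rangle$ together with the cocoercivity bound $\|\Delta p\|^{2}\le\beta\langle\Delta p,\Delta x\rangle$ (case 2) or the Lipschitz bound $\|\Delta p\|^{2}\le\beta^{2}\|\Delta x\|^{2}$ (case 3) gives $\langle v-v',u-u'\rangle\ge\sigma\langle\Delta p,\Delta x\rangle\ge\alpha\|d\|^{2}$ with exactly $\alpha=\sigma\mu/(\sigma^{2}\mu\beta+1)$ and $\alpha=\sigma\mu/(\sigma^{2}\beta^{2}+1)$, respectively.

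Finally, the statements with $A$ and $B$ interchanged are obtained by running the identical argument on the Douglas–Rachford operator of \cref{DouglasRachfordSettingIntro} built with the roles of $A$ and $B$ reversed, which is an equally valid \textsf{PPP} formulation of \cref{sumtwo}; there the strongly monotone operator occupies the first block, so the whole computation carries over verbatim with $A\leftrightarrow B$ and produces the same constants $\alpha$. I expect the only genuine work to be the sign-sensitive upper bound on $\|d\|^{2}$ in cases 2 and 3; the cancellation identity for $\langle v-v',u-u'\rangle$ and the passage from $\mathcal{M}$-strong-monotonicity to a contraction via \cref{StrongConvergence} are then routine.
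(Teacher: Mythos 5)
Your treatment of the three cases in which $A$ is the strongly monotone operator is correct, and it is in substance the paper's own proof written in different variables: under the renaming $a=\sigma p$, $b=\xi$, $y=\sigma q$, your identity $\langle v-v',u-u'\rangle=\sigma\bigl(\langle\Delta p,\Delta x\rangle+\langle\Delta q,\Delta\xi\rangle\bigr)$ is exactly the paper's starting point $\langle \xi-\xi',u-u'\rangle=\langle\Delta a,\Delta x\rangle+\langle\Delta b,\Delta y\rangle$, and your constants coincide with the stated ones. Your organization of cases 2 and 3 is even a bit cleaner than the paper's: instead of the paper's $t$-weighted splitting of $\langle\Delta a,\Delta x\rangle$ and completion of squares, you use monotonicity of $B$ once, in the form $\sigma^2\|\Delta q\|^2\le\sigma\langle\Delta q,\Delta x-\sigma\Delta p\rangle$, plus Young's inequality to get $\|d\|^2\le\|\Delta x\|^2-\sigma^2\|\Delta q\|^2-2\sigma^2\langle\Delta q,\Delta p\rangle\le\|\Delta x\|^2+\sigma^2\|\Delta p\|^2$; this derivation is valid and yields the same $\alpha$'s.

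The genuine gap is your last paragraph, i.e., the cases ``with $A$ and $B$ inverted''. The theorem fixes $\mathcal{A},\mathcal{M}$ as in \cref{DouglasRachfordSettingIntro}, and its conclusion concerns the reduced map of \emph{that} formulation, namely the iteration \cref{DRS}, in which $J_{\sigma A}$ is applied first. Swapping the roles of $A$ and $B$ inside the formulation, as you propose, produces a different reduced operator: the \textsf{DRS} map in which $J_{\sigma B}$ is applied first. Writing $R_{\sigma A}:=2J_{\sigma A}-I$, the two maps $\tfrac12(I+R_{\sigma B}R_{\sigma A})$ and $\tfrac12(I+R_{\sigma A}R_{\sigma B})$ are intertwined only through the non-invertible operator $R_{\sigma A}$, so proving that the swapped map is a contraction does not prove the stated claim for the iteration in the theorem. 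The asymmetry is not cosmetic: inside the fixed formulation, your case-2 device of discarding one scalar product fails once the roles are exchanged. Indeed, with $B$ strongly monotone and cocoercive and $A$ merely monotone, one would discard $\langle\Delta p,\Delta x\rangle$ and need $\sigma\langle\Delta q,\Delta\xi\rangle\ge\alpha\|d\|^2$ with $d=\Delta\xi+\sigma\Delta p$; but taking $\sigma\Delta p=t\,\Delta\xi$ and $\Delta q=\mu\Delta\xi$ with $t>0$ arbitrarily large satisfies every available monotonicity constraint while $\|d\|^2=(1+t)^2\|\Delta\xi\|^2$ grows without bound and $\sigma\langle\Delta q,\Delta\xi\rangle$ stays fixed. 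This is precisely where the paper's proof works harder: its case 1b does carry over to your coordinates (use $d=\Delta\xi+\sigma\Delta p$ with $\langle\Delta q,\Delta\xi\rangle\ge\mu\|\Delta\xi\|^2$ and $\langle\Delta p,\Delta x\rangle\ge\tfrac1\beta\|\Delta p\|^2$, which is the content of \cref{eq:SostituzioneDRperDelta}), but its case 2b must retain the term $\langle\Delta a,\Delta x\rangle$ with coefficient $1-2t\sigma\mu$, shown to be nonnegative via $\mu\le\beta$, before invoking the monotonicity of $A$. As written, your proposal does not prove the inverted half of the theorem.
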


\begin{remark}
	Giving tight rates is beyond the scope of this paper, but the rates we derived with our techniques are already satisfying. The rate for case 1 could be optimized taking $\sigma=1/\sqrt{\mu \beta}$ which gives the rate $\sqrt{\beta/\mu}/(\sqrt{\beta/\mu}+1/2)$. This is the same obtained in \cite[Theorem 5.6]{Pontus} if with the notation of \cite{Pontus} we put $\alpha = 1/2$ and optimize only on $\gamma$. Regarding case 2 and 3, the rates we derived are (in practice) almost the same as those given in \cite[Theorem 6.5]{Pontus}, but not tight. 
\end{remark}

\paragraph{Peaceman-Rachford and overrelaxed generalizations} In this section we show that relaxation parameters for \textsf{DRS} can be encoded from a purely monotone operator point of view. 
Let $\sigma >0,~\alpha > -1$ and consider the following operators
\begin{equation}\label{eq:OverelaxedDRS}
\mathcal{A}_{\alpha}:=
\begin{bmatrix}
\alpha I+\sigma A_0 & -I & -I \\
I& 0 & -I \\
(1-2\alpha) I & I & \alpha I+\sigma A_1
\end{bmatrix},\quad
\mathcal{M}:=
\begin{bmatrix}
I & I & I \\
I & I & I \\
I & I & I \\
\end{bmatrix}.
\end{equation}
Let $u=(x_0,v,x_1)\in \bH:=\mathcal{H}^3$, note that $0\in\mathcal{A}_\alpha u$ is equivalent to $v+x_1-\alpha x_0 \in \sigma A_0 x_0$,~ $x_0=x_1$ and $(2\alpha-1)x_0-v-\alpha x_1\in \sigma A_1 x_1$, which implies $0\in A_0 x_0 +A_1 x_0$. Conversely, if $0\in A_0 x +A_1 x$, setting $x_0=x_1=x$, there exists $a_0\in \sigma A_0 x$ and $a_1\in \sigma A_1 x$ such that $0=a_0+a_1$. Setting $v=(\alpha-1)x_0+a_0$ yields $0\in \mathcal{A}_\alpha u$. Hence, finding zeros of $\mathcal{A}_\alpha$ is in a certain sense equivalent to finding zeros of $A_0+A_1$. 

The operator $\mathcal{M}$ is an admissible preconditioner to $\mathcal{A}_\alpha$ (whenever $\alpha>-1$) and the iteration updates can be computed explicitly. Indeed, we need to invert $(\mathcal{M}+\mathcal{A}_\alpha)$ which has a lower triangular structure. This inversion only involves the resolvents $J_{\gamma A_0}$ and $J_{\gamma A_1}$, where $\gamma=\sigma/(1+\alpha)$, as well as linear combinations and is hence Lipschitz continuous. Let $w=\mathcal{C}^*u=x_0+v+x_1$, where $\mathcal{C}^*=\begin{bmatrix}
I & I & I
\end{bmatrix}$ is a decomposition of $\mathcal{M}$ according to \cref{CholeskyDecomposition}, then $\mathcal{T}_\alpha=(\mathcal{M}+\mathcal{A}_\alpha)^{-1}\mathcal{M}$ can be expressed as 
\begin{equation*}
\mathcal{T}_\alpha u = \left( J_{\gamma A_0}\left(\frac{w}{1+\alpha}\right), \ w-2J_{\gamma A_0}\left(\frac{w}{1+\alpha}\right) , \ J_{\gamma A_1}\left(2J_{\gamma A_0}\left(\frac{w}{1+\alpha}\right)-\frac{w}{1+\alpha}\right) \right).
\end{equation*}
Taking as relaxation parameters $\theta_k=\lambda_k/(1+\alpha)$ for all $k \in \mathbb{N}$, and scaling $w$ with a factor of $(1+\alpha)^{-1}$, i.e.,~considering $\widetilde{w}=w/(1+\alpha)$ instead of $w$ in the iteration updates $w^{k+1}=w^k+\lambda_k(\widetilde{\mathcal{T}}_\alpha w^k-w^k)$ (where $\widetilde{\mathcal{T}}_\alpha=(I+\mathcal{C}^*\rhd \mathcal{A}_\alpha)^{-1}$) we get
\begin{equation}\label{relaxedDRS}
  \begin{cases}
    x_0^{k+1} = J_{\gamma A_0}\widetilde{w}^{k},\\
    x_1^{k+1} = J_{\gamma A_1}\left(2x_0^{k+1}-\widetilde{w}^{k}\right),\\
    \widetilde{w}^{k+1} =\widetilde{w}^k+\theta_k(x_1^{k+1}-x_0^{k+1}).
  \end{cases}
\end{equation}
We are interested in \textit{overrelaxation} cases, i.e.,~with $\theta_k\geq 2$ $\big(\theta_k=2$ for all $k$ is the so-called Peaceman-Rachford splitting$\big)$, that correspond to a non-positive $\alpha$. In these cases we cannot expect to have an \textit{unconditionally} convergent algorithm. Indeed, denoting by $a_i\in A_i x_i, \  a_i'\in A_i x_i'$, $\Delta a_i=a_i-a_i'$ and $\Delta x_i=x_i-x_i'$, for $i=0,1$, after some computations we have (with the usual mild abuse of notation)
\begin{equation}\label{eq:OvverelaxedMonotonicity}
\langle \mathcal{A}_\alpha u-\mathcal{A}_\alpha u',u-u'\rangle = \alpha\|\Delta x_1-\Delta x_2\|^2+\sigma\langle  \Delta a_0, \Delta x_1\rangle +\sigma \langle \Delta a_1, \Delta x_2\rangle,
\end{equation}
which could be negative in general if $\alpha$ is negative. However, it is clear from \cref{eq:OvverelaxedMonotonicity} that in that case to gain a non-negative expression, suitable additional assumptions on $A_0$ and $A_1$ should be imposed in order to achieve monotonicity of $\mathcal{A}_\alpha$.   
\begin{prop}\label{prop:OverelaxedDRS}
	\label{prop:overelaxed}
	Let $A_0$ and $A_1$ be maximal monotone operators such that for $i \in \{0,1\}$, $A_i$ is $\mu_i$-strongly monotone and assume that $\zer(A_0+A_1)\neq \emptyset$. For $\alpha >-1$ and $\sigma>0$ set $\gamma=\sigma/(1+\alpha)$ and assume that $\{\theta_k\}_k$ satisfy
	\begin{equation}\label{eq:ovverelaxed_upperparameter}
		 \theta_k \in \left[0,2+\frac{2\gamma\mu_0\mu_1}{\mu_0+\mu_1}\right], \quad \sum_k \theta_k\left(2+\frac{2\gamma\mu_0\mu_1}{\mu_0+\mu_1}-\theta_k\right)= +\infty.
	\end{equation}
	Then, the sequence $\{\widetilde{w}^k\}_k$ generated by \cref{relaxedDRS} with starting point $\widetilde{w}^0$ weakly converges to a fixed point $\widetilde{w}^*$ such that $x^*=J_{\gamma A_0}\widetilde{w}^*$ is a solution of $0\in A_0x+A_1x$. Moreover, the sequences $\{x_0^k\}_k$, $\{x_1^k\}_k$ also converge weakly to $x^*$.
\end{prop}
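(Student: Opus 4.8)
The plan is to reduce Proposition~\ref{prop:overelaxed} to an application of the general reduced-convergence machinery, specifically \cref{ReductionAndConvergence}, by identifying the correct $\mathcal{M}$-monotonicity constant that accounts for the overrelaxation. The key observation is that the admissible preconditioner $\mathcal{M}$ in \cref{eq:OverelaxedDRS} has closed range and the explicit decomposition $\mathcal{C}^*=\begin{bmatrix} I & I & I \end{bmatrix}$, so that the reduced operator $\widetilde{\mathcal{T}}_\alpha$ acts on $\bD=\mathcal{H}$ and the reduced iteration is exactly \cref{relaxedDRS}. What makes overrelaxation ($\theta_k$ possibly up to $2+\tfrac{2\gamma\mu_0\mu_1}{\mu_0+\mu_1}$, hence $\alpha\le 0$) work is that the strong monotonicity of $A_0$ and $A_1$ compensates for the negative term $\alpha\|\Delta x_1-\Delta x_2\|^2$ appearing in \cref{eq:OvverelaxedMonotonicity}, restoring not merely monotonicity but a quantifiable $\mathcal{M}$-strong-monotonicity of $\mathcal{M}^{-1}\mathcal{A}_\alpha$.

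First I would compute, starting from \cref{eq:OvverelaxedMonotonicity} together with the strong monotonicity bounds $\langle \Delta a_i,\Delta x_i\rangle\ge \mu_i\|\Delta x_i\|^2$, a lower bound for $\langle \mathcal{A}_\alpha u-\mathcal{A}_\alpha u',u-u'\rangle$ in terms of $\|u-u'\|_\mathcal{M}^2$. On $\Image\mathcal{M}$ (equivalently, using \cref{prop:MMaximalMonotonicityProp} and the characterization of $\mathcal{M}$-strong-monotonicity through \cref{eq:AtildeCocoercive2}), the relevant seminorm is $\|u-u'\|_\mathcal{M}^2=\|\Delta x_0+\Delta v+\Delta x_1\|^2$. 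The arithmetic here is to show that the combination $\alpha\|\Delta x_1-\Delta x_0\|^2+\sigma\mu_0\|\Delta x_0\|^2+\sigma\mu_1\|\Delta x_1\|^2$ dominates $\alpha'\|\Delta x_0+\Delta v+\Delta x_1\|^2$ for a suitable $\alpha'>0$; using the constraint structure coming from the inclusion (on $\Image\mathcal{M}$ one has $v$ determined so that $\Delta x_0+\Delta v+\Delta x_1$ equals the relevant reduced variable), this should yield that $\mathcal{M}^{-1}\mathcal{A}_\alpha$ is $\mathcal{M}$-$\alpha'$-strongly monotone with $\alpha'=\tfrac{\gamma\mu_0\mu_1}{\mu_0+\mu_1}$, the factor that precisely matches the extra room $\tfrac{2\gamma\mu_0\mu_1}{\mu_0+\mu_1}$ allowed in \cref{eq:ovverelaxed_upperparameter}.

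Once $\mathcal{M}$-$\alpha'$-strong monotonicity is in hand, \cref{StrongConvergence} tells us $\widetilde{\mathcal{T}}_\alpha$ is firmly non-expansive and even a contraction when $\alpha'>0$; more importantly, the inequality \cref{MfejerMasyProof} in the reduced space gains the extra strongly-monotone term, which improves the Krasnoselskii--Mann-type estimate. The plan is to redo the one-step estimate for $\|w^{k+1}-w^*\|^2$ where now the averaging parameter effectively ranges over the enlarged interval, so that the summability condition $\sum_k\theta_k\big(2+\tfrac{2\gamma\mu_0\mu_1}{\mu_0+\mu_1}-\theta_k\big)=+\infty$ replaces $\sum_k\lambda_k(2-\lambda_k)=+\infty$ and still forces asymptotic regularity $\|\widetilde{\mathcal{T}}_\alpha w^k-w^k\|\to0$. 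Weak convergence of $\{\widetilde{w}^k\}_k$ to a fixed point then follows as in \cref{ReductionAndConvergence} (the fixed-point set is nonempty since $\zer(A_0+A_1)\neq\emptyset$), and the identifications $x_0^{k+1}=J_{\gamma A_0}\widetilde{w}^k$, $x_1^{k+1}=J_{\gamma A_1}(2x_0^{k+1}-\widetilde{w}^k)$ together with asymptotic regularity give the weak convergence of $\{x_0^k\}_k$ and $\{x_1^k\}_k$ to the common limit $x^*=J_{\gamma A_0}\widetilde{w}^*$.

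The main obstacle I anticipate is the algebra of the second paragraph: isolating exactly the constant $\tfrac{\gamma\mu_0\mu_1}{\mu_0+\mu_1}$ from the mixed quadratic form requires carefully balancing the cross terms $\|\Delta x_1-\Delta x_0\|^2$ against $\mu_0\|\Delta x_0\|^2+\mu_1\|\Delta x_1\|^2$ and then relating $\|\Delta x_0+\Delta v+\Delta x_1\|^2$ to $\|\Delta x_0\|^2,\|\Delta x_1\|^2$ on $\Image\mathcal{M}$; the harmonic-mean form of the constant strongly suggests a completion-of-squares / Cauchy--Schwarz optimization, and getting the sharp (rather than merely positive) value is where the work lies. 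A secondary subtlety is that since the relaxation parameters $\theta_k$ are allowed up to $2+\tfrac{2\gamma\mu_0\mu_1}{\mu_0+\mu_1}>2$, one cannot directly invoke the non-expansive-averaging framework of \cref{lem:MfejerMon}; instead the $\alpha'$-contraction surplus must be spent to justify the enlarged admissible range, which is exactly the point of introducing $\mathcal{M}$-strong-monotonicity here.
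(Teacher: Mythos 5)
The central step of your plan---that strong monotonicity of $A_0$ and $A_1$ makes $\mathcal{M}^{-1}\mathcal{A}_\alpha$ $\mathcal{M}$-$\alpha'$-strongly monotone with $\alpha'=\gamma\mu_0\mu_1/(\mu_0+\mu_1)$---is false, and everything downstream of it (the contraction of $\widetilde{\mathcal{T}}_\alpha$ via \cref{StrongConvergence}, and the enlarged Krasnoselskii--Mann range paid for by the ``contraction surplus,'' which you in any case only sketch) collapses with it. Counterexample: $\mathcal{H}=\mathbb{R}$, $A_i=\partial|\cdot|+\mu_i I$ (maximal, $\mu_i$-strongly monotone, $\zer(A_0+A_1)=\{0\}$). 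For $u=(0,\sigma,0)$ and $u'=(0,-\sigma,0)$ one has $0\in\mathcal{A}_\alpha u$ and $0\in\mathcal{A}_\alpha u'$ for every $\alpha$, because $\sigma A_i(0)=[-\sigma,\sigma]$ and all $\alpha$-dependent entries vanish when $x_0=x_1=0$; hence $(u,0),(u',0)\in\mathcal{A}_\alpha\cap(\bH\times\Image\mathcal{M})$, while $\langle 0-0,u-u'\rangle=0$ and $\|u-u'\|_\mathcal{M}^2=\|\Delta x_0+\Delta v+\Delta x_1\|^2=4\sigma^2>0$, so \cref{eq:AtildeCocoercive2} fails for every $\alpha'>0$. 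The troublesome quantity is exactly the one your second paragraph glosses over: on the graph, $\Delta v=-(1-\alpha)\Delta x_0+\Delta a_0$ contains the uncontrolled multivalued part $\Delta a_0$, which cannot be dominated by $\|\Delta x_0\|,\|\Delta x_1\|$. This is not a fixable technicality: in the same example the fixed points of the reduced (DRS) map form the whole interval $[-\gamma,\gamma]$, so $\widetilde{\mathcal{T}}_\alpha$ cannot be a contraction; consistently, \cref{thm:DRS-strongconvergence} requires cocoercivity or Lipschitz hypotheses \emph{in addition to} strong monotonicity to obtain contractivity, and \cref{prop:overelaxed} itself claims only weak convergence.

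The paper spends the strong monotonicity differently: not to make anything strongly monotone, but to make $\mathcal{A}_\alpha$ \emph{plainly monotone for a negative $\alpha$}. From \cref{eq:OvverelaxedMonotonicity} and $\langle\Delta a_i,\Delta x_i\rangle\geq\sigma\mu_i\|\Delta x_i\|^2$ one gets
\[
\langle\mathcal{A}_\alpha u-\mathcal{A}_\alpha u',u-u'\rangle\geq(\alpha+\sigma\mu_0)\|\Delta x_0\|^2+(\alpha+\sigma\mu_1)\|\Delta x_1\|^2-2\alpha\langle\Delta x_0,\Delta x_1\rangle,
\]
which is nonnegative for $\alpha=-\sigma\mu_0\mu_1/(\mu_0+\mu_1)$, i.e.\ $\alpha=-\gamma\mu_0\mu_1/(\gamma\mu_0\mu_1+\mu_0+\mu_1)\in(-1,0)$. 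The observation you are missing is that the iteration \cref{relaxedDRS} depends on $(\sigma,\alpha)$ only through $\gamma$ and $\theta_k$, so one is free to re-represent the \emph{same} iteration with this particular negative $\alpha$; then, with $\lambda_k=(1+\alpha)\theta_k$, condition \cref{eq:ovverelaxed_upperparameter} is exactly $\lambda_k\in[0,2]$ and $\sum_k\lambda_k(2-\lambda_k)=+\infty$, because $2/(1+\alpha)=2+2\gamma\mu_0\mu_1/(\mu_0+\mu_1)$. After this reparametrization no new one-step or Krasnoselskii--Mann estimate is needed: $\mathcal{A}_\alpha$ is maximal monotone, $(\mathcal{M}+\mathcal{A}_\alpha)^{-1}$ is Lipschitz, and \cref{remark:samesolution} gives the weak convergence of $\{\widetilde{w}^k\}_k$, $\{x_0^k\}_k$ and $\{x_1^k\}_k$ verbatim. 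In short, the enlarged range of $\theta_k$ is a reparametrization effect, not an improved contraction estimate.
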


\begin{proof}
    We rely on \cref{remark:samesolution}. The Lipschitz regularity of $(\mathcal{M}+\mathcal{A}_\alpha)^{-1}$ is clear. As for monotonicity of $\mathcal{A}_\alpha$,  using the strong monotonicity of $A_0$ and $A_1$ we have that
        \begin{equation*}
           \cref{eq:OvverelaxedMonotonicity}  \geq (\alpha+\sigma \mu_0)\|\Delta x_0\|^2+(\alpha+\sigma \mu_1)\|\Delta x_1\|^2-2\alpha\langle\Delta x_0,\Delta x_1\rangle.
        \end{equation*}
    The right-hand side is non-negative if $\alpha+\sigma \mu_0\geq 0$, $\alpha+\sigma \mu_1 \geq 0$, and $(\alpha+\sigma\mu_0)(\alpha+\sigma\mu_1)=\alpha^2$, i.e.,~$\alpha=-\frac{\sigma\mu_0\mu_1}{\mu_0+\mu_1}$. Using that $\gamma=\sigma/(1+\alpha)$ then yields $\alpha= -\frac{\gamma\mu_0\mu_1}{\gamma\mu_0\mu_1+\mu_0+\mu_1}$. Now, since $\lambda_k = (1+\alpha)\theta_k$, condition \cref{eq:ovverelaxed_upperparameter} leads to $\lambda_k \in [0,2]$, $\sum_k\lambda_k(2-\lambda_k)= +\infty$. The maximality of $\mathcal{A}_\alpha$ follows from standard arguments, e.g., \cite[(i) Corollary 25.5]{BCombettes}.
\end{proof}

\cref{prop:OverelaxedDRS} is known, and has been discussed in \cite{DONG20104307} and \cite{Monteiro2018} with $\mu_0=\mu_1$ and $\theta_k=\theta$ fixed for all $k \in \mathbb{N}$. 

\paragraph{Forward Douglas-Rachford} In this example we show that even forward steps can be encoded in a \textsf{PPP} formulation. Consider the 3-operator problem
\begin{equation}\label{3op_pbl}
\text{find} \ x \in \mathcal{H} \ \text{such that:} \ 0 \in A_0x+A_1x+Cx,
\end{equation}
where $A_0, \ A_1$ and $C$ are three maximal monotone operators and $C$ has full domain and is $1/\beta$-cocoercive, with $\beta >0$. The so-called Forward Douglas-Rachford \textsf{(FDR)} algorithm, also known as Davis-Yin algorithm \cite{Davis2017}, can be thought as a \textsf{DRS} scheme with an additional forward term. Whether it can be seen from a degenerate preconditioned point of view is not obvious at first glance, but here is a proper splitting
\begin{equation}\label{DavisYinSplitting}
\mathcal{A}_\alpha:= 
\begin{bmatrix}
\alpha I+\sigma A_0 & -I & -I \\
I& 0 & -I \\
(1-2\alpha)I + \sigma C & I & \alpha I+\sigma A_1 
\end{bmatrix}, \quad
\mathcal{M}:=
\begin{bmatrix}
I & I & I \\
I & I & I \\
I & I & I \\
\end{bmatrix}.
\end{equation}
As can be seen analogous to the previous case, denoting by $u=(x_0, v, x_1)\in \bH:=\mathcal{H}^3$, $x$ solves \cref{3op_pbl} if and only if $0\in \mathcal{A}_\alpha u$ with $x_0=x_1=x$ and $v=(\alpha-1)x-a_0-C x$ for $a_0\in \sigma A_0 x$, $a_1\in \sigma A_1 x$ such that $0=a_0+a_1+\sigma Cx$.
Furthermore, whenever $\alpha>-1$, the operator $\mathcal{M}$ is an admissible preconditioner to $\mathcal{A}_\alpha$, has closed range, block-rank one and obeys $\mathcal{M}=\mathcal{C}\mathcal{C}^*$ for $\mathcal{C}^*=\begin{bmatrix}
I & I & I
\end{bmatrix}$. Also in this case, $(\mathcal{M}+\mathcal{A}_\alpha)^{-1}$ is Lipschitz continuous. Performing the same sort of computations as in the paragraph devoted to the analysis of the Peaceman-Rachford splitting, in particular putting $\gamma=\sigma/(1+\alpha)$, we eventually get to the following resolvent $\mathcal{T}_\alpha := (I+\mathcal{M}^{-1}\mathcal{A}_\alpha)^{-1}$
\begin{align*}
    \mathcal{T}_\alpha u = \bigg( & J_{\gamma A_0}\left(\frac{w}{1+\alpha}\right), \ w-2J_{\gamma A_0}\left(\frac{w}{1+\alpha}\right),\\ & J_{\gamma A_1}\left(2J_{\gamma A_0}\left(\frac{w}{1+\alpha}\right)-\frac{w}{1+\alpha}-\gamma C J_{\gamma A_0}\left(\frac{w}{1+\alpha}\right)\right) \bigg) ,
\end{align*}
where $w=\mathcal{C}^*u=x_0+v+x_1$. Again, introducing new relaxation parameters $\theta_k = \lambda_k/(1+\alpha)$ for all $k \in \mathbb{N}$, and scaling $w$ with a factor of $(1+\alpha)^{-1}$, i.e.,~considering $\widetilde{w}=w/(1+\alpha)$ instead of $w$, we obtain a reduced algorithm of the form
\begin{equation*}
  \begin{cases}
    x_0^{k+1} = J_{\gamma A_0}\widetilde{w}^{k},\\
    x_1^{k+1} = J_{\gamma A_1}(2x_0^{k+1}-\widetilde{w}^{k}-\gamma Cx_0^{k+1}),\\
   \widetilde{w}^{k+1}  =\widetilde{w}^k+\theta_k(x_1^{k+1}-x_0^{k+1}),
  \end{cases}
\end{equation*}
which coincides with \textsf{FDR} with relaxation parameters $\theta_k$.  

The monotonicity of $\mathcal{A}_\alpha$ in this case is not obvious and fails in general. This is not surprising, as the \textsf{FDR} operator is in general only averaged but not firmly non-expansive, as also noticed in \cite{Davis2017}. However, assuming suitable regularity for $C$, the proposed \textsf{PPP} framework provides straightforward conditions on the parameters that ensure convergence. Indeed, introducing similar notations to those introduced in \cref{thm:DRS-strongconvergence} and \cref{prop:overelaxed} (namely, $\Delta C x_0= C x_0- C x_0'$ and $\Delta x_i=x_i-x_i'$, $i=0,1$), we have
\begin{equation*}
\begin{aligned}
\langle \mathcal{A}_{\alpha}u-\mathcal{A}_{\alpha}u',u-u'\rangle & \geq \alpha \|\Delta x_0-\Delta x_1\|^2+ \sigma \langle \Delta C x_0, \Delta x_1 \rangle\\
& =\alpha \|\Delta x_0-\Delta x_1\|^2 + \sigma \langle \Delta C x_0, \Delta x_1 - \Delta x_0 \rangle + \sigma \langle \Delta C x_0, \Delta x_0 \rangle \\
& \geq \alpha \|\Delta x_0-\Delta x_1\|^2 + \sigma \langle \Delta C x_0, \Delta x_1 - \Delta x_0 \rangle + \frac{\sigma}{\beta}\| \Delta C x_0 \|^2.
\end{aligned}
\end{equation*}
Thus, we obtain the desired monotonicity by setting $\alpha= \frac{\sigma\beta}{4}>0$. In terms of $\gamma = \sigma/(1+\alpha)$ this means $\alpha=\frac{\gamma\beta}{4-\gamma\beta}$, which is positive for all $\gamma \in (0,4/\beta)$. Since we want to use \cref{remark:samesolution}, this choice of $\alpha$ yields the conditions $\theta_k\in\big[0,2-\frac{\gamma\beta}{2}\big]$, $\sum_k\theta_k(2-\frac{\gamma\beta}{2}-\theta_k)=+\infty$, which slightly improves those found in \cite[Theorem 2.1]{Davis2017}. The maximality of $\mathcal{A}_\alpha$ follows from \cite[Corollary 25.5]{BCombettes} as it can be seen as the sum of the maximal monotone operator $\mathrm{diag}(\sigma A_0, 0, \sigma A_1)$ and a maximal monotone operator with full domain. Using \cref{remark:samesolution} we get that in case \cref{3op_pbl} has a solution, for all starting points $\widetilde{w}^0 \in \mathcal{H}$, the sequence $\{\widetilde{w}^k\}_k$ generated by \textsf{rPPP} weakly converges to some $\widetilde{w}^*$ such that $J_{\gamma A_0}\widetilde{w}^*$ is a solution of \cref{3op_pbl}, moreover, also the sequences $\{x_0^k\}_k$, $\{x_1^k\}_k$ weakly converge to this same solution.

\begin{remark}\label{rem:overrelaxedFDR}
	It is also possible to consider the case $\alpha\leq 0$, that would correspond to a Peaceman-Rachford and an overrelaxed version of \textsf{FDR}, but further assumptions on the operators (e.g.,~$A_0$ and $A_1$ strongly monotone) would be required.
\end{remark}

\subsection{Parallel and sequential generalizations of \textsf{FDR}}

We now turn our attention to the $(N+1)$-operator case. We show how the proposed framework provides different new generalizations of the above-discussed splitting methods to tackle a larger class of operators. In particular, we focus on \textsf{FDR} \cref{DavisYinSplitting} (and hence, on \textsf{DRS} and the well-known Forward-Backward and Backward-Forward methods as particular cases, see \cite{Davis2017}), we describe its parallel version from a \textsf{PPP} standpoint and introduce a new \textit{sequential} generalisation. Consider the following problem:
\begin{equation}\label{sumDY}
\text{find} \ x \in \mathcal{H} \ \text{such that:} \ 0 \in A_0 x+\sum_{i=1}^N \big[A_ix +C_i x\big],
\end{equation}
where $A_0, \ A_i, \ C_i$ for $i \in \{1,\dots, N\}$ are maximal monotone operators and $C_i$ for $i \in \{1,\dots, N\}$ have full domain and are $1/\beta-$cocoercive, with $\beta>0$.

\subsubsection{Parallel generalization of \textsf{FDR}}\label{sec:ParallelFDR}
One of the most popular approaches to extend a splitting algorithm to tackle \cref{sumDY} is to re-formulate the problem as a 3-operator problem on a suitable product-space, and then applying the 3-operator splitting to this larger problem, typically yielding schemes which are intrinsically parallel in nature. This is often referred to as \textit{product space trick}, or \textit{consensus technique} and is also known as \textit{Pierra's reformulation} \cite{Pierra1984}. We refer to \cite[Section 8]{Condat2020ProximalSA} and the references therein for further details. We show that the same scheme can be re-obtained as \emph{one} natural generalization of \cref{DavisYinSplitting} to tackle the $(2N+1)$-operator case \cref{sumDY}. Consider, for instance

\vspace{-0.4cm}
{\footnotesize
	\begin{equation*}\label{ParallelDavisYinSplitting}
	\arraycolsep=1.5pt
	\renewcommand{\arraystretch}{0.9}
	\mathcal{A}_\alpha:= 
	\begin{bmatrix}
	N\alpha I+\sigma A_0 & -I & -I & \dots & -I & -I\\
	I &  & -I & & & \\
	(1-2\alpha)I + \sigma C_1 & I & \alpha I+\sigma A_1  & & &\\
	\vdots & & & \ddots & & \\
	I &  &  & & & -I\\
	(1-2\alpha)I + \sigma C_N &  &  & &  I  & \alpha I+\sigma A_N
	\end{bmatrix}, \quad
	\arraycolsep=3pt
	\mathcal{M}:=
	\begin{bmatrix}
	NI & I & I & \dots & I & I\\
	I & I & I &  & &  \\
	I & I & I &  & &\\
	\vdots & & & \ddots & &\\
	I &  &  & & I & I \\
	I &  &  & & I & I
    \end{bmatrix}.
	\end{equation*}
}%
It is easy to see that if $u:=(x_0, v_1, x_1, \dots ,v_N,x_N)\in \bH= \mathcal{H}^{(2N+1)}$ satisfies $0\in\mathcal{A}_\alpha u$ then $x =x_0=\dots =x_N$ solves \cref{sumDY}, and that a solution $x$ of \cref{sumDY} yields $0\in \mathcal{A}_\alpha u$ setting $x_0=\dots=x_N=x$ and $v_i=(\alpha-1)x-a_i-\sigma C_i x$ for all $i\in\{1,\dots,N\}$, where $a_i\in \sigma A_i x$, $i\in\{0,\dots,N\}$, such that $0=a_0+\sum_{i=1}^N a_i + \sigma C_i x$. Also, choosing $\mathcal{C}^*:u\mapsto (x_0+v_1+x_1,~\dots~,~x_0+v_N+x_N)$ we obtain an onto decomposition $\mathcal{M}=\mathcal{C}\mathcal{C}^*$, showing that \textsf{rPPP} reduces the number of variables from $(2N+1)$ to $N$. Moreover, $(\mathcal{M}+\mathcal{A}_\alpha)$ is lower-triangular, hence easy to invert, with $(\mathcal{M}+\mathcal{A}_\alpha)^{-1}$ easily seen to be Lipschitz continuous. Denoting by $w=\mathcal{C}^*u \in \mathcal{H}^N$ we obtain $w_i = x_0+v_i+x_i$, for all $i\in\{1,\dots,N\}$, that we re-scale with a factor of $(1+\alpha)^{-1}$, namely considering $\widetilde{w}_i=w_i/(1+\alpha)$ instead of $w_i$ for $i\in\{1,\dots,N\}$. We also set $\gamma = \sigma/(1+\alpha)$. Thus, we can explicitly compute $\widetilde{\mathcal{T}}_\alpha=\left(I+\mathcal{C}^*\rhd \mathcal{A}_\alpha\right)^{-1}$  and the reduced algorithm with relaxation parameters $\theta_k=\lambda_k/(1+\alpha)$ writes as
\begin{equation}\label{parallelDY}
\left\{
\begin{aligned}
x_0^{k+1} & = J_{\frac{\gamma}{N} A_0}\left( \frac{1}{N}\sum_{i=1}^{N}\widetilde{w}_i^k\right),\\
x_{i}^{k+1} & =J_{\gamma A_i}(2x_0^{k+1}-\widetilde{w}_i^k-\gamma C_ix_0^{k+1}) & \textit{ for } i\in \{1,\dots, N\},\\
\widetilde{w}_i^{k+1} & = \widetilde{w}_i^k+\theta_k(x_{i}^{k+1}-x_0^{k+1}) & \textit{ for } i\in \{1,\dots, N\}.
\end{aligned}
\right.
\end{equation}
Notice that the reduced proximal point iteration in this case coincides exactly with the parallel extension of the so-called \textsf{FDR}/Davis-Yin algorithm presented, for instance in \cite{Condat2020ProximalSA}, for convex minimization. If $A_0=0$, then \textsf{rPPP} coincides with the Generalized Backward-Forward iteration \cite{raguet:GenFB} and if $C_i=0$ for all $i \in \{1,\dots, N-1\}$ we get the parallel extension of \textsf{DRS} that can be found in the literature for the convex minimization case \cite[Theorem 8.1]{Condat2020ProximalSA}. The novelty that the proposed framework provides is an alternative derivation of the scheme as a proximal point algorithm with respect to an explicit operator $\mathcal{C}^*\rhd \mathcal{A}_\alpha$, that allows to give a straightforward proof of convergence (below, see also \cite{Davis2017, raguet:GenFB} for similar versions) and to derive new variants (see next subsection).

\begin{theorem}\label{thm:parallelDY}
	Let $\widetilde{w}^k=(\widetilde{w}_1^k,\dots,\widetilde{w}_N^k)$ for $k \in \mathbb{N}$ be the sequence generated by the parallel \textsf{FDR} iteration in \cref{parallelDY} with starting point $\widetilde{w}^0\in \mathcal{H}^N$. Let $\gamma\in(0,4/\beta)$ and $\{\theta_k\}_k$ satisfy $\theta_k\in\big[0,2-\frac{\gamma\beta}{2}\big]$ and $\sum_k\theta_k\big(2-\frac{\gamma\beta}{2}-\theta_k\big)=+\infty$. If a solution to \cref{sumDY} exists, then $\{\widetilde{w}^k\}_k$ converges weakly to a $\widetilde{w}^*\in \mathcal{H}^N$ such that $J_{\frac{\gamma}{N} A_0}\left(\frac{1}{N}\sum_{i=1}^N\widetilde{w}_i^*\right)$ is a solution to \cref{sumDY}. Moreover, the sequences $\{x_i^k\}_k$, for $i \in \{0,\dots,N\}$, weakly converge to this solution.
\end{theorem}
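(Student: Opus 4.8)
The plan is to deduce the statement directly from \cref{remark:samesolution}, since the discussion preceding the theorem already supplies all the structural prerequisites: $\mathcal{M}$ is an admissible preconditioner with closed range, the map $\mathcal{C}^*:u\mapsto(x_0+v_1+x_1,\dots,x_0+v_N+x_N)$ gives an onto decomposition $\mathcal{M}=\mathcal{C}\mathcal{C}^*$, the inverse $(\mathcal{M}+\mathcal{A}_\alpha)^{-1}$ is Lipschitz continuous (as $\mathcal{M}+\mathcal{A}_\alpha$ is lower triangular and inverts through resolvents and bounded linear maps), and the reduced iteration $\widetilde{w}^{k+1}=\widetilde{w}^k+\theta_k(\widetilde{\mathcal{T}}_\alpha\widetilde{w}^k-\widetilde{w}^k)$ coincides with \cref{parallelDY}. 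The only two items left to check are the maximal monotonicity of $\mathcal{A}_\alpha$ under the stated parameters and the translation of the conditions on $\{\theta_k\}$ into the admissible relaxation range required by \cref{eq:PPP}.

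For monotonicity I would repeat, now summed over the $N$ blocks, the computation carried out above for the $3$-operator \textsf{FDR} case. Writing $\Delta x_i=x_i-x_i'$, $\Delta v_i=v_i-v_i'$ and $\Delta C_i x_0=C_i x_0-C_i x_0'$, the skew-symmetric coupling linking each $v_i$ to the pair $(x_0,x_i)$ cancels identically, the $N\alpha I$ weight in the $x_0$-row distributes as $N\alpha\|\Delta x_0\|^2$, and the nonnegative diagonal contributions of the $A_i$ are discarded, leaving
\begin{equation*}
\langle \mathcal{A}_\alpha u-\mathcal{A}_\alpha u',u-u'\rangle \geq \sum_{i=1}^N\Bigl(\alpha\|\Delta x_0-\Delta x_i\|^2+\sigma\langle \Delta C_i x_0,\Delta x_i\rangle\Bigr).
\end{equation*}
Splitting $\langle \Delta C_i x_0,\Delta x_i\rangle=\langle \Delta C_i x_0,\Delta x_i-\Delta x_0\rangle+\langle \Delta C_i x_0,\Delta x_0\rangle$ and applying the $1/\beta$-cocoercivity of each $C_i$ to the last term turns every summand into a quadratic form in $\|\Delta x_0-\Delta x_i\|$ and $\|\Delta C_i x_0\|$ that is nonnegative precisely when $\alpha\geq\sigma\beta/4$; the choice $\alpha=\sigma\beta/4$ makes each term, and hence the whole expression, nonnegative, exactly as in the $N=1$ case.

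Maximality I would obtain from \cite[Corollary 25.5]{BCombettes} by splitting $\mathcal{A}_\alpha=\diag(\sigma A_0,0,\sigma A_1,\dots,0,\sigma A_N)+\mathcal{B}$, where the block-diagonal part is maximal monotone (a product of maximal monotone operators together with trivial zero blocks) and $\mathcal{B}$ gathers the bounded skew-symmetric linear coupling, the $\alpha$-multiples of the identity, and the cocoercive terms $\sigma C_i$. The same cancellations show that $\mathcal{B}$ satisfies the identical lower bound, hence is monotone for $\alpha=\sigma\beta/4$; since the $C_i$ are full-domain and $\beta$-Lipschitz, $\mathcal{B}$ is continuous with full domain and therefore maximal monotone, so the sum rule yields maximality of $\mathcal{A}_\alpha$.

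It then remains to verify the parameter bookkeeping and invoke the convergence corollary. With $\alpha=\sigma\beta/4$ one has $\gamma=\sigma/(1+\alpha)$, equivalently $\alpha=\gamma\beta/(4-\gamma\beta)$, positive exactly for $\gamma\in(0,4/\beta)$, and $1+\alpha=4/(4-\gamma\beta)$; consequently $\lambda_k=(1+\alpha)\theta_k$ ranges over $[0,2]$ iff $\theta_k\in[0,2-\gamma\beta/2]$, while $\lambda_k(2-\lambda_k)=(1+\alpha)^2\theta_k(2-\tfrac{\gamma\beta}{2}-\theta_k)$ shows the divergence requirement in \cref{eq:PPP} is equivalent to the stated one on $\{\theta_k\}$. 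Applying \cref{remark:samesolution} gives weak convergence of $\{\widetilde{w}^k\}$ to some $\widetilde{w}^*$ with $(\mathcal{M}+\mathcal{A}_\alpha)^{-1}\mathcal{C}\widetilde{w}^*$ a zero of $\mathcal{A}_\alpha$, whose common component must, by the solution equivalence recorded just before the theorem, solve \cref{sumDY}; this component is precisely $J_{\frac{\gamma}{N}A_0}(\tfrac1N\sum_i\widetilde{w}_i^*)$. Reading off the components of the weakly convergent sequence $\{(\mathcal{M}+\mathcal{A}_\alpha)^{-1}\mathcal{C}w^k\}$ then yields the weak convergence of each $\{x_i^k\}$ to this same point, since every component $x_i^*$ of a zero of $\mathcal{A}_\alpha$ equals the common solution $x^*$. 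I expect the monotonicity estimate to be the main obstacle: one must track the $N\alpha$ weight and the block couplings carefully enough that the bound decouples cleanly into $N$ independent nonnegative quadratic forms.
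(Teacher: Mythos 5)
Your proposal is correct and follows essentially the same route as the paper's proof: the identical monotonicity computation leading to $\sum_{i=1}^N\alpha\|\Delta x_0-\Delta x_i\|^2+\sigma\langle \Delta C_ix_0,\Delta x_i-\Delta x_0\rangle+\tfrac{\sigma}{\beta}\|\Delta C_ix_0\|^2$ with the choice $\alpha=\sigma\beta/4$, the same translation $\lambda_k=(1+\alpha)\theta_k$ of the relaxation parameters, maximality of $\mathcal{A}_\alpha$ via a block-diagonal-plus-Lipschitz decomposition, and the conclusion via \cref{remark:samesolution}. The only differences are cosmetic: the paper invokes \cref{ReductionAndConvergence} explicitly before \cref{remark:samesolution}, and it keeps track of the scaling factor by writing $(\mathcal{M}+\mathcal{A}_\alpha)^{-1}\mathcal{C}(1+\alpha)\widetilde{w}^*$, which your write-up omits but which does not affect the argument.
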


\begin{proof}
	First, we need to check the monotonicity of $\mathcal{A}_\alpha$. Pick $(u,p)$ and $(u',p')\in \mathcal{A}_\alpha$. Then, denoting $\Delta x_0 = x_0-x_0'$, and $\Delta x_i = x_i-x_i'$, $\Delta C_i=C_ix_0-C_ix_0'$, $\Delta a_i=a_i-a_i'$ where $a_i\in A_ix_i$ and $a_i'\in A_ix_i'$ for all $i\in \{1,\dots, N\}$, we have
	\begin{align*}
	\langle p-p',& u-u'\rangle= N \alpha \|\Delta x_0\|^2+ \sigma\langle \Delta a_0,\Delta x_0\rangle\\
	&+ \sum_{i=1}^N -2\alpha\langle \Delta x_0,\Delta x_i\rangle+\sigma \langle \Delta C_ix_0,\Delta x_i\rangle +\alpha \|\Delta x_i\|^2+ \sigma \langle \Delta a_i,\Delta x_i\rangle\\
	& \geq \sum_{i=1}^N\alpha \|\Delta x_0\|^2 -2\alpha\langle \Delta x_0,\Delta x_i\rangle+\alpha \|\Delta x_i\|^2\\& \hspace{2.5cm}+\sigma \langle \Delta C_ix_0,\Delta x_i-\Delta x_0\rangle+\sigma \langle \Delta C_ix_0,\Delta x_0\rangle\\
	& \geq \sum_{i=1}^N \alpha \|\Delta x_0-\Delta x_i\|^2+\sigma \langle \Delta C_ix_0,\Delta x_i-\Delta x_0\rangle+\sigma /\beta \|\Delta C_ix_0\|^2,
	\end{align*}
	where the right-hand side is non-negative if we impose the condition $\alpha = \frac{\beta \sigma}{4} >0$. In terms of $\gamma = \sigma/(1+\alpha)$ this means $\alpha=\frac{\gamma\beta}{4-\gamma\beta}$, which is positive for all $\gamma \in (0,4/\beta)$. Notice moreover that $\lambda_k=\theta_k(1+\alpha)$ satisfy $\lambda_k\in[0,2]$ and $\sum_{k} \lambda_k(2-\lambda_k)=+\infty$. Thus, we are now under the hypothesis of \cref{ReductionAndConvergence}, that entails the convergence of $\{\widetilde{w}^k\}_k$ to a point $\widetilde{w}^*$ such that $(\mathcal{M}+\mathcal{A}_\alpha)^{-1}\mathcal{C}(1+\alpha)\widetilde{w}^* \in \zer \mathcal{A}_\alpha$. As a consequence, $J_{\gamma A_0}\left(\frac{1}{N}\sum_{i=1}^N\widetilde{w}_i^*\right)$ is a solution to \cref{sumDY}.
	
	As for the convergence of the associated \textsf{PPP}, we first observe that the maximality of $\mathcal{A}_\alpha$ is clear: indeed, $\mathcal{A}_\alpha$ can be written as a sum of a maximal monotone operator and a Lipschitz term \cite[Lemma 2.4]{brezis1973ope}. By inspection, $(\mathcal{M}+\mathcal{A}_\alpha)^{-1}$ is Lipschitz. Therefore, by \cref{remark:samesolution}, the sequence $\{(\mathcal{M}+\mathcal{A}_\alpha)^{-1}\mathcal{C}(1+\alpha)\widetilde{w}^k\}_k$ weakly converges to a fixed point of $\mathcal{T}_\alpha$. This gives us in particular the weak convergence of $\{x_i^k\}_k$, for all $i\in \{0,\dots,N\}$, to the same solution of \cref{sumDY}.
\end{proof}
As observed for the case $N=1$ in \cite[Theorem 2.1]{Davis2017}, the convergence of $\{x_i^k\}_k$ is strong if at least one operator is \textit{uniformly monotone} (cf. \cite[Definition 22.1 (iii)]{BCombettes}).
\begin{corollary}[Strong convergence]\label{prop:strongconvergenceParallelFDR}
	Assume that there exists $i \in \{0,\dots, N\}$ such that $A_i$ is uniformly monotone on every bounded set of $\dom A_i$, then for all $i \in \{0,\dots, N\}$ the sequences $\{x_i^k\}_{k}$ generated by \cref{parallelDY} converge strongly to a solution $x^*$ of \cref{sumDY}.
\end{corollary}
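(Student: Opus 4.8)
The plan is to upgrade the weak convergence already furnished by \cref{thm:parallelDY} to strong convergence by injecting the uniform monotonicity of the distinguished operator into the descent inequality underlying the Fej\'er analysis. Write $i_0\in\{0,\dots,N\}$ for the index with $A_{i_0}$ uniformly monotone on bounded sets, and let $x^*$ be the solution produced by \cref{thm:parallelDY}, so that $x_i^k\rightharpoonup x^*$ for every $i$. Since weakly convergent sequences are bounded, all the $\{x_i^k\}_k$ stay in a fixed ball; restricting to the bounded subset of $\dom A_{i_0}$ containing $\{x_{i_0}^k\}_k\cup\{x^*\}$ yields an increasing modulus $\phi$, vanishing only at $0$, with $\langle a-a',\xi-\xi'\rangle\geq\phi(\|\xi-\xi'\|)$ for the relevant pairs $(\xi,a),(\xi',a')\in A_{i_0}$. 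Here I use that the resolvent output $x_{i_0}^{k+1}=J_{\gamma A_{i_0}}(\cdot)$ lies in $\dom A_{i_0}$ together with an explicit element of $A_{i_0}x_{i_0}^{k+1}$.

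First I would revisit the monotonicity computation inside the proof of \cref{thm:parallelDY}. There the terms $\sigma\langle\Delta a_i,\Delta x_i\rangle$ coming from the $A_i$ were simply dropped by mere monotonicity; retaining instead the single term attached to $A_{i_0}$ and bounding it below by $\sigma\phi(\|\Delta x_{i_0}\|)$ shows that the monotonicity gap of $\mathcal{A}_\alpha$ between the resolvent output $\mathcal{T}_\alpha u^k$ (whose $x$-components are the $x_i^{k+1}$) and a zero $u^*$ of $\mathcal{A}_\alpha$ (whose $x$-components all equal $x^*$) is at least $\sigma\phi(\|x_{i_0}^{k+1}-x^*\|)$, while the completion of squares that neutralizes the cocoercive perturbations $C_i$ still leaves the remainder nonnegative. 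Via \cref{MonotoneOperatorsPushForward} this gap is exactly the slack in the $\mathcal{M}$-firm non-expansiveness of $\mathcal{T}_\alpha$ from \cref{lem:Mfne}, equivalently of the reduced resolvent $\widetilde{\mathcal{T}}_\alpha$. Hence the Fej\'er estimate of \cref{lem:MfejerMon} sharpens, in the reduced variable $w^k=\mathcal{C}^*u^k$, to $\|w^{k+1}-w^*\|^2\leq\|w^k-w^*\|^2-2\lambda_k\sigma\phi(\|x_{i_0}^{k+1}-x^*\|)$. As $\{\|w^k-w^*\|\}_k$ is $\mathcal{M}$-Fej\'er monotone, hence convergent, telescoping gives $\sum_k\lambda_k\phi(\|x_{i_0}^{k+1}-x^*\|)<+\infty$, whence $\phi(\|x_{i_0}^{k+1}-x^*\|)\to0$ and therefore $x_{i_0}^k\to x^*$ strongly, since $\phi$ is increasing and positive away from $0$.

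It then remains to propagate the strong convergence of the single coordinate $x_{i_0}^k$ to all the others, for which I would use the $\mathcal{M}$-asymptotic regularity of \cref{lem:MfejerMon}. Since $\|\cdot\|_{\mathcal{M}}=\|\mathcal{C}^*\cdot\|$ and the $i$-th coordinate of $\mathcal{C}^*(\mathcal{T}_\alpha u^k-u^k)$ is a fixed positive multiple of $x_i^{k+1}-x_0^{k+1}$, the relation $\|\mathcal{T}_\alpha u^k-u^k\|_{\mathcal{M}}\to0$ forces $\|x_i^{k+1}-x_0^{k+1}\|\to0$ strongly for every $i$. Combining this with $x_{i_0}^k\to x^*$ yields $x_0^k\to x^*$, and then $x_i^k\to x^*$ strongly for all $i$, which is the claim.

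The main obstacle is the faithful transfer of uniform monotonicity through the reduction: one must re-run the estimate of \cref{thm:parallelDY} preserving precisely the $A_{i_0}$-term while still completing the squares that absorb the $C_i$, and check that the iterates together with the limit remain in one bounded subset of $\dom A_{i_0}$ so that a single global modulus $\phi$ applies. A secondary delicate point is passing from summability to the vanishing of $\phi(\|x_{i_0}^{k+1}-x^*\|)$ for the \emph{whole} sequence: this is immediate when the relaxations are bounded away from $0$, and otherwise must be argued through subsequences together with the weak limit $x^*$ and the monotone behaviour of $\|w^k-w^*\|$.
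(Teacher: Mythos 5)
Your core estimate coincides with the paper's: re-run the monotonicity computation of \cref{thm:parallelDY} keeping the term $\sigma\langle\Delta a_{i_0},\Delta x_{i_0}\rangle$, bound it below by $\sigma\phi(\|x_{i_0}^{k+1}-x^*\|)$ on a bounded set containing the iterates and $x^*$, and observe that the square-completion absorbing the $C_i$ still leaves a nonnegative remainder; likewise your propagation step ($\mathcal{M}$-asymptotic regularity forces $x_i^{k+1}-x_0^{k+1}\to0$, so all coordinates inherit the strong limit) is exactly what the paper does. The genuine gap is in how you make $\phi(\|x_{i_0}^{k+1}-x^*\|)$ vanish. You route the uniform-monotonicity surplus through the Fej\'er descent inequality and telescope, obtaining $\sum_k\lambda_k\phi(\|x_{i_0}^{k+1}-x^*\|)<+\infty$. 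This yields $\phi(\|x_{i_0}^{k+1}-x^*\|)\to0$ only when $\inf_k\lambda_k>0$, whereas \cref{thm:parallelDY} (and hence the corollary) only assumes $\theta_k\in\bigl[0,2-\tfrac{\gamma\beta}{2}\bigr]$ with $\sum_k\theta_k\bigl(2-\tfrac{\gamma\beta}{2}-\theta_k\bigr)=+\infty$, which allows $\theta_k\to0$. Your fallback --- subsequences together with the weak limit and the monotone behaviour of $\|w^k-w^*\|$ --- does not close this: summability plus $\sum_k\lambda_k=+\infty$ gives a subsequence along which $x_{i_0}^{k_j+1}\to x^*$ strongly, but strong convergence of one coordinate of the resolvent output does not imply $\|w^{k_j}-w^*\|\to0$ (the map from $w$ to $x_{i_0}$ is far from injective), so the Fej\'er monotonicity of $\|w^k-w^*\|$, which lives in the reduced variable, cannot be activated to cover the whole sequence.

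The repair is to avoid telescoping altogether, which is what the paper does. The surplus you isolated is exactly
\begin{equation*}
G_k:=\langle \mathcal{M}(u^k-\mathcal{T}_\alpha u^k),\,\mathcal{T}_\alpha u^k-u^*\rangle\;\geq\;\sigma\,\phi\bigl(\|x_{i_0}^{k+1}-x^*\|\bigr),
\end{equation*}
since $\mathcal{M}(u^k-\mathcal{T}_\alpha u^k)\in\mathcal{A}_\alpha\mathcal{T}_\alpha u^k$ while $0\in\mathcal{A}_\alpha u^*$. Now $\|\mathcal{M}(u^k-\mathcal{T}_\alpha u^k)\|\leq\|\mathcal{C}\|\,\|u^k-\mathcal{T}_\alpha u^k\|_{\mathcal{M}}\to0$ strongly by the $\mathcal{M}$-asymptotic regularity of \cref{lem:MfejerMon} (this is \cref{ConvergenzaAu}), and $\mathcal{T}_\alpha u^k-u^*$ is bounded because $\mathcal{T}_\alpha u^k\rightharpoonup u^*$. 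Hence $G_k\to0$ as the pairing of a strongly vanishing sequence with a bounded one, so $\phi(\|x_{i_0}^{k+1}-x^*\|)\to0$ for the full sequence, with no restriction on $\{\theta_k\}$ beyond those of \cref{thm:parallelDY}. With this single change the rest of your argument goes through verbatim.
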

\begin{proof}
	Let $u^*$ be in $\zer \mathcal{A}$. Consider the bounded set $S = \{x^*\} \cup \{x_i^k\}_k \subset \dom A_i$. Then, by definition of uniform monotonicity, there exists an increasing function $\phi~:~\mathbb{R}_+ \to [0,+\infty]$ that vanishes only at $0$ such that
	\[ \langle p-p',x-x' \rangle \geq \phi\left(\|x-x'\|\right) \quad \textit{ for all } \ a_i\in A_ix,~ a_i'\in A_ix'.\]
	By definition of $\mathcal{T}_\alpha$ we have $ \mathcal{M}(u^k-\mathcal{T}_\alpha u^k) \in \mathcal{A}_\alpha\mathcal{T}_\alpha u^k $, thus
	\[\langle \mathcal{M}(u^k-\mathcal{T}_\alpha u^k),\mathcal{T}_\alpha u^k-u^*\rangle \geq \sigma \langle a_i^k-a_i^*,x_i^k-x^* \rangle \geq \phi\left(\|x_i^k-x^*\|\right), \]
	with $a_i^k\in A_i x_i^k$, $a_i^*\in A_i x^*$. Using \cref{ConvergenzaAu} and the fact that $\mathcal{T}_\alpha u^k\rightharpoonup u^*$ weakly in $\bH$, the left hand-side vanishes as $k\to +\infty$, thus $x_i^k \to x$ strongly in $\bH$. For all other sequences $\{x_i^k\}_k$, by inspection, we notice that \cref{ConvergenzaAu} also implies that $x_0^k-x_{i}^k \to 0$ for all $i \in \{1,\dots, N\}$. The thesis follows.
\end{proof}

\subsubsection{Sequential generalization of \textsf{FDR}}
The flexibility of the proposed framework allows to easily design new schemes for the $(2N+1)$-operator case that do not rely on the usual product space trick. In particular, we show how to retrieve a generalization of the \textsf{FDR} splitting that, in contrast to the iteration discussed in \cref{ParallelDavisYinSplitting}, admits an intrinsic sequential nature. Of course, setting $C_i=0$ for all $i$, this can be thought as a sequential generalization of \textsf{DRS} as well. Consider the following operators
(confer $\mathcal{C}^*$ below for a precise definition of $\mathcal{M} = \mathcal{C}\mathcal{C}^*$)

\vspace{-0.4cm}
{\footnotesize
	\begin{equation*}\label{SeqDavisYin}
	\arraycolsep=0pt
	\renewcommand{\arraystretch}{0.8}
	\mathcal{A}_{\alpha}=
	\begin{bmatrix}
	\alpha I+\sigma A_0 & -I & -I & & & \\
	I &  & -I & & & \\
	(1-2\alpha) I+\sigma C_1 & I & 2\alpha I+\sigma A_1 \rlap{$\ \ \ \cdots$} & & & \\
	& & \vdots \rlap{\hspace{0.6cm}$\ddots$} & \vdots & &\\
	& & & \llap{$\cdots \ \ \ $} 2\alpha I+\sigma A_{N-1} & -I & -I\\
	& & & I &  & -I \\
	& & & (1-2\alpha)I+\sigma C_N & I & \alpha I+\sigma A_N
	\end{bmatrix},~
	\arraycolsep=1.5pt
	\mathcal{M}=
    \begin{bmatrix}
	I & I & I & & & &\\
	I & I & I & & & &\\
	I & I & 2I & \cdots & & &\\
	& & \vdots & \ddots & \vdots & &\\
	& & & \cdots & 2I & I & I\\
	& & & & I & I & I\\
	& & & & I & I & I
    \end{bmatrix}.
  \end{equation*}%
}%
Also in this case, if $u=(x_0,v_1,x_1,\dots,v_N,x_N) \in \bH:= \mathcal{H}^{2N+1}$ satisfies $0\in\mathcal{A}_\alpha u$ then $x =x_0=\dots =x_N$ solves \cref{sumDY} and a solution $x$ of \cref{sumDY} yields $0\in \mathcal{A}_\alpha u$ setting $x_0=\dots=x_N=x$ and $v_i=(\alpha-1)x-a_i-\sigma C_i x$, for all $i\in\{1,\dots,N\}$, where $a_i\in \sigma A_i x$, $i\in\{0,\dots,N\}$, such that $0=a_0+\sum_{i=1}^N a_i + \sigma C_i x$. The operator $(\mathcal{M}+\mathcal{A}_\alpha)$ is lower-triangular and easy to invert, leading to a Lipschitz continuous inverse. We choose the decomposition of $\mathcal{M}$ (according to \cref{CholeskyDecomposition}) to be $\mathcal{M}=\mathcal{C}\mathcal{C}^*$ with $\mathcal{C}^*:u\mapsto (x_0+v_1+x_1,~\dots~,~x_{N-1}+v_N+x_N)$ giving the reduced variables $w_i = x_{i-1}+v_i+x_i$ for all $i\in\{1,\dots,N\}$, which for the sake of exposition we re-scale again with a factor of $(1+\alpha)^{-1}$, considering $\widetilde{w}_i=w_i/(1+\alpha)$ instead of $w_i$ for $i\in\{1,\dots,N\}$. We put $\gamma = \sigma/(1+\alpha)$. So that, even if we introduced $(2N+1)$ variables, the reduced algorithm would only need to store $N$. Again, the operator $\widetilde{\mathcal{T}}_\alpha=\left(I+\mathcal{C}^*\rhd \mathcal{A}_\alpha\right)^{-1}$ can be computed explicitly and the relaxed reduced algorithm reads as
\begin{equation}\label{eq:SequentialFDR}
\left\{
\begin{aligned}
x_0^{k+1}&=J_{\gamma A_0}\widetilde{w}_1^{k},\\
x_i^{k+1} &=J_{\frac{\gamma}{2}A_i}\bigg(x_{i-1}^{k+1}+\frac{\widetilde{w}_{i+1}^{k}-\widetilde{w}_{i}^{k}}{2}-\frac{\gamma}{2}C_{i}x_{i-1}^{k+1}\bigg) & \textit{ for } i \in \{1,\dots, N-1\},\\
x_N^{k+1}&=J_{\gamma A_N}\left(2x_{N-1}^{k+1}-\widetilde{w}_{N}^k-\gamma C_{N}x_{N-1}^{k+1}\right),\\
\widetilde{w}_i^{k+1}&= \widetilde{w}_i^k+\theta_k(x_{i}^{k+1}-x_{i-1}^{k+1}) & \textit{ for } i \in \{1,\dots, N\}.
\end{aligned}
\right.
\end{equation}

\begin{theorem}\label{thm:SeqDYS}
	Let $\widetilde{w}^k=(\widetilde{w}_1^k,...,\widetilde{w}_N^k)$ for $k \in \mathbb{N}$ be the sequence generated by the sequential \textsf{FDR} scheme \cref{eq:SequentialFDR} with starting point $\widetilde{w}^0\in \mathcal{H}^N$. Let $\gamma\in(0,4\beta)$ and $\{\theta_k\}_k$ satisfy $\theta_k\in\big[0,2-\frac{\gamma\beta}{2}\big]$ and $\sum_k\theta_k\big(2-\frac{\gamma\beta}{2}-\theta_k\big)=+\infty$. If a solution to \cref{sumDY} exists, then $\{\widetilde{w}^k\}_k$ converges weakly to a $\widetilde{w}^*\in \mathcal{H}^N$ such that $x^*:=J_{\gamma A_0}\widetilde{w}_1^*$ is a solution to \cref{sumDY}. Moreover, the sequences $\{x_i^k\}_k$, for $i \in \{0,\dots,N\}$, weakly converge to this solution.
\end{theorem}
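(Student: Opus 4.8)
The plan is to deduce the result from \cref{remark:samesolution} in exactly the same manner as in the proof of \cref{thm:parallelDY}, so that the real content reduces to verifying its hypotheses for the sequential operators $\mathcal{A}_\alpha$ and $\mathcal{M}$: that $\mathcal{A}_\alpha$ is maximal monotone, that $(\mathcal{M}+\mathcal{A}_\alpha)^{-1}$ is Lipschitz, and that the relaxation parameters $\lambda_k=(1+\alpha)\theta_k$ meet the standing requirements $\lambda_k\in[0,2]$ and $\sum_k\lambda_k(2-\lambda_k)=+\infty$. First I would compute $\langle p-p',u-u'\rangle$ for $(u,p),(u',p')\in\mathcal{A}_\alpha$, reading off the contributions block-by-block from the tridiagonal-type structure of $\mathcal{A}_\alpha$.

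The key step, and the only place where the sequential structure genuinely differs from the parallel one, is this monotonicity estimate. Writing $\Delta x_i=x_i-x_i'$, $\Delta a_i=a_i-a_i'$ with $a_i\in A_ix_i$, and $\Delta C_ix_{i-1}=C_ix_{i-1}-C_ix_{i-1}'$, monotonicity of $A_0,\dots,A_N$ lets me discard all the terms $\sigma\langle\Delta a_i,\Delta x_i\rangle\geq 0$. The crucial observation is that the purely quadratic part in $\alpha$, which carries the diagonal weights $\alpha,2\alpha,\dots,2\alpha,\alpha$ on $\Delta x_0,\dots,\Delta x_N$ together with the off-diagonal couplings $-2\alpha\langle\Delta x_{i-1},\Delta x_i\rangle$, telescopes exactly into $\alpha\sum_{i=1}^N\|\Delta x_{i-1}-\Delta x_i\|^2$; here each interior variable carries weight $2\alpha$ precisely because it is coupled to two neighbours in the chain. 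After the same splitting and cocoercivity step as in the case $N=1$, this yields
\[
\langle p-p',u-u'\rangle \ \geq\ \sum_{i=1}^N\Bigl(\alpha\|\Delta x_{i-1}-\Delta x_i\|^2+\sigma\langle\Delta C_ix_{i-1},\Delta x_i-\Delta x_{i-1}\rangle+\tfrac{\sigma}{\beta}\|\Delta C_ix_{i-1}\|^2\Bigr).
\]
Each summand is a quadratic form in the pair $(\Delta x_{i-1}-\Delta x_i,\ \Delta C_ix_{i-1})$, nonnegative as soon as $\alpha\geq\sigma\beta/4$; choosing $\alpha=\sigma\beta/4$ completes every summand to a perfect square, so $\mathcal{A}_\alpha$ is monotone.

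It then remains to translate this choice into the stated ranges and to dispatch the routine parts. In terms of $\gamma=\sigma/(1+\alpha)$ the relation $\alpha=\sigma\beta/4$ reads $\alpha=\frac{\gamma\beta}{4-\gamma\beta}$, which is positive for all $\gamma\in(0,4/\beta)$; since $1+\alpha=4/(4-\gamma\beta)$ one checks $(1+\alpha)(2-\gamma\beta/2)=2$, so $\theta_k\in[0,2-\gamma\beta/2]$ forces $\lambda_k=(1+\alpha)\theta_k\in[0,2]$, and the divergence hypothesis on $\{\theta_k\}_k$ becomes $\sum_k\lambda_k(2-\lambda_k)=+\infty$ after multiplication by the positive constant $(1+\alpha)^2$. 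Maximality of $\mathcal{A}_\alpha$ follows, as in \cref{thm:parallelDY}, by writing it as the sum of the maximal monotone operator $\diag(\sigma A_0,0,\sigma A_1,\dots,0,\sigma A_N)$ and a bounded-linear-plus-Lipschitz perturbation (the skew couplings and the $\beta$-Lipschitz maps $C_i$), invoking \cite[Lemma 2.4]{brezis1973ope}, while the lower-triangular structure of $\mathcal{M}+\mathcal{A}_\alpha$ makes $(\mathcal{M}+\mathcal{A}_\alpha)^{-1}$ Lipschitz by inspection. With all hypotheses in place, \cref{ReductionAndConvergence} gives weak convergence of $\{\widetilde{w}^k\}_k$ to some $\widetilde{w}^*$ with $(\mathcal{M}+\mathcal{A}_\alpha)^{-1}\mathcal{C}(1+\alpha)\widetilde{w}^*\in\zer\mathcal{A}_\alpha$, whence $x^*=J_{\gamma A_0}\widetilde{w}_1^*$ solves \cref{sumDY}; \cref{remark:samesolution} then upgrades this to weak convergence of each $\{x_i^k\}_k$ to $x^*$.

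I expect the main obstacle to be bookkeeping the telescoping identity correctly from the somewhat intricate displayed matrix $\mathcal{A}_\alpha$, in particular confirming that the interior diagonal blocks indeed carry weight $2\alpha$ and that the sign of every off-diagonal coupling is exactly what is needed to form the squares $\|\Delta x_{i-1}-\Delta x_i\|^2$. Once this identity is secured, the remaining arguments are routine and reproduce the $N=1$ analysis of \textsf{FDR} verbatim on each link of the chain.
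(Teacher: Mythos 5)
Your proposal is correct and is precisely the argument the paper intends: the paper omits this proof with the remark that it is ``similar to \cref{thm:parallelDY}'', and your write-up carries out exactly that adaptation --- the telescoping identity $\alpha\sum_{i=1}^N\|\Delta x_{i-1}-\Delta x_i\|^2$ (with interior weights $2\alpha$), the per-link cocoercivity estimate giving monotonicity at $\alpha=\sigma\beta/4$, the parameter translation via $(1+\alpha)(2-\gamma\beta/2)=2$, and the appeal to \cref{ReductionAndConvergence} and \cref{remark:samesolution} all match the parallel-case proof. Note in passing that your condition $\gamma\in(0,4/\beta)$ is the correct one; the statement's ``$\gamma\in(0,4\beta)$'' is a typo inherited from the paper.
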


The proof is similar to \cref{thm:parallelDY} and therefore omitted.

\begin{corollary}[Strong convergence]
	If there exists $i \in \{0,\dots, N\}$ such that $A_i$ is uniformly monotone on every bounded set of $\dom A_i$, then for all $i \in \{0,\dots, N\}$ the sequences $\{x_i^k\}_{k}$ generated by \cref{eq:SequentialFDR} converge strongly to a solution $x^*$ of \cref{sumDY}.
\end{corollary}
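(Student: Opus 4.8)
The plan is to follow the blueprint of \cref{prop:strongconvergenceParallelFDR}, adapting the monotonicity bookkeeping to the sequential coupling of \cref{eq:SequentialFDR}. First I would fix a zero $u^*=(x^*,v_1^*,x^*,\dots,v_N^*,x^*)\in\zer\mathcal{A}_\alpha$ associated with the solution $x^*$ of \cref{sumDY}, and recall from \cref{thm:SeqDYS} that the underlying \textsf{PPP} sequence satisfies $\mathcal{T}_\alpha u^k\rightharpoonup u^*$ weakly in $\bH$, so that each component sequence $\{x_j^k\}_k$ is bounded and converges weakly to $x^*$. Writing $y^k:=\mathcal{T}_\alpha u^k$ and $p^k:=\mathcal{M}(u^k-y^k)\in\mathcal{A}_\alpha y^k$, the asymptotic regularity of \cref{lem:MfejerMon} together with the boundedness of $\mathcal{C}$ gives $\|p^k\|\le\|\mathcal{C}\|\,\|u^k-y^k\|_\mathcal{M}\to0$, exactly as encoded in \cref{ConvergenzaAu}. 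Since $y^k-u^*\rightharpoonup 0$ is bounded, this yields $\langle p^k,y^k-u^*\rangle\to0$.

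The core step is to bound $\langle p^k,y^k-u^*\rangle$ from below by a multiple of the uniform-monotonicity modulus. Using $p^k\in\mathcal{A}_\alpha y^k$ and $0\in\mathcal{A}_\alpha u^*$, I would expand $\langle p^k,y^k-u^*\rangle=\langle\mathcal{A}_\alpha y^k-\mathcal{A}_\alpha u^*,y^k-u^*\rangle$ exactly as in the monotonicity computation underlying \cref{thm:SeqDYS}. That computation writes the expression as a sum of manifestly nonnegative contributions — the $\alpha$-weighted squared consecutive differences $\|\Delta x_{j-1}-\Delta x_j\|^2$ combined with the cocoercivity terms $\sigma\langle\Delta C_jx_{j-1},\Delta x_j-\Delta x_{j-1}\rangle+\tfrac{\sigma}{\beta}\|\Delta C_jx_{j-1}\|^2$, made nonnegative by the choice $\alpha=\tfrac{\beta\sigma}{4}$ — plus the monotone terms $\sigma\langle\Delta a_j,\Delta x_j\rangle\ge0$, one for each operator $A_j$. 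For the distinguished index $i$ for which $A_i$ is uniformly monotone on bounded sets, I would keep its term and discard all the others (all nonnegative): on the bounded set $S=\{x^*\}\cup\{x_i^k\}_k\subset\dom A_i$ uniform monotonicity supplies an increasing modulus $\phi$, vanishing only at $0$, with $\langle\Delta a_i,\Delta x_i\rangle\ge\phi(\|x_i^k-x^*\|)$, whence $\langle p^k,y^k-u^*\rangle\ge\sigma\,\phi(\|x_i^k-x^*\|)\ge0$.

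Combining the two limits forces $\phi(\|x_i^k-x^*\|)\to0$, and the properties of $\phi$ give $x_i^k\to x^*$ strongly. To propagate strong convergence to every component I would read off the $v_j$-rows of $\mathcal{A}_\alpha$: each such row acts on $y^k$ as $x_{j-1}^{k+1}-x_j^{k+1}$, which is the corresponding (single-valued) component of $p^k=\mathcal{M}(u^k-y^k)\to0$, so that $x_{j-1}^k-x_j^k\to0$ strongly for all $j\in\{1,\dots,N\}$ by \cref{ConvergenzaAu}. A telescoping argument starting from the one index already known to converge strongly then yields $x_j^k\to x^*$ strongly for all $j\in\{0,\dots,N\}$. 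The main obstacle is purely organisational: one must carry the exact decomposition from the (omitted) proof of \cref{thm:SeqDYS} and verify that isolating the single term $\sigma\langle\Delta a_i,\Delta x_i\rangle$ leaves a nonnegative remainder under the sequential structure; once this bookkeeping is in place, the squeeze and the telescoping are immediate.
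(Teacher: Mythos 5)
Your proposal is correct and takes essentially the same route as the paper: the paper's proof simply defers to \cref{prop:strongconvergenceParallelFDR}, whose argument you reproduce — the uniform-monotonicity lower bound $\langle \mathcal{M}(u^k-\mathcal{T}_\alpha u^k), \mathcal{T}_\alpha u^k - u^*\rangle \geq \sigma\,\phi\left(\|x_i^k-x^*\|\right)$, obtained by discarding the remaining nonnegative terms of the monotonicity decomposition, squeezed against \cref{ConvergenzaAu}. Your only addition — reading the $v_j$-rows of $\mathcal{A}_\alpha$ to get $x_{j-1}^k-x_j^k\to 0$ and telescoping from the distinguished index $i$ — is precisely the adaptation the paper leaves implicit in ``follows the lines of the parallel case.''
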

Again, the proof follows the lines of the parallel case, see \cref{prop:strongconvergenceParallelFDR}. 

\subsection{Numerical experiments}\label{sec:Experiment}

In this section, we show the numerical performance of the algorithms discussed in \cref{sec:applications}, in particular, we will focus on the novel sequential generalization of \textsf{FDR} and its parallel counter-variant. We do not claim to out-perform state-of-art optimization methods. We only show that indeed, the sequential \textsf{FDR} reaches comparable performance when the computations are performed on a single \textsf{CPU} and the number of addends is small. For all the methods, the parameters were roughly tuned for best performance. 

\textit{Problem formulation.} A classical benchmark for testing optimization algorithms, already considered, for instance, in \cite{Ryu}, is the so-called \emph{Markowitz portfolio optimization problem}. Taking into account transaction costs we have a problem of the form
\begin{equation*}
\min_{w \in \Delta} \ w^T \Sigma w - r^T w +\frac{\delta}{2}\|w\|^2 + \sum_{i=1}^n|w_i-(w_0)_i| + \sum_{i=1}^n |w_i-(w_0)_i|^{3/2},
\end{equation*}
where $\delta > 0$, $r \in \mathbb{R}^n$ is a vector of estimated assets returns, $\Sigma$ is the estimated covariance matrix of returns (which is a symmetric positive semi-definite $n\times n$ matrix), $\Delta\subset \mathbb{R}^n$ is the standard simplex and $w_0 \in \mathbb{R}^n$ is the initial position. The term $\phi(w)= \sum_{i=1}^n|w_i-(w_0)_i|  + \sum_{i=1}^n |w_i-(w_0)_i|^{3/2}$ is a standard penalization for financial transactions \cite{BoydPortfolio} and $(\delta/2)\|w\|^2$ is motivated from a robust-optimization viewpoint \cite{Ho2015WeightedEN}. 

\textit{Optimization procedure.} We apply the sequential and the parallel generalizations of \textsf{FDR} to this problem splitting the objective function into the following terms
\begin{equation*}
\min_{w \in \mathbb{R}^n} \ \underbrace{w^T \Sigma w - r^T w +\frac{\delta}{2}\|w\|^2}_{f(w)} + \underbrace{\sum_{i=1}^n|w_i-(w_0)_i|}_{g_0(w)} + \underbrace{\sum_{i=1}^n |w_i-(w_0)_i|^{3/2} }_{g_1(w)} +\underbrace{\mathbb{I}_\Delta (w)}_{g_2(w)}.
\end{equation*}
For the sake of completeness we will also split the regular term $f$ into the two parts $f_1(w)=w^T \Sigma w - r^T w$ and $f_2(w)=(\delta/2)\|w\|^2$. We denote by $L$ the largest eigenvalue of $\Sigma$, i.e.,~the Lipschitz constant of $w \mapsto \Sigma w$. We consider the following three versions of sequential \textsf{FDR} applied to $0\in (A_0+A_1+A_2+C_1+C_2)x $ with $A_i=\partial g_i$ for $i=0,1,2$
\begin{enumerate}
	\item  \textsf{SeqFDRv1}: we split the regular term, meaning that $C_1=\nabla f_1$ and $C_2=\nabla f_2$. Thus, we may set $\gamma = \min\{1/L, 1/\delta \}$ and $\theta_k = 1$ for all $k \in \mathbb{N}$.
	\item \textsf{SeqFDRv2}: we do not split the regular term but consider $C_1 = \nabla f$ and $C_2=0$. Thus, we may set $\gamma = 1/(L+\delta)$  and $\theta_k = 1$ for all $k \in \mathbb{N}$.
	\item \textsf{SeqFDRv3}: we repeat two times the regular term, i.e.,~$C_1 = (1/2)\nabla f$ and $C_2=(1/2)\nabla f$. Thus, we may set $\gamma = 2/(L+\delta)$ and $\theta_k = 1$ for all $k \in \mathbb{N}$. 
\end{enumerate}

\smallskip

Their convergence behaviour, in particular the differences, are shown in Figure \ref{fig:PortOpt}. We select the best version, in this case \textsf{SeqFDRv3}, and we compare its behaviour with the following algorithms. The generalized Backward-Forward (\textsf{GenBF}) in \cite{raguet:GenFB}, which is actually an instance of the parallel \textsf{FDR} applied to a larger problem with four non-regular terms: $\widetilde{A_0}=0$ and $\widetilde{A_1}=A_0, \ \widetilde{A_2}=A_1, \ \widetilde{A_3}=A_2$, hence ending up with three variables. The parallel \textsf{FDR} with non-split regular term (\textsf{ParFDR}), with the parallel \textsf{DR} (\textsf{ParDR}) introducing $g_3=f$ and computing its proximal operator (having thus one additional variable), and eventually with \textsf{PPXA} \cite{PPXA}, which, as \textsf{ParDR}, requires three variables. Since our algorithms and \textsf{ParFDR} have the lesser number of variables to store and the costs per iterations of all methods are similar, we show the performance only in terms of iterations.

\begin{figure}[t!]
	\centering
	\includegraphics[width=1\linewidth]{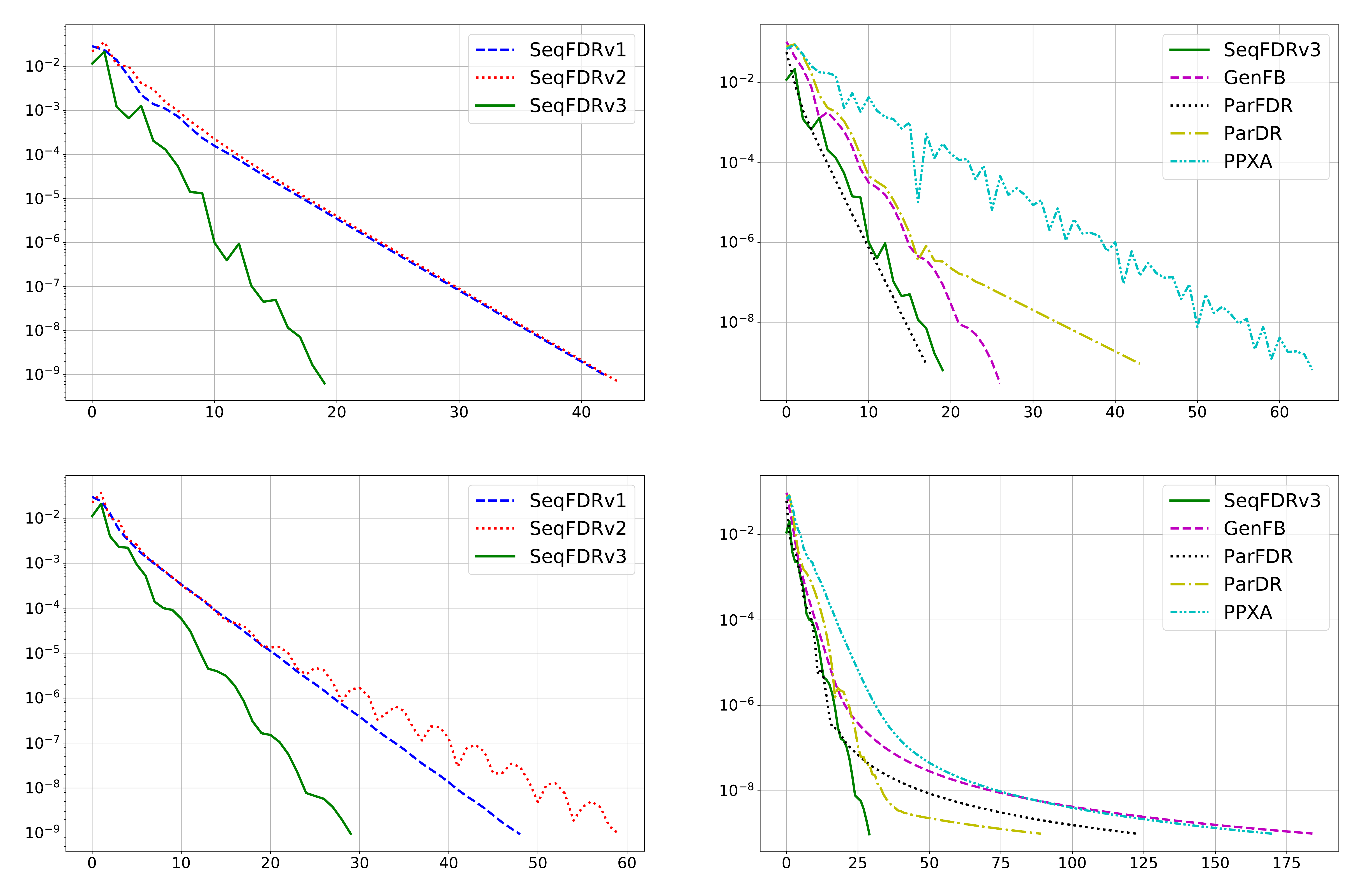}
	\caption{Norm distance to the minimizer as a function of the number of iterations. The estimated returns $r$ and the matrix $\Sigma$ are derived from real data\protect\footnote{Blah} considering 200 market days and $n=53$ assets. We show two cases: (above) the initial state $w_0$ has been chosen randomly and (below) $w_0$ is the output of the first one considered on the same optimization problem but 20 days later.}
	\label{fig:PortOpt}
\end{figure}
\footnotetext{Downloaded from \url{https://stanford.edu/class/engr108/portfolio.html} (accessed on March 27, 2021).}

\paragraph{Comments}  Figure \ref{fig:PortOpt} shows that \textsf{SeqFDRv3} performs as well as the other algorithms and reaches high precision within a  few iterations. In the literature, there are no essential guidelines in splitting the forward term, indeed, it is not even considered as a possibility in \cite{raguet:GenFB, condat:hal-00609728}. However, we believe that is of interest when the computation of the full gradient is computationally expensive, so that splitting it into two parts or computing it only once (instead of $N$ times) at each iteration would result in more efficient updates. During the experiments, since the projections onto affine sets are much easier to compute than the projection onto the simplex which in practice takes $\mathcal{O}(n)$ time, we also split the simplex constraint into its affine components $\mathbb{I}_\Delta(w) = \mathbb{I}\{w: \ w_1+\dots + w_n=1\}+ \mathbb{I}\{w : \ w_i \geq 0  \ \text{for all} \ i \}$ at the cost of one additional variable. However, in our small-scale problem, splitting the simplex constraint behaves quite badly and therefore, we omitted it.

\section{Conclusions}

In this paper, we studied preconditioned proximal point algorithms with a particular focus on the degenerate case. We established weak and strong convergence results and proposed a new perspective on well-known algorithms that directly yields sharp conditions for the parameters. Moreover, an intuitive way to generalize those algorithms to tackle the sum of many operators is shown. In the future, we plan to: (i) better understand the limitations we found for the parameters' freedom regarding the convergence of $\{u^k\}_k$ in \cref{WeakConvergencePPPnew}; (ii) study a full characterisation \textit{à la Minty} of the requirements we are implicitly imposing on $\mathcal{A}$ with assumptions \cref{HpAdmissiblePrec}; (iii) analyze overrelaxed methods, such as the overrelaxed Forward Peaceman-Rachford method mentioned in \cref{rem:overrelaxedFDR}, and their sequential and parallel extensions.

\section*{Appendix}

\begin{proof}[Proof of \cref{CholeskyDecomposition}]
	In finite dimension, i.e.,~for $\bH=\mathbb{R}^d$, one shows via spectral decomposition, that every linear positive semi-definite operator admits such a decomposition, and, moreover, $\bD$ can be chosen as $\mathbb{R}^{d-k}$, where $k=\dim \left(\ker \mathcal{M}\right)$. One way to construct such a decomposition in infinite dimension is as follows: since $\mathcal{M}$ is self-adjoint positive semi-definite then $\mathcal{M}$ admits an (actually unique) self-adjoint square root (see~\cite[Chapter VII]{riesz2012functional}, or  \cite[Theorem 4]{bernau_1968}) which means that there exists a bounded self-adjoint operator $\mathcal{E}:\bH\to\bH$ such that $\mathcal{M}=\mathcal{E}^2$. Let us denote by $\bD=\overline{\Image\mathcal{E}}$, which is a real Hilbert space. Denoted by $i:\bD \to \bH$ the inclusion operator, define $\mathcal{C}:=\mathcal{E} i$. We have that $\mathcal{C}$ is injective, in fact
	\begin{equation*}
		\ker \mathcal{C}=\bD\cap \ker \mathcal{E}=\overline{\Image\mathcal{E}}\cap \ker \mathcal{E}=\left(\ker \mathcal{E}\right)^\perp\cap \ker \mathcal{E}=\{0\}.
	\end{equation*}
	Moreover, since $i i^*d=d$ for all $d \in \bD\subset \bH$, it follows that $\mathcal{C}\mathcal{C}^*=\mathcal{E} i i^* \mathcal{E}^*=\mathcal{E}\mathcal{E}^*=\mathcal{E}\mathcal{E}=\mathcal{M}$. Notice that the image of $\mathcal{E}$ is dense in $\bD$, hence, since $i^*d=d$ for all $d \in \bD$, we have $\Image\mathcal{E}=\Image\mathcal{C}^*$ and the image of $\mathcal{C}^*$ is dense in $\bD$ and $\mathcal{C}^*$ would be onto if it has closed range. We prove by contradiction that this is the case when $\mathcal{M}$ has closed range. Let $\{x^k\}_k$ be a sequence in $\bH$ such that $\mathcal{C}^*x^k \to y$ with $y\in \bD \setminus \Image\mathcal{C}^*$, then $\mathcal{M}x^k=\mathcal{C}\mathcal{C}^*x^k\to \mathcal{C}y$. Since $\mathcal{M}$ has closed range there exists an $x\in \bH$ such that $\mathcal{C}y=\mathcal{M}x=\mathcal{C}\mathcal{C}^*x$ so that $\mathcal{C}(y-\mathcal{C}^*x)=0$ and $y=\mathcal{C}^*x$ by injectivity of $\mathcal{C}$, which yields a contradiction.
\end{proof}

\begin{proof}[Proof of \cref{prop:MMaximalMonotonicityProp}]
	We start by proving that if $\mathcal{M}^{-1}\mathcal{A}$ is $\mathcal{M}$-monotone, then $\mathcal{A}\cap (\bH \times \Image\mathcal{M})$ is monotone. Let $(u,v), \ (u',v')\in \mathcal{A}\cap (\bH \times \Image\mathcal{M})$, then by definition there exist $\widetilde{v}, \ \widetilde{v}'$ such that $(u,v)=(u, \mathcal{M}\widetilde{v})$ and $(u',v')=(u', \mathcal{M}\widetilde{v}')$, thus, since $(u,\widetilde{v}), \ (u',\widetilde{v}') \in \mathcal{M}^{-1}\mathcal{A}$
	\begin{equation*}
		\langle v-v',u-u'\rangle = \langle\mathcal{M}\widetilde{v}-\mathcal{M}\widetilde{v}',u-u'\rangle = \langle \widetilde{v}-\widetilde{v}',u-u'\rangle_\mathcal{M} \geq 0,
	\end{equation*}
	by the $\mathcal{M}$-monotonicity of $\mathcal{M}^{-1}\mathcal{A}$. Conversely, given $(u,\widetilde{v}), \ (u',\widetilde{v}') \in \mathcal{M}^{-1}\mathcal{A}$, since $(u,v):=(u, \mathcal{M}\widetilde{v})$ and $(u',v'):=(u', \mathcal{M}\widetilde{v}')$ belong to $\mathcal{A}\cap (\bH \times \Image\mathcal{M})$ we have
	\begin{align*}
		\langle \widetilde{v}-\widetilde{v}',u-u'\rangle_\mathcal{M}  = \langle\mathcal{M}\widetilde{v}-\mathcal{M}\widetilde{v}',u-u'\rangle =\langle v-v',u-u'\rangle \geq 0.
	\end{align*}
\end{proof}

\begin{proof}[Proof of \cref{thm:DRS-strongconvergence}]
	Our goal is to prove that there exists a constant $\alpha>0$ such that \cref{eq:AtildeCocoercive2} holds. Let $u=(x,y), \ u'=(x',y')$ and  $(u,\xi),(u',\xi')\in\mathcal{A}\cap( \bH\times\Image \mathcal{M})$, then 
	\begin{equation*}
		\xi\in \mathcal{A}u=\begin{pmatrix}
			\sigma Ax+y\\
			-x+(\sigma B)^{-1}y
		\end{pmatrix},\quad \xi'\in \mathcal{A}u'=\begin{pmatrix}
			\sigma Ax'+y'\\
			-x'+(\sigma B)^{-1}y'
		\end{pmatrix}.
	\end{equation*}
	This means that there exists $a\in \sigma Ax, \ a'\in \sigma Ax'$ and $b\in(\sigma B)^{-1}y, b'\in(\sigma B)^{-1}y'$ such that $\xi_1=a+y, \ \xi_2=-x+b$ and $\xi_1'=a'+y',\ \xi_2'=-x'+b'$. Moreover, since $\xi=(\xi_1,\xi_2),~\xi'=(\xi_1',\xi_2') \in \Image\mathcal{M}$, we have $\xi_1=-\xi_2$ and $\xi_1'=-\xi_2'$. Together, $a+y=x-b$, $a'+y'=x'-b'$, such that
	\begin{equation}\label{eq:SostituzioneDRperDelta}
		\Delta a + \Delta y=\Delta x - \Delta b,       
	\end{equation}
	where $\Delta a=a-a', \ \Delta b=b-b', \ \Delta x=x-x'$ and $\Delta y = y-y'$.
	
	Now we prove all claimed cases by checking the strong $\mathcal{M}$-monotonicity:
	
	\noindent\textbf{Case 1a)} $A$ $\mu$-strongly monotone and $B$ $1/\beta$-cocoercive. Note that $\sigma A$ is $\sigma \mu$-strongly monotone and $(\sigma B)^{-1}$ is $\frac{1}{\sigma \beta}$-strongly monotone. With the notations introduced above we have that
	\begin{align*}
		\langle \xi-\xi', u-u'\rangle & =\langle \Delta a +\Delta y, \Delta x\rangle+\langle -\Delta x + \Delta b, \Delta y\rangle= \langle \Delta a,\Delta x\rangle + \langle \Delta b,\Delta y\rangle\\
		&\geq \sigma \mu \|\Delta x\|^2 + \frac{1}{\sigma \beta}\|\Delta y\|^2 \geq \min\bigg\{\sigma \mu,\frac{1}{\sigma \beta}\bigg\}(\|\Delta x\|^2+\|\Delta y\|^2)\\ &\geq \min\bigg\{\frac{\sigma\mu}{2},\frac{1}{2\sigma \beta}\bigg\}\|\Delta x-\Delta y\|^2=\alpha \|C^*\Delta u\|^2=\alpha\|\Delta u\|^2_{\mathcal{M}},
	\end{align*}
	where $\Delta u= u-u'$. This establishes \cref{eq:AtildeCocoercive2} and gives us a convergence rate of $1/(1+\alpha)$ with $\alpha=\min\{\frac{\sigma \mu}{2},\frac{1}{2\sigma \beta}\}$.         
	
	\noindent \textbf{Case 1b)} $B$ $\mu$-strongly monotone and $A$ $1/\beta$-cocoercive. Here, $\sigma A$ is  $\frac{1}{\sigma \beta}$-strongly monotone and $(\sigma B)^{-1}$ is $\sigma \mu$-strongly monotone. We have
	\begin{align*}
		\langle \xi-\xi', u-u'\rangle & = \langle \Delta a,\Delta x\rangle + \langle \Delta b,\Delta y\rangle\\
		&\geq \frac{1}{\sigma \beta} \|\Delta a\|^2 + \sigma \mu \|\Delta b\|^2 \geq \min\biggl\{\sigma \mu,\frac{1}{\sigma \beta}\biggr\}(\|\Delta a\|^2+\|\Delta b\|^2)\\ &\geq \min\bigg\{\frac{\sigma \mu}{2},\frac{1}{2\sigma \beta}\bigg\}\|\Delta a+\Delta b\|^2.
	\end{align*}
	Finally, making use of \cref{eq:SostituzioneDRperDelta}, we obtain \[ 
	\langle  \xi-\xi', u-u'\rangle
	\geq \min\bigg\{\frac{\sigma \mu}{2},\frac{1}{2\sigma \beta}\bigg\}\|\Delta x-\Delta y\|^2 =\alpha \|C^*\Delta u\|^2=\alpha\|\Delta u\|^2_{\mathcal{M}}. \]
	
	\noindent \textbf{Case 2a)} $A$ $\mu$-strongly monotone and $1/\beta$-cocoercive. Using monotonicity of $B$, $\sigma \mu$-strong monotonicity, $\frac{1}{\sigma \mu}$-cocoercivity of $A$ and \cref{eq:SostituzioneDRperDelta}, we have, introducing $t\in (0,1)$, that
	\begin{align*}
		\langle & \xi-\xi', u-u'\rangle=\langle \Delta a,\Delta x\rangle +\langle \Delta b,\Delta y\rangle \geq (1-t)\langle \Delta a,\Delta x\rangle+t\langle \Delta a,\Delta x\rangle\\
		&\geq (1-t)\sigma\mu \|\Delta x\|^2+\frac{t}{\sigma\beta}\|\Delta a\|^2 = (1-t)\sigma\mu \| \Delta x \|^2+\frac{t}{\sigma\beta}\|\Delta x-\Delta y - \Delta b\|^2\\
		&=(1-t)\sigma\mu \|\Delta x \|^2+\frac{t}{\sigma \beta} \|\Delta x-\Delta y\|^2+\frac{t}{\sigma \beta} \|\Delta b\|^2-\frac{2t}{\sigma \beta} \langle \Delta x-\Delta y, \Delta b\rangle\\
		&=(1-t)\sigma\mu \|\Delta x \|^2-\frac{2t}{\sigma \beta} \langle \Delta x, \Delta b\rangle +\frac{t}{\sigma \beta} \|\Delta b\|^2+\frac{t}{\sigma \beta} \|\Delta x-\Delta y\|^2+\frac{2t}{\sigma \beta} \langle \Delta y, \Delta b\rangle.
	\end{align*}
	Choosing $t:=(1+(\sigma^2\mu\beta)^{-1})^{-1}\in (0,1)$ we see that $(1-t)\sigma\mu=t/(\sigma\beta)$. Thus, the first three terms of the latter expression together yield a non-negative term. Therefore, using that $\langle \Delta b, \Delta y\rangle\geq 0$ we get
	\[
	\langle \xi-\xi',u-u'\rangle \geq \frac{t}{\sigma \beta} \|\Delta x -\Delta y\|^2 =\frac{t}{\sigma \beta} \|u-u'\|_\mathcal{M}^2=\frac{\sigma \mu}{\sigma^2\mu\beta+1}\|u-u'\|_\mathcal{M}^2.
	\]
	\noindent \textbf{Case 2b)} $B$ $\mu$-strongly monotone and $1/\beta$-cocoercive. Using monotonicity of $A$, $\sigma \mu$-strong monotonicity, $\frac{1}{\sigma \mu}$-cocoercivity of $B$ and \cref{eq:SostituzioneDRperDelta}, we have, introducing $t\in (0,1)$, that
	\begin{align*}
		&\langle \xi-\xi', u- u'\rangle=\langle \Delta a,\Delta x\rangle+\langle \Delta b,\Delta y\rangle\geq \langle \Delta a,\Delta x\rangle+ (1-t)\langle \Delta b,\Delta y\rangle+t\langle \Delta b,\Delta y\rangle\\
		&\geq \langle \Delta a,\Delta x\rangle+\frac{1-t}{\sigma \beta} \|\Delta y\|^2+t\sigma \mu\|\Delta b\|^2\\
		&=\langle \Delta a,\Delta x\rangle+\frac{1-t}{\sigma \beta} \|\Delta y\|^2+t\sigma \mu\|\Delta x -\Delta a-\Delta y\|^2\\
		&=\langle \Delta a,\Delta x\rangle+\frac{1-t}{\sigma \beta} \|\Delta y\|^2+t\sigma \mu \|\Delta x-\Delta y\|^2+t\sigma \mu \|\Delta a\|^2-2t\sigma \mu \langle \Delta x-\Delta y, \Delta a\rangle\\	&=\frac{1-t}{\sigma \beta} \|\Delta y\|^2+2t\sigma \mu \langle \Delta y, \Delta a\rangle+t\sigma \mu \|\Delta a\|^2+\text{{\footnotesize $(1-2t\sigma \mu)$}} \langle \Delta a, \Delta x\rangle+t\sigma \mu \|\Delta x-\Delta y\|^2.
	\end{align*}
	Choosing $t:=(1+\sigma^2\mu\beta)^{-1}\in (0,1)$ yields $t\sigma\mu=(1-t)/(\sigma\beta)$. Thus, the first three terms of the latter expression together yield a non-negative term. Moreover, using the obvious relation $\mu\leq \beta$ we have $(1-\frac{2\sigma\mu}{1+\sigma^2\mu\beta})\geq 0$, so we can use $\langle \Delta a, \Delta x\rangle\geq 0$ to obtain
	\begin{align*}
		\langle \xi-\xi',u-u'\rangle&\geq t\sigma \mu \|\Delta x -\Delta y\|^2 =t\sigma \mu \|u-u'\|_\mathcal{M}^2=\frac{\sigma \mu}{\sigma^2\mu\beta+1}\|u-u'\|_\mathcal{M}^2.
	\end{align*}
	
	\noindent \textbf{Case 3)} $A$ $\mu$-strongly monotone and $\beta$-Lipschitz continuous or vice-versa. It is easy to see that if an operator is $\mu$-strongly monotone and $\beta$-Lipschitz then it is also $\mu/\beta^2$ cocoercive. Therefore, we can apply case 2 and obtain the rate $1/(1+\alpha)$ where $\alpha=\frac{\sigma\mu}{\sigma^2\beta^2+1}$.
\end{proof}

\bibliographystyle{siamplain}
\bibliography{references}
\end{document}